\theoremstyle{plain}
\newtheorem{teo}{Theorem}[section]
\newtheorem{cor}[teo]{Corollary}
\newtheorem{prop}[teo]{Proposition}
\newtheorem{lem}[teo]{Lemma}
\newtheorem{claim}[teo]{Claim}
\newtheorem{rem}[teo]{Remark}
\newtheorem{Q}{Question}
\theoremstyle{defin}
\newtheorem{defin}[teo]{Definition}
\newcommand{\system}[1]{\mbox{\fontfamily{cmss}\fontshape{n}\fontseries{m}\selectfont#1}}
\newcommand{\ZFC}{\system{ZFC}}
\newcommand{\AC}{\system{AC}}
\newcommand{\GCH}{\system{GCH}}
\newcommand{\DC}{\system{DC}}
\DeclareMathOperator{\cof}{cof}
\DeclareMathOperator{\crt}{crt}
\DeclareMathOperator{\Ord}{Ord}
\DeclareMathOperator{\Ult}{Ult}
\DeclareMathOperator{\ot}{ot}
\DeclareMathOperator{\dom}{dom}
\DeclareMathOperator{\ran}{ran}
\DeclareMathOperator{\lh}{lh}
\DeclareMathOperator{\mc}{mc}
\DeclareMathOperator{\supp}{supp}
\DeclareMathOperator{\Suc}{Suc}
\DeclareMathOperator{\bas}{bas}
\DeclareMathOperator{\Lev}{Lev}
\DeclareMathOperator{\Sacks}{Sacks}
\title{A general tool for consistency results related to I1}
\author{Vincenzo Dimonte\footnote{Technische Universit\"{a}t Wien, Wiedner Hauptstra\ss e 8--10, 1040 Wien, Austria \emph{E-mail address:} \texttt{vincenzo.dimonte@gmail.com},\emph{URL:}  \texttt{http://www.logic.univie.ac.at/\~{}dimonte/}}, Liuzhen Wu\footnote{Institute of Mathematics, Chinese Academy of Sciences, East Zhong Guan Cun Road No 55, Beijing 100190 China \emph{E-mail address:} \texttt{lzwu@math.ac.cn}}}
\begin{document}

\maketitle

\begin{abstract}

In this paper we provide a general tool to prove the consistency of $I1(\lambda)$ with various combinatorial properties at $\lambda$ typical at settings with $2^\lambda>\lambda^+$, that does not need a profound knowledge of the forcing notions involved. Examples of such properties are the first failure of GCH, a very good scale and the negation of the approachability property, or the tree property at $\lambda^+$ and $\lambda^{++}$.

\emph{Keywords}: Infinite combinatorics, rank-into-rank, Prikry forcing, singular cardinal hypothesis.

\emph{2010 Mathematics Subject Classifications}: 03E55, 03E05, 03E35(03E45)

\end{abstract}

 \section{Introduction}
 
 While Cantor gave us the means to conceive infinite cardinals, it is clear that to use them in a fruitful way a thorough study of their structure is needed, and this is the aim of the field usually called ``infinite combinatorics''. The key turning point for this study, as old as Set Theory, was the introduction of forcing \cite{Cohen}: it was clear then that much of the structural properties are independent from ZFC, therefore shifting the focus of the study from what is the structure of infinite cardinals, to what it could be. In the few years after Cohen's seminal results, the analysis of regular cardinals was pretty much complete, with the definition of many forcings that can change effortlessy the combinatorial properties not only of a single cardinal, but, using a method introduced by Easton \cite{Easton}, to all regular cardinals at once.

Changing combinatorial properties of singular cardinals, especially those of cofinality $\omega$, proved to be much harder than in the regular cardinal case. As a result, the research on such properties is rich and varied, it provided and still provides many challenges. Some situations are even impossible: Silver \cite{Silver} proved that SCH cannot fail first at a singular cardinal of uncountable cofinality, and Solovay \cite{Solovay} proved that above a strongly compact cardinal SCH must hold. The typical way to make SCH fail at a singular cardinal (i.e. blowing up the cardinality of its powerset) is to start with $\kappa$ measurable, blowing up its power and then adding an $\omega$-sequence cofinal to $\kappa$ with Prikry forcing, to make it of cofinality $\omega$. But $2^\kappa>\kappa^+$ permits a multitude of properties to hold, and it is an ongoing research to find more and more sophisticated variations of the Prikry forcing that permit different combinations of specific combinatorics on a singular 
cardinal.
 
 While this research heavily involves large cardinals, their role has almost always been giving consistency strength to a certain scenery, but they rarely appear directly with the desired combinatorial property, for the simple reason that the great majority of large cardinals are regular cardinals, therefore unrelated to the problem. Moreover some large cardinal simply do not accept a lot of variety on the structure of singular cardinals, as noted above. Going up the hierarchy, however, one can find an exception. The strongest large cardinal axioms, called rank-into-rank axioms, do involve a singular cardinal of countable cofinality, and it is very natural to question the position of them in this field: as they imply all known large cardinal hypothesis, knowing their structure is crucial in the large cardinal field, as it trickle down to all the hierarchy.  
 
 Woodin in \cite{Woodin} introduced what he called ``Generic Absoluteness for I0'', and this proved to be key for this study: starting with I0, adding a Prikry sequence to its critical point $\kappa$, an action that we noted typical for proving consistency results for singular cardinals, adds in fact I1. In \cite{DimFried} this was exploited to prove that it was possible to have $j:V_{\lambda+1}\prec V_{\lambda+1}$ and $2^\lambda>\lambda^+$ at the same time, in the same way it was proved just with a measurable.
 
 If the proof of $2^\lambda>\lambda^+$ uses Prikry forcing, could we use the sophisticated variations of the Prikry forcing that appear in literature to prove the consistency of different combinations of specific combinatorics on a singular cardinal with I1? In this paper, we extend such theorem, describing a general procedure to be applied to many of the refined Prikry forcings, therefore automatically transferring the already known results about the combinatorics of singular cardinal of countable cofinality to cardinals that moreover satisfy very large cardinal properties, providing therefore a number of new results and a tool that any researcher can use without going into the original details of the forcing notions involved.
 
 In Section 2 all the preliminary facts are collected. In Section 3, the general procedure is described: the notion of $\kappa$-geometric forcing is introduced, and this is the key notion that will permit the proof to work; the procedure is tested with Prikry forcing and tree Prikry forcing. In Section 4, the procedure is applied to the extender-based Prikry forcing, to provide $I1(\lambda)$ and the first failure of GCH at $\lambda$ (this answers a question in \cite{DimFried}). In Section 5, the procedure is applied to two different flavors of diagonal supercompact Prikry forcing, to achieve results on pcf combinatorics and the Tree Property. In Section 6, we see another application to the Tree Property. In Section 7 we note some possible directions for future research on the subject. 
 
 The first author would like to to thank the FWF (Austrian Science Fund) for its generous support through project M 1514-N25, and the kind hospitality of the Kurt G\"{o}del Research Center, Beijing Normal University and the Chinese Academy of Sciences. The second author would like to acknowledge the support through the funding projects NSFC 11321101 and 11401567.

 \section{Preliminaries}
 
 To avoid confusion or misunderstandings, all notation and standard basic results are collected here.

 Elementary embeddings have a key role in the definitions of all large cardinals from measurable to above.
 
 If $M$ and $N$ are sets or classes, $j:M\prec N$ denotes that $j$ is an elementary embedding from $M$ to $N$. We write the case in which the elementary embedding is the identity, i.e., $M$ is an elementary submodel of $N$, simply as $M\prec N$, while when $j$ is indicated we always suppose that it is not the identity.

 If $j:M\prec N$ and either $M\vDash\AC$ or $N\subseteq M$ then it moves at least one ordinal. The \emph{critical point}, $\crt(j)$, is the least ordinal moved by $j$.
 
 If $j:M\prec N$ and $N\subseteq M$, we define $j^n$ as the composition of $n$ copies of $j$, i.e., $j^1=j$ and $j^{n+1}=j\circ j^n$.

 Let $j$ be an elementary embedding and $\kappa=\crt(j)$. Define $\kappa_0=\kappa$ and $\kappa_{n+1}=j(\kappa_n)$. Then $\langle \kappa_n:n\in\omega\rangle$ is the \emph{critical sequence} of $j$.

 Kunen \cite{Kunen} proved under \AC{} that if $M=N=V_\eta$ for some ordinal $\eta\leq\Ord$, and $\lambda$ is the supremum of the critical sequence, then $\eta$ cannot be bigger than $\lambda+1$ (and of course cannot be smaller than $\lambda$).

 This at the time was considered a stop to the large cardinal study, as a $j:V\prec V$ would have been the largest possible cardinal, but Kunen's result leaves room for a new breed of large cardinal hypotheses, sometimes referred to in the literature as rank-into-rank hypotheses:
 \begin{description}
  \item[I3] iff there exists $\lambda$ s.t. $\exists j:V_\lambda\prec V_\lambda$;
  \item[I2] iff there exists $\lambda$ s.t. $\exists j:V\prec M$, with $V_\lambda\subseteq M$ and $\lambda$ is the supremum of the critical sequence;
  \item[I1] iff there exists $\lambda$ s.t. $\exists j:V_{\lambda+1}\prec V_{\lambda+1}$. 
 \end{description}
 
 The consistency order of the above hypotheses is reversed with respect to their numbering: I1 is strictly stronger than I2, which in turn is strictly stronger than I3 (see \cite{Laver}). All of these hypotheses are strictly stronger than all of the large cardinal hypotheses outside the rank-into-rank umbrella (see \cite{Kanamori}, 24.9 for $n$-huge cardinals, or \cite{Corazza2} for the Wholeness Axiom). I3 enjoyed a particularly rich literature, as it has an interesting algebraic content \cite{Dehornoy}.

 Note that if $j$ witnesses a rank-into-rank hypothesis, then $\lambda$ is uniquely determined by $j$, so in the following $\lambda$ always denotes the first nontrivial fixed point of the embedding $j$ under consideration. We write $I1(\lambda)$ for $\exists j:V_{\lambda+1}\prec V_{\lambda+1}$.
 
 Suppose that $j:V_\lambda\prec V_\lambda$ with critical sequence $\langle\kappa_n:n\in\omega\rangle$ and let $A\subseteq V_\lambda$. Then we can define $j^+(A)=\bigcup_{n\in\omega}j(A\cap V_{\kappa_n})$. Such $j^+:V_{\lambda+1}\to V_{\lambda+1}$ is a $\Sigma_0$ elementary embedding (see for example Lemma 1 in \cite{Laver2}). The key point is that $j^+$ is, in $V_{\lambda+1}$, a definable class (definable with $j\upharpoonright V_\lambda$ as a parameter). Then one can ask whether $j^+$ is stronger, i.e., a $\Sigma_n$ elementary embedding, with $n>0$. Laver proved in \cite{Laver} that this is strictly stronger than I3($\lambda$), and yet still expressible in $V_{\lambda+1}$. Now, $j^+$ is a full elementary embedding (i.e., it witnesses I1($\lambda$) if and only if it is $\Sigma_n$ for every $n$. Therefore, suppose that $j:V_{\lambda+1}\prec V_{\lambda+1}$. Then $j=(j\upharpoonright V_\lambda)^+$ and $V_{\lambda+1}$ ``knows'' that $j$, as a class defined with parameter $j\upharpoonright V_\lambda$, is an elementary embedding.
 
 \begin{rem}
 \label{I1}
 Suppose that $j:V_{\lambda+1}\to V_{\lambda+1}$. Then there are $\varphi_n(x)$ formulas such that for any $n\in\omega$, $V_{\lambda+1}\vDash\varphi_n(j\upharpoonright V_\lambda)$ iff $j$ is an elementary embedding.  
 \end{rem}

 This implies that if I1$(\lambda)$, then $L_1(V_{\lambda+1})\vDash I1(\lambda)$. Note that this is peculiar to I1, as $V_{\lambda+1}$ is not a model for \ZFC. $V_\lambda$ cannot satisfy I3($\lambda$), as $V_\lambda$ is a model for \ZFC{} and that goes against Kunen's Theorem.
 
 In the early 1980's Woodin proposed an axiom even stronger than all the previous ones:

\begin{description}
   \item[I0] For some $\lambda$ there exists a $j:L(V_{\lambda+1})\prec L(V_{\lambda+1}),\text{ with }\crt(j)<\lambda$.
\end{description}

Again, $I0(\lambda)$ expresses what is expected.

Note that if $\lambda$ witnesses I0, then $L(V_{\lambda+1})\nvDash\AC$, because otherwise $L(V_{\lambda+1})\vDash\ZFC$, and we would contradict the proof of Kunen's Theorem \cite{Kunen}, which shows that one cannot have $j:V\prec V$ with critical point less than $\lambda$ and a well-order of $V_{\lambda+1}$ in $V$. The fact that I0 is strictly stronger than I1 was proved by Laver \cite{Laver}.

I0 is probably the most interesting of the rank-into-rank axioms: it is the only very large cardinal that induces a structure on an inner model, therefore creating a new field of research and new tools, and morevoer the structure is reminiscent of the one induced by the Axiom of Determinacy, for reasons that are still not completely understood \cite{Woodin}.
 
 An embedding that witnesses I0 has an ultrapower structure:
 \begin{lem}
 Let $j:L(V_{\lambda+1})\prec L(V_{\lambda+1})$ be such that $\crt(j)<\lambda$. Let
  \begin{equation*}
   U=U_j=\{X\in L(V_{\lambda+1})\cap V_{\lambda+2}:\ j\upharpoonright V_\lambda\in j(X)\}. 
  \end{equation*}
Then $U$ is an $L(V_{\lambda+1})$-ultrafilter such that $\Ult(L(V_{\lambda+1}),\ U)$ is well-founded. By condensation the collapse of $\Ult(L(V_{\lambda+1}),\ U)$ is $L(V_{\lambda+1})$, and $j_U:L(V_{\lambda+1})\prec L(V_{\lambda+1})$, the inverse of the collapse, is an elementary embedding. Moreover, there is an elementary embedding $k_U:L(V_{\lambda+1})\prec L(V_{\lambda+1})$ with $\crt(k_U)>\Theta^{L(V_{\lambda+1})}$ such that $j=k_U\circ j_U$.
\end{lem}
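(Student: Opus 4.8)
The plan is to realize $j$ as the ultrapower embedding induced by the derived ultrafilter $U$, using the seed $s:=j\upharpoonright V_\lambda$, and to keep the ultrapower under control through the canonical factor map back into $L(V_{\lambda+1})$. First I would check that $U$ is an $L(V_{\lambda+1})$-ultrafilter. Since $\lambda$ is the supremum of the critical sequence we have $j(\lambda)=\lambda$, hence $j(V_{\lambda+1})=V_{\lambda+1}$ (as computed in $L(V_{\lambda+1})$, where $V_{\lambda+1}^{L(V_{\lambda+1})}=V_{\lambda+1}$); moreover the graph of $s=j\upharpoonright V_\lambda$ is contained in $V_\lambda$, so $s\in V_{\lambda+1}\subseteq L(V_{\lambda+1})$ and the condition ``$s\in j(X)$'' is meaningful for every $X\in V_{\lambda+2}\cap L(V_{\lambda+1})$. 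The ultrafilter axioms are then immediate from elementarity: $j$ commutes with Boolean operations, $j(\emptyset)=\emptyset$ and $j(V_{\lambda+1})=V_{\lambda+1}\ni s$, so $U$ is closed under supersets and finite intersections and decides every $X$ against its complement.

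Next I would form $\Ult(L(V_{\lambda+1}),U)$ out of functions $f\in L(V_{\lambda+1})$ with $f\colon V_{\lambda+1}\to L(V_{\lambda+1})$; the relevant index sets $\{a:f(a)=g(a)\}$ and $\{a:f(a)\in g(a)\}$ lie in $V_{\lambda+2}\cap L(V_{\lambda+1})$, so $=_U$ and $\in_U$ are well defined. I then introduce the factor map $k([f]_U)=j(f)(s)$. For $*\in\{=,\in\}$ elementarity gives $j(\{a:f(a)*g(a)\})=\{a:j(f)(a)*j(g)(a)\}$, whence
\[
[f]_U *_U [g]_U\iff \{a:f(a)*g(a)\}\in U\iff s\in j(\{a:f(a)*g(a)\})\iff k([f]_U)*k([g]_U).
\]
Thus $k$ is a well-defined, injective, $\in$-preserving and $\in$-reflecting map into the well-founded class $L(V_{\lambda+1})$, so any $\in_U$-descending chain would produce an $\in$-descending chain in $L(V_{\lambda+1})$; hence $\Ult(L(V_{\lambda+1}),U)$ is well-founded (extensionality and set-likeness are handled by Scott's trick). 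Taking $j_U(x)=[c_x]_U$ for the constant functions yields $k\circ j_U=j$, and after transitively collapsing to $M$ with collapsed factor map $\bar k=k_U\colon M\to L(V_{\lambda+1})$ this reads $j=k_U\circ j_U$.

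It remains to show that $j_U$ is elementary and that $M=L(V_{\lambda+1})$ with $\crt(k_U)>\Theta^{L(V_{\lambda+1})}$. The engine is the claim that $\ran(k)$ is an elementary submodel of $L(V_{\lambda+1})$: once $k_U\colon M\prec L(V_{\lambda+1})$ is fully elementary, the factorization $j=k_U\circ j_U$ together with the elementarity of $j$ forces $j_U$ to be elementary too. To identify $M$, I would first verify, using that $s$ encodes $j\upharpoonright V_\lambda$ via the decoding $j(b)=\bigcup_n j(b\cap V_{\kappa_n})$, that $V_{\lambda+1}^M=V_{\lambda+1}$ and $k_U\upharpoonright V_{\lambda+1}=\mathrm{id}$; this propagates to $k_U\upharpoonright(V_{\lambda+2}\cap L(V_{\lambda+1}))=\mathrm{id}$, and since every $\gamma<\Theta^{L(V_{\lambda+1})}$ is coded by a prewellordering of $V_{\lambda+1}$ living in $V_{\lambda+2}\cap L(V_{\lambda+1})$, we obtain $\crt(k_U)>\Theta^{L(V_{\lambda+1})}$. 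Finally $M$ is a transitive model of $V=L(V_{\lambda+1})$ with the correct $V_{\lambda+1}$ agreeing with $L(V_{\lambda+1})$ past $\Theta^{L(V_{\lambda+1})}$, so condensation gives $M=L(V_{\lambda+1})$.

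The main obstacle is precisely the elementarity step $\ran(k)\prec L(V_{\lambda+1})$, i.e. \L o\'s's theorem for this ultrapower, because $L(V_{\lambda+1})\nvDash\AC$. Concretely, the existential step — passing from $\{a:L(V_{\lambda+1})\vDash\exists y\,\varphi(y,f(a))\}\in U$ to a single $g\in L(V_{\lambda+1})$ selecting witnesses on a $U$-large set — is not automatic without a choice function on $V_{\lambda+1}$. I would handle this using the canonical definable Skolem functions of $L(V_{\lambda+1})$ over $\Ord\cup V_{\lambda+1}$ (every element has the form $t(\eta,v)$ for a term $t$, an ordinal $\eta$ and some $v\in V_{\lambda+1}$), which reduces the required selection to a choice of ordinals — harmless, as the ordinals are well-ordered — with the $V_{\lambda+1}$-parameters supplied through the seed $s$; the dependent-choice principles available in $L(V_{\lambda+1})$ under these hypotheses are what legitimize the reduction. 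This is the delicate core; once it is in place, the rest is bookkeeping with elementarity.
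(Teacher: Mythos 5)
The paper does not actually prove this lemma: it is stated as background, imported from Woodin's analysis in \cite{Woodin}, so there is no proof of record to compare yours against. Judged on its own terms, your reconstruction follows the standard argument and its skeleton is sound: deriving the ultrafilter from the seed $s=j\upharpoonright V_\lambda$ (noting $s\in V_{\lambda+1}$), getting well-foundedness from the injective, $\in$-preserving and $\in$-reflecting factor map $k([f]_U)=j(f)(s)$ into the well-founded class $L(V_{\lambda+1})$, recovering $V_{\lambda+1}\subseteq\ran(k)$ by decoding each $b\in V_{\lambda+1}$ from $s$ and $j(b)$, reading off $\crt(k_U)>\Theta^{L(V_{\lambda+1})}$ from prewellorderings of $V_{\lambda+1}$, and finishing by condensation. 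These are exactly the right moves.

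The one place where the write-up is not yet a proof is the step you yourself flag as the delicate core. Two remarks. First, the appeal to ``dependent-choice principles available in $L(V_{\lambda+1})$'' is a red herring: $\DC_\lambda$ in $L(V_{\lambda+1})$ is itself a nontrivial consequence of these hypotheses, and in any case it is not what rescues the witness selection. What rescues it is the ingredient you already established, namely $V_{\lambda+1}\subseteq\ran(k)$: run a Tarski--Vaught argument for $\ran(k)\prec L(V_{\lambda+1})$ rather than an abstract {\L}o\'s theorem for the ultrapower. Given $L(V_{\lambda+1})\vDash\exists y\,\varphi(y,k([f]_U))$, write the witness as $t(\eta,v)$ with $v\in V_{\lambda+1}$, let $\eta_0$ be least such that some $v$ works; then \emph{any} such $v$ already lies in $\ran(k)$, so no selection of witnesses on a $U$-large set is needed at all --- this is precisely how the absence of a well-ordering of $V_{\lambda+1}$ is circumvented. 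Second, you still owe the verification that $\eta_0\in\ran(k)$: one represents it as $j(h)(s)$ where $h(a)$ is the least ordinal working for $f(a)$, and checking $j(h)(s)=\eta_0$ requires an induction on the complexity of $\varphi$, establishing the elementarity of $k$ level by level. Neither point is fatal --- both are standard --- but as written the final paragraph names the obstacle and gestures at the mechanism without actually supplying it.
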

 
 We can use the ultrapower structure to define iterability for $j$:
 \begin{defin}
 Let $j:L(V_{\lambda+1})\prec L(V_{\lambda+1})$ with $\crt(j)<\lambda$ be an elementary embedding, and suppose $j$ is generated by $U=U_j$. Define
 \begin{equation*}
  j(U)=\bigcup\{j(\ran(\pi)): \pi\in L(V_{\lambda+1}), \pi:V_{\lambda+1}\rightarrow U\}
 \end{equation*}
 and then define $j_2$ as the map associated to $j(U)$. 

 Define the successive iterates in the usual way: let $\alpha$ be an ordinal. Then
 \begin{itemize}
  \item if $\alpha=\beta+1$, $M_\beta$ is well-founded and $j_\beta: M_\beta\prec M_\beta$ is the ultrapower via $W$, then $M_\alpha=\Ult(M_\beta, j_\beta(W))$ and $j_\alpha=j_\beta(j_\beta)$.
  \item if $\alpha$ is a limit, let $(M_\alpha, j_\alpha)$ be the direct limit of $(M_\beta, j_\beta)$ with $\beta<\alpha$. 
 \end{itemize}

 We say that $j$ is iterable, if for every $\alpha\in\Ord$, $M_\alpha$ is well-founded and $j_\alpha:M_\alpha\prec M_\alpha$. In this case, we call $j_{\alpha,\beta}$ the natural embeddings between $M_\alpha$ and $M_\beta$.
\end{defin}

The following is a conjunction of Lemma 16 and Lemma 21 in \cite{Woodin}:

\begin{teo}
\label{iterable}
 Let $j:L(V_{\lambda+1})\prec L(V_{\lambda+1})$ with $\crt(j)<\lambda$ be a proper elementary embedding. Then $j$ is iterable. Moreover, for any $n\in\omega$, $j_n:L(V_{\lambda+1})\prec L(V_{\lambda+1})$.
\end{teo}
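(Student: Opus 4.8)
The plan is to prove both assertions simultaneously by induction along the iteration, using the ultrapower representation of $j$ and the factorization $j=k_U\circ j_U$ from the preceding lemma as the engine. First I would fix the base case. By hypothesis $\Ult(L(V_{\lambda+1}),U)$ is well-founded, and by condensation its transitive collapse is $L(V_{\lambda+1})$ itself, so the ultrapower map $j_U\colon L(V_{\lambda+1})\prec L(V_{\lambda+1})$ is a genuine proper self-embedding with $\crt(j_U)<\lambda$, and $U$ is the ultrafilter derived from the seed $s=j\upharpoonright V_\lambda$ via $j$. This identifies the model $M_1$ with $L(V_{\lambda+1})$ and gives the first embedding.

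For the finite iterates (the ``moreover'' clause) I would argue by induction on $n$ that $M_n=L(V_{\lambda+1})$ and that $j_n$ is a proper elementary self-embedding of $L(V_{\lambda+1})$ with critical point below $\lambda$. The key observation is that the relevant property of the code $e=j_n\upharpoonright V_\lambda\in V_{\lambda+1}\subseteq L(V_{\lambda+1})$ --- namely that $e^+$ is a proper elementary embedding $V_{\lambda+1}\prec V_{\lambda+1}$ and that the induced $L(V_{\lambda+1})$-ultrafilter has a well-founded ultrapower --- is expressible over $L(V_{\lambda+1})$ with parameter $e$; the elementarity at the level of $V_{\lambda+1}$ is exactly what the formulas $\varphi_n$ of Remark~\ref{I1} capture. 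Since $j_n$ is elementary, it preserves this property, so its image $j_n(e)$ codes $j_{n+1}=j_n(j_n)$ with the same features, and condensation applied to $M_{n+1}=\Ult(M_n,j_n(U))$ identifies $M_{n+1}$ with $L(V_{\lambda+1})$. This closes the induction and yields $j_n\colon L(V_{\lambda+1})\prec L(V_{\lambda+1})$ for every $n\in\omega$.

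For full iterability the genuinely new content is well-foundedness at transfinite stages, and here I would reduce everything to a countable-completeness property of $U$ that propagates along the iteration. The point is that $U$, being derived from $j$ (equivalently from the factor $k_U$, whose critical point exceeds $\Theta^{L(V_{\lambda+1})}$) via the seed $s=j\upharpoonright V_\lambda$, is countably complete in $V$: given countably many $X_k\in U$, the single element $s$ witnesses membership in every $j(X_k)$ simultaneously, which is the standard reason that an ultrafilter read off from an embedding into a well-founded model is $\sigma$-complete. Elementarity then transports this completeness to each image ultrafilter $j_{0,\beta}(U)$ used at later stages, so that the whole tower is countably complete. With this in hand a Gaifman-style thread argument shows that every direct limit $M_\gamma$ is well-founded, whence $j_\gamma\colon M_\gamma\prec M_\gamma$ by the {\L}o\'{s} theorem at successors and by the direct-limit construction at limits.

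I expect the main obstacle to be precisely the limit stages of countable cofinality, above all $M_\omega$. For a limit $\gamma$ of uncountable cofinality any putative infinite descending $\in$-chain in the direct limit is already captured at a single earlier stage $M_\beta$ with $\beta<\gamma$, contradicting the inductive hypothesis, so these cases are painless. When $\cof(\gamma)=\omega$, however, a descending chain need not concentrate at one coordinate and must be genuinely threaded, and this is exactly the step that consumes the countable completeness of the tower $\langle j_{0,\beta}(U):\beta<\gamma\rangle$. Verifying that the completeness survives the iteration through all such limits --- so that the threads always exist and $M_\gamma$ embeds into the well-founded $L(V_{\lambda+1})$ --- is the technical heart of the proof; the remaining steps are routine bookkeeping with elementarity and condensation.
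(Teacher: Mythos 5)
The paper offers no proof of this theorem: it is imported wholesale as ``a conjunction of Lemma 16 and Lemma 21 in \cite{Woodin}'', so there is nothing in-paper to match your argument against, and the assessment has to be of your sketch on its own terms. Your sketch transplants the classical Gaifman template from the \AC{} world --- derived ultrafilter, countable completeness, well-foundedness of all iterates --- and the reason the theorem is a substantial result of Woodin's is precisely that this template does not survive the passage to $L(V_{\lambda+1})\nvDash\AC$. The first concrete gap is in your induction for the finite iterates: you assert that ``the induced $L(V_{\lambda+1})$-ultrafilter has a well-founded ultrapower'' is expressible over $L(V_{\lambda+1})$ with parameter $e=j_n\upharpoonright V_\lambda$ and hence preserved by elementarity. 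It is not. The ultrafilter $U=U_j$ lives on $V_{\lambda+2}\cap L(V_{\lambda+1})$ and is not available as an element of $L(V_{\lambda+1})$ to which $j_n$ could be applied --- this is exactly why the paper must define $j(U)$ by the stretching $\bigcup\{j(\ran(\pi)):\pi\in L(V_{\lambda+1}),\ \pi:V_{\lambda+1}\to U\}$ rather than as a literal image --- and well-foundedness of the resulting proper-class ultrapower is a statement about $V$, not a first-order property of $e$ over $L(V_{\lambda+1})$. The formulas $\varphi_n$ of Remark \ref{I1} express only the elementarity of $e^+$ on $V_{\lambda+1}$; they say nothing about ultrapowers of $L(V_{\lambda+1})$. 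Your argument also never addresses why {\L}o\'{s}'s theorem holds for ultrapowers of a model of $\ZF$ without choice by an external ultrafilter, which is a prerequisite even for the statement $j_\alpha:M_\alpha\prec M_\alpha$.

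The second concrete gap is the countable-completeness step. From $X_k\in U$ for $k<\omega$ you correctly get $j\upharpoonright V_\lambda\in\bigcap_k j(X_k)$, but what the Gaifman argument consumes is $\bigcap_k X_k\neq\emptyset$, and pulling the witness back requires applying $j$ to the sequence $\langle X_k:k<\omega\rangle$ as a single object. That sequence is chosen in $V$, and you give no reason for it (or a code for it) to lie in $L(V_{\lambda+1})$; establishing the relevant $V_{\lambda+1}$-indexed completeness of $U$, and then of the stretched images $j_{0,\beta}(U)$ which are not obtained by elementarity, is where Woodin's analysis of properness, the $\Theta$-sequence, and the definability structure of $L(V_{\lambda+1})$ actually enters. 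So the difficulty you locate at the $\omega$-cofinality limits is real, but the completeness property you propose to thread through those limits has not been secured even at stage $0$. As written, the proposal is a plausible outline of the measurable-cardinal proof, not a proof of this theorem.
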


Theorem \ref{iterable} states that $M_n=L(V_{\lambda+1})$ for $n<\omega$, but $M_\omega$ is definitively different. The key point is that $j_{0,\omega}(\crt(j))=\lambda$, so many characteristics of the critical point of $j$ are transferred by elementarity to $\lambda$ in $M_\omega$. For example, in $L(V_{\lambda+1})$, $\crt(j)$ is measurable and there is a well-ordering of $V_\lambda$, therefore $\lambda$ is measurable in $M_\omega$ and there is a well-ordering of $V_{j_{0,\omega}(\lambda)}^{M_\omega}=V_{j_{0,\omega}(\lambda)}\cap M_\omega$ in $M_\omega$.

Trees are a typical structure that is investigated in combinatorics. Let $\alpha$ be an ordinal. For any $s\in[\alpha]^n$, $\lh(s)=n$. A tree on $\alpha$ is a subset of $[\alpha]^{<\omega}$ closed under initial segments. If $T$ is a tree, for any $s\in T$, denote $T_s=\{t\in T:t\subseteq s\wedge s\subseteq t\}$, $\Suc_T(s)=\{\beta\in\alpha:t^\smallfrown\langle\beta\rangle\in T\}$ and finally for any $n\in\omega$, $\Lev_n(T)=\{s\in T:\lh(s)=n\}$.
 
 \section{General procedure}
  In \cite{Woodin} Woodin introduced Generic Absoluteness for $I0$, while in \cite{DimFried} one of the authors and Sy Friedman used it to prove a single result about the power function and rank-into-rank embeddings. Here we introduce a general procedure that extends the scope of \cite{DimFried} to many more kinds of forcing, and in the next sections we will give some important examples.
  
  One of the most important forcing in dealing with the combinatorics of singular cardinals of cofinality $\omega$ is Prikry forcing. It adds a cofinal sequence to a measurable cardinal.
	\begin{defin}
	 A cardinal $\kappa$ is measurable iff there exists a $\kappa$-complete ultrafilter on $\kappa$.
	\end{defin}
	
  \begin{defin}
   Let $\kappa$ be a measurable cardinal. Fix $U$ a normal measure on $\kappa$. Define $p\in\mathbb{P}$ iff $p=(s,A)$, where $s\in[\kappa]^{<\omega}$, $A\in U$ and $\bigcup s<\bigcap A$. For $p=(s,A),\ q=(t,B)\in\mathbb{P}$, we say $q\leq p$ iff $s\subseteq t$, $B\subseteq A$ and $t\setminus s\subseteq A$.
  \end{defin}
	
	Prikry forcing is useful because it is a very "`delicate"' forcing \cite{Gitik}: it does not add bounded subsets of $\kappa$, and is $\kappa^+$-cc, so it does not change the cardinal structure above $\kappa$. In other words, it makes $\kappa$ singular while changing the universe at least as possible.
  
  The following is instead the tree Prikry forcing:
  \begin{defin}
   Let $\kappa$ be a measurable cardinal. Fix $U$ an ultrafilter on $\kappa$. The tree Prikry forcing $\mathbb{P}$ is the set of conditions $p=(s_p,T^p)$, where $s_p$ is a finite sequence of ordinals in $\kappa$, and $T^p$ is a tree of increasing sequences in $\kappa$ with stem $s_p$, such that for any $t\in T^p$, $\Suc_{T^p}(t)\in U$. We say that $p\leq q$ if $s_p\supseteq s_q$ and $T^p\subseteq T^q$. We say that $p\leq^* q$ if $p\leq q$ and $s_p=s_q$. For any $p\in\mathbb{P}$ and $t\in T^p$, we write $p\oplus t$ for $(t,(T^p)_t)$. 
  \end{defin}

  The difference between the two forcings is minimal: the only difference is that standard Prikry forcing uses a normal ultrafilter, while for tree Prikry forcing normality is not needed. As for the majority of times the ultrafilters are normal, the two forcing are interchangeable, and using one or the other is a matter of better clarity of the proof.
  
  The general technique, in short, will be to start with $I0(\kappa,\lambda)$, and then the following Theorem \ref{general} expresses the fact that if one forces with an ``Easton-like'' forcing and then with a ``Prikry-like'' forcing at $\kappa$, by Generic Absoluteness one can have $I1(\kappa)$. While we have already a formal definition for ``Easton-like'' (reverse Easton iteration), we need a definition of ``Prikry-like'' suitable to our wants. 
  
  \begin{defin}
  \label{geometric}
   Let $\mathbb{P}$ be a forcing notion and $\kappa$ a cardinal. We say that $\mathbb{P}$ is $\kappa$-geometric if
   \begin{itemize}
    \item there exists a length measure of the conditions of $\mathbb{P}$, i.e. $l:\mathbb{P}\to\omega$ such that $l(1_\mathbb{P})=0$ and for any $p,q\in\mathbb{P}$, if $p\leq q$ then $l(p)\geq l(q)$.
    \item for any $\alpha<\kappa$, if $\langle D_\beta:\beta<\alpha\rangle$ is a collection of open dense sets, then for every $p\in\mathbb{P}$ there is a condition $q\leq p$ such that whenever a filter contains $q$ and meets all the dense open sets $E_n=\{p:l(p)>n\}$, it also meets all the $D_\beta$'s.
   \end{itemize}
  \end{defin}

  This notion implies the notion of $\kappa$-goodness that was first introduced by Woodin in \cite{Woodin}, and then perfectioned by Shi in \cite{Shi}. The change is due to the fact that $\kappa$-geometric is more natural in working with tree Prikry-like forcings, even if it works in general:
	
	\begin{lem}
	\label{Prikry}
	 Prikry forcing on $\kappa$ is $\kappa$-geometric.
	\end{lem}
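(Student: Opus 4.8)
The plan is to take as length measure the length of the stem and to derive the second, more substantial, clause of Definition \ref{geometric} from the Prikry property of $\mathbb{P}$, exploiting the $\kappa$-completeness of $U$ to treat all $\alpha<\kappa$ dense sets at once. Concretely, I set $l((s,A))=\lh(s)$. Then $l((\emptyset,\kappa))=0$, so $l(1_\mathbb{P})=0$, and if $(t,B)\leq(s,A)$ then $s\subseteq t$, whence $\lh(t)\geq\lh(s)$; thus $l$ is a genuine length measure. Note also that with this choice $E_n=\{p:l(p)>n\}$ is exactly the (dense, open) set of conditions whose stem has length greater than $n$, so a filter meeting every $E_n$ is precisely one whose stems are cofinal in $\omega$.

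The engine of the argument is the \emph{Prikry property} in the following sharp form (standard for normal measures, see \cite{Gitik}): for every open dense $D\subseteq\mathbb{P}$ and every $(s,A)\in\mathbb{P}$ there are $A'\subseteq A$ with $A'\in U$ and $n\in\omega$ such that $(t,A'\setminus(\max t+1))\in D$ for every $t\supseteq s$ with $t\setminus s\subseteq A'$ and $\lh(t)=\lh(s)+n$. I would then, given $p=(s,A)$ and the sequence $\langle D_\beta:\beta<\alpha\rangle$, apply this to each $D_\beta$ to obtain $A_\beta\subseteq A$ in $U$ and $n_\beta\in\omega$ witnessing it. Here is where $\alpha<\kappa$ is used: since $U$ is $\kappa$-complete, $B:=\bigcap_{\beta<\alpha}A_\beta$ still lies in $U$, and I put $q:=(s,B)\leq p$. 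Because each $D_\beta$ is open (downward closed) and $B\subseteq A_\beta$, the witnessing property survives the shrinking: for every $\beta<\alpha$ and every $t\supseteq s$ with $t\setminus s\subseteq B$ and $\lh(t)=\lh(s)+n_\beta$ we still have $(t,B\setminus(\max t+1))\in D_\beta$.

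It remains to verify the filter clause. Let $G$ be any filter with $q\in G$ meeting every $E_n$, and fix $\beta<\alpha$. Since $G$ meets $E_{\lh(s)+n_\beta}$, choose $(t',C')\in G$ with $\lh(t')>\lh(s)+n_\beta$, and let $(u,E)\in G$ be a common lower bound of $(t',C')$ and $q$ (available as $G$ is a filter). Then $s\subseteq u$, $u\setminus s\subseteq B$, $E\subseteq B$, and $\max u<\min E$; moreover every element of $u\setminus s$ exceeds $\max s$, so $s$ consists of the $\lh(s)$ least elements of $u$. Let $t$ consist of the $\lh(s)+n_\beta$ least elements of $u$; then $s\subseteq t\subseteq u$, $t\setminus s\subseteq B$ and $\lh(t)=\lh(s)+n_\beta$, so $(t,B\setminus(\max t+1))\in D_\beta$ by the previous paragraph. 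Finally $(u,E)\leq(t,B\setminus(\max t+1))$: indeed $t\subseteq u$, the elements of $u\setminus t\subseteq u\setminus s\subseteq B$ all exceed $\max t$, and $E\subseteq B$ with $\min E>\max u\geq\max t$. As $G$ is upward closed we conclude $(t,B\setminus(\max t+1))\in G\cap D_\beta$, so $G$ meets $D_\beta$, as required.

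The genuinely hard step is the Prikry property invoked above; everything else is the bookkeeping of stems and tails together with the single application of $\kappa$-completeness. I expect its proof to be the main obstacle: in the sharp uniform form I stated, it is obtained by first applying Rowbottom's partition theorem to homogenize, for each length $n$, whether a stem-$(\lh(s)+n)$ extension admits a tail in $U$ pushing it into $D$, then using density of $D$ to locate a length where the homogeneous value is the favorable one, and finally passing to a diagonal intersection (this is where normality of $U$ enters) so that one single tail $A'$ works uniformly for all extensions of that length.
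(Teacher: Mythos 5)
Your proof is correct and follows essentially the same route as the paper: both reduce the problem to the uniform (*-Prikry) form of the Prikry property for a single dense set (Lemma 1.13 in \cite{Gitik}) and then use $\kappa$-completeness of $U$ to handle all $\alpha<\kappa$ dense sets with a single condition. The only difference is bookkeeping --- the paper builds a $\leq^*$-decreasing sequence of conditions, intersecting the measure-one sets at limit stages, whereas you obtain all the $A_\beta$ in parallel from $p$ and intersect once; for a fixed stem these coincide, and your verification of the final filter clause is in fact more explicit than the paper's.
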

	\begin{proof}
   It is a well known fact that for any $D$ dense set and any $p\in\mathbb{P}$, there exists $q=(s,A)\leq^*p$ and $n\in\omega$ such that for any $t\in [A]^n$, $(q\cup t, A\setminus(\max(t)+1))\in D$, see for example Lemma 1.13 in \cite{Gitik}. Now, suppose $\alpha<\kappa$ and $\langle D_\beta:\beta<\alpha\rangle$ is a collection of open dense sets. Let $\langle q_\beta:\beta\leq\alpha\rangle$ be the sequence built with an iteration of the first sentence, i.e., for any $\beta\leq\alpha$, $q_{\beta+1}\leq^* q_\beta$ is such that there exists an $n_{\beta+1}$ such that for any $t$ of length $n$, $(q\cup t, A\setminus(\max(t)+1))\in D_\beta$, and if $\beta\leq\alpha$ is limit, then by $\kappa$-closeness of $U$ let $q_\beta$ be just $(s, \bigcap_{\gamma<\beta}A_{q_\gamma})$. 

   Therefore $q_\alpha$ is as wanted: let $q'\leq q_\alpha=(s_{q_\alpha},A_{q_\alpha})$ such that $q'\in E_{n_\beta}$. Then 
   \begin{equation*}
    q'<(q_\alpha\cup t, A_{q_\alpha}\setminus(\max(t)+1))\leq (q_\beta\cup t, A_{q_\beta}\setminus(\max(t)+1))\in D_\beta
   \end{equation*}
   for some $t$ with length bigger than $n_\beta$.
  \end{proof}
	
  The key point of the lemma, and in fact of any proof of $\kappa$-geometricness, is the variation of the Prikry condition\footnote{The Prikry condition is: for every $p\in\mathbb{P}$ and for every $\sigma$ statement in the forcing language, there exists $q\leq^*p$ such that $q\vDash\sigma$ or $q\vDash\neq\sigma$.} that is presented in the first line of the proof. We isolate it:
  
  \begin{defin}
   Let $(\mathbb{P},\leq,\leq^*)$ be a forcing notion with a length measure (as in Definition \ref{geometric}) such that $p\leq^* q$ iff $l(p)=l(q)$. Then $\mathbb{P}$ satisfies the *-Prikry condition iff for every $p\in\mathbb{P}$ and for every dense $D\subseteq\mathbb{P}$, there are $n\in\omega$ and $q\leq^*p$ such that for any $r\leq q$ with $l(r)=l(q)+n$, $r\in D$.
  \end{defin}

  Such variation is pretty common in literature, even if it is often not explicitly stated, as the proof basically repeats the proof of the Prikry condition. It goes back to Prikry (\cite{Prikry}) and Mathias (\cite{Mathias}). For completeness, we will sketch the proof for the tree Prikry forcing, so it will be clear to the reader how the proof goes for other kinds of forcing that satisfy the Prikry condition.

  \begin{lem}
   \label{*tree}
   Let $\kappa$ be a measurable cardinal. Then the tree Prikry forcing $\mathbb{P}$ on $\kappa$ satisfies the *-Prikry condition.
  \end{lem}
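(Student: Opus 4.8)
The plan is to run the by-now standard thinning (or ``fusion'') argument, originally due to Prikry and Mathias, adapted to the tree setting and using only the $\kappa$-completeness of $U$ (which $U$ has, as $\kappa$ is measurable). Throughout I take the length measure to be $l(p)=\lh(s_p)$, so that for $p\leq q$ the relation $p\leq^* q$ (same stem) indeed coincides with $l(p)=l(q)$, as required by the definition. I may also assume that $D$ is open (downward closed): this is the only case needed in the applications, where $D$ is the set of conditions deciding a fixed statement of the forcing language, and it is what makes the ``all $r$ at level $n$'' formulation meaningful.

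Fix $p=(s_p,T^p)$. For a node $t\in T^p$ extending the stem $s_p$, call $t$ \emph{good} if there are $n_t\in\omega$ and a subtree $S\subseteq(T^p)_t$ with stem $t$ and $\Suc_S(u)\in U$ for every $u\in S$, such that $(u,S_u)\in D$ for every $u\in S$ with $\lh(u)=\lh(t)+n_t$; that is, $(t,S)$ is a condition all of whose $n_t$-step stem-extensions lie in $D$. The whole argument reduces to showing that the stem $s_p$ is good: if $(s_p,S)$ witnesses this with parameter $n$, then $q=(s_p,S)\leq^* p$ and $n$ are as required, since any $r\leq q$ with $l(r)=l(q)+n$ has stem $s_r\in S$ at level $\lh(s_p)+n$, whence $r\leq q\oplus s_r=(s_r,S_{s_r})\in D$, so $r\in D$ by openness.

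The heart of the proof is an amalgamation step: if $\{\alpha\in\Suc_{T^p}(t):t^\frown\langle\alpha\rangle\text{ is good}\}\in U$, then $t$ is good. Here I would first use the $\kappa$-completeness of $U$ to shrink this set to some $G\in U$ on which the witnessing parameter $n_{t^\frown\langle\alpha\rangle}$ takes a constant value $m$ (there are only countably many possible values), and then glue the witnessing subtrees $S_\alpha$ above the successors $\alpha\in G$ into a single subtree $S$ with $\Suc_S(t)=G$; this $S$ witnesses that $t$ is good, with parameter $m+1$. Dually, since $U$ is an ultrafilter and $\Suc_{T^p}(t)\in U$, if $t$ is \emph{not} good then $\{\alpha:t^\frown\langle\alpha\rangle\text{ is not good}\}\in U$.

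To finish, suppose for contradiction that $s_p$ is not good. Applying the dual statement repeatedly I would build a subtree $B\subseteq T^p$ with stem $s_p$, with $\Suc_B(t)\in U$ at every node, and with every node of $B$ not good. Then $(s_p,B)\in\mathbb{P}$, so by density there is $r=(s_r,T^r)\leq(s_p,B)$ with $r\in D$. But then $s_r\in B$ is not good, while $r$ itself witnesses that $s_r$ \emph{is} good (with parameter $0$ and subtree $T^r$): contradiction. Hence $s_p$ is good, and we are done. The one delicate point is the gluing in the amalgamation step: one must check that fixing the parameter via $\kappa$-completeness is exactly what makes the levels of the pieces line up, and that the glued tree retains $\Suc_S(u)\in U$ at every node; both are routine once $G$ has been chosen.
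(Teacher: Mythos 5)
Your argument is correct, but it takes a genuinely different route from the paper's sketch. The paper homogenizes directly in two shrinking steps: first it thins $T^p$ so that whether a stem extension lies in $D$ no longer depends on the tree part, and then, applying the ultrafilter dichotomy at every node (and at double, triple, \dots successors), it thins again so that for each $n$ either every $n$-step stem extension is in $D$ or none is; density of $D$ then forces the positive alternative at some level $n$. You instead argue by contradiction through a notion of \emph{goodness}: a node is good if some measure-one subtree above it puts all of its $n_t$-step extensions into $D$, an amalgamation lemma propagates goodness from a measure-one set of immediate successors to the node (with the parameter stabilized on a measure-one set by $\kappa$-completeness -- this is exactly where the single uniform $n$ comes from), and an entirely-bad subtree is impossible because any $r\in D$ below it witnesses goodness of its own stem with parameter $0$. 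Your route dispenses with the paper's first shrinking step, since the witnessing subtree is built into the definition of goodness, while the paper's route is more constructive, exhibiting the homogenized tree rather than arguing by contradiction. You are also right to make the openness of $D$ explicit: the lemma as literally stated needs it (or the conclusion restricted to the extensions $q\oplus t$, which is how the paper restates the $*$-Prikry condition for the extender-based forcing), and the definition of $\kappa$-geometric only ever applies it to open dense sets.
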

  \begin{proof}[Sketch of the proof]
   Fix $D$ dense and a condition $p\in\mathbb{P}$ with tree $T^p$; we should find a way to shrink $T^p$ to a $T^q$ so that $q=(s_p,T^q)$ is such that any $n$-extension of $q$ is in $D$.
  
   The first step is to find a $T'\subseteq T^p$ such that for any $s\in T^p$ if there exists a $q\leq^*(s,T'_s)$, $q\in D$, then $(s,T'_s)\in D$, that is, such that the tree-part has no role in establishing whether extensions of $p$ are in $D$ or not. This can be done by induction on levels, choosing for any $s$ a $T$ such that $(s,T)\in D$, when it exists, and intersecting everything. The final $T'$ will be such that $(s_p,T')\in\mathbb{P}$ by completeness of the measure.
   
   The second step is to reduce the tree again to a $T^q$ such that if $(s,(T^p)_s)\leq p$ and $(s,(T^p)_s)\in D$, then all the extensions of $p$ of the same length are in $D$. This exploits the fact that the successor of any element of $T'$ are of measure one, therefore, as an example, consider $S=\Suc_T'(s_p)$. Then either the elements of $S$ that are ``in $D$'' (remember that by the first construction ``being in $D$'' does not depend on the tree-part) form a measure one set, or those that are not form a measure one set, and we cut the branches that are not in such set. By induction, we do this on all levels, also extending to double successors (i.e. we consider the elements such that all their successors have all their successors either in $D$ or not), triple successors, etc... Again by completeness the resulting $T^q$ is such that $(s_p,T^q)\in\mathbb{P}$.
   
   Then we have $(s_p, T^q)$ such that all its successors of length $n$, for every $n$, are either in $D$ or not. If there exists an $n$ such that all the $n$-successors are in $D$, then we are done. But there must be one, because $D$ is dense.
  \end{proof}

  \begin{cor}
 	\label{tree} 
    Let $\kappa$ be a measurable cardinal. Then the tree Prikry forcing $\mathbb{P}$ on $\kappa$ is $\kappa$-geometric.
  \end{cor}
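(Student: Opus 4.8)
The plan is to mirror the proof of Lemma \ref{Prikry}, replacing the ad hoc diagonal intersection used there for standard Prikry forcing by the *-Prikry condition just established in Lemma \ref{*tree}. First I would fix the length measure: for a condition $p=(s_p,T^p)$ set $l(p)=\lh(s_p)$. This gives $l(1_\mathbb{P})=0$ since the stem of the maximal condition is empty, and if $p\leq q$ then $s_p\supseteq s_q$, so $l(p)\geq l(q)$; moreover, when $p\leq q$ one has $s_p=s_q$ iff $\lh(s_p)=\lh(s_q)$, so the relation $\leq^*$ is exactly ``$p\leq q$ and $l(p)=l(q)$'', matching the hypothesis of the *-Prikry condition. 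This settles the first bullet of Definition \ref{geometric} and lets us invoke Lemma \ref{*tree}.

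For the second bullet, fix $\alpha<\kappa$, a family $\langle D_\beta:\beta<\alpha\rangle$ of open dense sets, and $p\in\mathbb{P}$. I would build a $\leq^*$-decreasing sequence $\langle q_\beta:\beta\leq\alpha\rangle$ with $q_0=p$, processing one dense set per successor step. At a successor $\beta+1$, apply Lemma \ref{*tree} to $q_\beta$ and $D_\beta$ to obtain $n_\beta\in\omega$ and $q_{\beta+1}\leq^* q_\beta$ such that every $r\leq q_{\beta+1}$ with $l(r)=l(q_{\beta+1})+n_\beta$ lies in $D_\beta$. Since every step is $\leq^*$, all stems remain $s_p$, so $l(q_\beta)=l(p)=:m$ throughout. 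At a limit $\beta\leq\alpha$ I would take the node-wise intersection of the trees, $q_\beta=(s_p,\bigcap_{\gamma<\beta}T^{q_\gamma})$; because $\beta\leq\alpha<\kappa$ and $U$ is $\kappa$-complete, each successor set $\bigcap_{\gamma<\beta}\Suc_{q_\gamma}(t)$ is still in $U$, so $q_\beta\in\mathbb{P}$. Set $q=q_\alpha$.

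It remains to verify that $q$ works. Suppose a filter $G$ contains $q$ and meets every $E_n$. Fix $\beta<\alpha$. Since $q\in G$ and $G$ meets every $E_n$, by directedness there is $r\in G$ with $r\leq q$ and $l(r)\geq m+n_\beta$. Let $s'$ be the initial segment of $s_r$ of length $m+n_\beta$; as $s_r\in T^r\subseteq T^q$ and trees are closed under initial segments, $s'\in T^q\subseteq T^{q_{\beta+1}}$, so $q_{\beta+1}\oplus s'$ is a condition of length exactly $m+n_\beta$ below $q_{\beta+1}$, hence $q_{\beta+1}\oplus s'\in D_\beta$ by construction. Since $s_r$ end-extends $s'$ and $T^r\subseteq T^q\subseteq T^{q_{\beta+1}}$, every node of $T^r$ is comparable with $s'$, so $r\leq q_{\beta+1}\oplus s'$; as $D_\beta$ is open (downward closed) this yields $r\in D_\beta$, whence $G\cap D_\beta\neq\emptyset$.

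The main obstacle I anticipate is bookkeeping rather than conceptual: one must keep the successor steps from altering the stem (so that the length stays fixed at $m$ and Lemma \ref{*tree} can be reapplied) and ensure the limit intersections remain in $\mathbb{P}$, both of which rest on the $\kappa$-completeness of $U$ exactly as in Lemma \ref{Prikry}. The one genuinely tree-specific point is passing from ``$G$ contains a condition of length at least $m+n_\beta$'' to ``$G$ meets $D_\beta$'': this uses openness of $D_\beta$, so that truncating $s_r$ to the exact level $m+n_\beta$ and invoking the *-Prikry guarantee there, then reflecting back down to $r$, suffices.
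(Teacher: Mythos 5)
Your argument is correct and is exactly the route the paper intends: the corollary is left without an explicit proof because it follows from Lemma \ref{*tree} by repeating the argument of Lemma \ref{Prikry}, namely iterating the *-Prikry condition along a $\leq^*$-decreasing sequence (using $\kappa$-completeness of $U$ at limits) and then using openness of the $D_\beta$'s to pass from an $n_\beta$-extension back to an arbitrary member of the filter. Your verification that any $r\leq q_\alpha$ with $l(r)\geq m+n_\beta$ factors through $q_{\beta+1}\oplus s'\in D_\beta$ is precisely the final step of the paper's Lemma \ref{Prikry}, transplanted to the tree setting.
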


 \begin{defin}
\label{Definition}
 Let $\mathbb{P}_\lambda$ be a forcing iteration of length $\lambda$, where $\lambda$ is either a strong limit cardinal or is equal to $\infty$, the class of all ordinals. We say that $\mathbb{P}_\lambda$ is
 \begin{itemize}
  \item \emph{reverse Easton} if nontrivial forcing is done only at infinite cardinal stages, direct limits are taken at all inaccessible cardinal limit stages, and inverse limits are taken at all other limit stages; moreover, $\mathbb{P}_\lambda$ is the direct limit of the $\langle\mathbb{P}_\delta$, $\delta<\lambda\rangle$ if $\lambda$ is regular or $\infty$, the inverse limit of the $\langle\mathbb{P}_\delta$, $\delta<\lambda\rangle$, otherwise;
  \item \emph{directed closed} if for all $\delta<\lambda$, $\mathbb{Q}_\delta$ is $<\delta$-directed closed, i.e., for any $D\subseteq\mathbb{Q}_\delta$, $|D|<\delta$ such that for any $d_1,d_2\in D$ there is an $e\in D$ with $e\leq d_1, e\leq d_2$, there exists $p\in\mathbb{Q}_\delta$ such that $p\leq d$ for any $d\in D$;
  \item \emph{$\lambda$-bounded} if for all $\delta<\lambda$, $\mathbb{Q}_\delta$ has size $<\lambda$. Note that in the case $\lambda=\infty$, this just means that each $\mathbb{Q}_\delta$ is a set-forcing;
 \end{itemize}
 Moreover, if $j$ is any elementary embedding such that $j''\lambda\subset\lambda$ and $\mathbb{P}_\lambda\subset\dom(j)$, we say that $\mathbb{P}_\lambda$ is $j$-coherent if for any $\delta<\lambda$, $j(\mathbb{P}_\delta)=\mathbb{P}_{j(\delta)}$.
\end{defin}
  
  The following theorem summarizes the general procedure:
  \begin{teo}
  \label{general}
   Let $j:L(V_{\lambda+1})\prec L(V_{\lambda+1})$ with $\crt(j)=\kappa<\lambda$. Let $\mathbb{P}$ be a directed closed, $\lambda$-bounded, $j$-coherent reverse Easton iteration. Let $\mathbb{Q}$ be a $\kappa$-geometric forcing in $(V_\lambda)^{V^{\mathbb{P}}}$ that adds a Prikry sequence to $\kappa$. Then there exist $G$ generic for $\mathbb{P}$ and $H$ $V[G]$-generic for $\mathbb{Q}$ such that $V[G][H]\vDash\exists k:V_{\kappa+1}\prec V_{\kappa+1}$. 
  \end{teo}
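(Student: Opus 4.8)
The plan is to manufacture the embedding at $\kappa$ by pushing, through an $\omega$-fold iteration of $j$, the singular structure that $\mathbb{Q}$ creates at $\kappa$ up to the top level $\lambda$ (where the iteration map sends $\kappa$), and then reading the embedding back off via the local description of $I1$ furnished by Remark \ref{I1}. First I would lift $j$ across $\mathbb{P}$. Because $\mathbb{P}$ is $j$-coherent we have $j(\mathbb{P}_\delta)=\mathbb{P}_{j(\delta)}$ and hence $j(\mathbb{P})=\mathbb{P}$ (as $j(\lambda)=\lambda$), so the standard lifting scheme applies: using directed-closure to produce master conditions and $\lambda$-boundedness to count dense sets stage by stage, I would build a $V$-generic $G$ together with an image generic $\hat G$ so that $j$ lifts to an elementary $\hat\jmath\colon L(V_{\lambda+1})[G]\prec L(V_{\lambda+1})[\hat G]$ with $\crt(\hat\jmath)=\kappa$; being the lift of an iterable embedding, $\hat\jmath$ is again iterable. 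This reduces matters to producing the $I1$ embedding over the single model $V[G]$ after the Prikry-like forcing $\mathbb{Q}$.

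Next I would iterate $\hat\jmath$ to $\omega$ (Theorem \ref{iterable}), obtaining $\hat\jmath_{0,\omega}\colon L(V_{\lambda+1})[G]\prec M_\omega[G_\omega]$ with $\hat\jmath_{0,\omega}(\kappa)=\lambda$ and critical sequence $\langle\kappa_n:n<\omega\rangle$ cofinal in $\lambda$. By elementarity $\hat\jmath_{0,\omega}(\mathbb{Q})$ is a $\lambda$-geometric forcing over $M_\omega[G_\omega]$ adding a Prikry sequence to $\lambda$. The key claim is that the critical sequence itself induces an $M_\omega[G_\omega]$-generic filter $H^*$ for $\hat\jmath_{0,\omega}(\mathbb{Q})$: any dense set of the direct limit $M_\omega[G_\omega]$ already appears in some finite iterate $M_n[G_n]$, where $\lambda$-geometricity (Definition \ref{geometric}) provides a condition below which merely meeting all the length sets $E_m$ suffices to meet that dense set; since $\langle\kappa_n\rangle$ has unbounded length and, by the iteration, lies below the relevant conditions, the induced filter meets every dense set. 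I would then choose $H$ over $V[G]$ so that the two Prikry structures are matched by the construction.

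Finally I would read off the embedding. In $M_\omega[G_\omega][H^*]$ the cardinal $\lambda$ is singular of cofinality $\omega$ with the critical sequence as its Prikry sequence, and one amalgamates the residual iteration embeddings into a witness $e^*\colon V_\lambda\to V_\lambda$ with $V_{\lambda+1}^{M_\omega[G_\omega][H^*]}\vDash\varphi_n(e^*)$ for every $n$, i.e.\ a full $I1(\lambda)$ embedding there (Remark \ref{I1}). The construction supplies an elementary correspondence $\Phi$ between $V_{\kappa+1}^{V[G][H]}$ and its image in $V_{\lambda+1}^{M_\omega[G_\omega][H^*]}$ carrying the Prikry sequence of $H$ to $\langle\kappa_n\rangle$, arranged so that $e^*$ lies in its range; since elementarity is a biconditional on the domain, setting $e=\Phi^{-1}(e^*)$ gives $V_{\kappa+1}^{V[G][H]}\vDash\varphi_n(e)$ for all $n$, and by Remark \ref{I1} this $e$ generates the desired $k\colon V_{\kappa+1}\prec V_{\kappa+1}$, so $V[G][H]\vDash\exists k\colon V_{\kappa+1}\prec V_{\kappa+1}$.

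The main obstacle is the genericity step of the second paragraph: checking that the critical sequence induces a genuine generic for an arbitrary $\kappa$-geometric $\mathbb{Q}$ is precisely what Definition \ref{geometric} is engineered to make possible, and it requires verifying both that every dense set of the direct limit is captured at a finite stage and that the critical sequence descends below the geometric conditions appearing there. Closely tied to it is the matching of $H$ with $H^*$ and the elementarity of $\Phi$, which is what allows the \emph{entire} family $\{\varphi_n:n<\omega\}$ — rather than each $\varphi_n$ in isolation — to transfer back to $\kappa$; by contrast, the lifting across $\mathbb{P}$ and the final reflection of $I1$-ness are comparatively routine once these are in hand.
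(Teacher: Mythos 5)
Your overall skeleton (lift across $\mathbb{P}$, iterate to $M_\omega$ so that $\kappa$ is sent to $\lambda$, realize a generic for the image of $\mathbb{Q}$ over $M_\omega$ inside $V$, obtain $I1(\lambda)$ in that extension, and pull the statement back to $\kappa$) matches the paper's, but two of your steps have genuine gaps. The first is the source of $I1(\lambda)$ upstairs: you propose to ``amalgamate the residual iteration embeddings into a witness $e^*$'' for $I1(\lambda)$ in $M_\omega[G_\omega][H^*]$. The tail embeddings $j_{n,\omega}$ act on the iterates, not on the generic extension, and there is no direct amalgamation producing an elementary $e^*$ of that extension; this is precisely the content of Woodin's Generic Absoluteness theorem (Theorem \ref{GenericAbsoluteness}), which the paper invokes: for a Prikry sequence $g$ added by $j_{0,\omega}(\mathbb{Q})$ one gets $L_\alpha(M_\omega[g]\cap V_{\lambda+1})\prec L_\alpha(V_{\lambda+1})$ over $\lambda$, so $M_\omega[g]$ inherits $I1(\lambda)$ from $V$ (where it holds via $j\upharpoonright V_{\lambda+1}$ and Remark \ref{I1}), and a sandwiching argument transfers it to the full extension by the generic filter. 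Without this theorem or a substitute, your construction of $e^*$ is unsupported. Relatedly, your claim that the critical sequence itself induces the generic is correct only for plain Prikry forcing; for a general $\kappa$-geometric $\mathbb{Q}$ the generic filter in $V$ is built abstractly: there are only $\lambda$ many dense subsets of $j_{0,\omega}(\mathbb{Q})$ in $V$ (the forcing and its subsets sit inside $M_\omega\cap V_{j_{0,\omega}(\lambda)}$, which has size $\lambda$), so one takes for each $n$ a condition $q_n$ witnessing geometricity for the first $\kappa_n$ many of them and interleaves extensions into the sets $E_m$; the critical sequence plays no direct role.

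The second gap is the descent to $\kappa$. You posit an elementary correspondence $\Phi$ between $V_{\kappa+1}^{V[G][H]}$ and part of $V_{\lambda+1}^{M_\omega[G_\omega][H^*]}$ carrying the Prikry sequence of $H$ to the critical sequence, with $e^*$ in its range. No such map is constructed, and producing one would amount to lifting $j_{0,\omega}$ to the $\mathbb{Q}$-extension, which would require $H^*$ to cohere with $j_{0,\omega}(H)$ --- something that has not been arranged, and indeed $H$ has not even been chosen at that point (the theorem only asserts that \emph{some} $H$ works). The paper sidesteps this entirely: since $H^*\in V$ is $M_\omega$-generic and $M_\omega[H^*]\vDash I1(\lambda)$, the forcing theorem gives $M_\omega\vDash\exists p\in j_{0,\omega}(\mathbb{Q})\ \bigl(p\Vdash\exists i: V_{\check{\lambda}+1}\prec V_{\check{\lambda}+1}\bigr)$, and applying the elementarity of $j_{0,\omega}$ backwards yields $L(V_{\lambda+1})^{V[G]}\vDash\exists p\in\mathbb{Q}\ \bigl(p\Vdash\exists i:V_{\check{\kappa}+1}\prec V_{\check{\kappa}+1}\bigr)$; any $V[G]$-generic $H$ through such a $p$ then finishes the proof. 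You should replace your $\Phi$ with this first-order pullback along $j_{0,\omega}$.
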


  The relevant point of the proof is the forcing $\mathbb{Q}$, as by Lemma 3.6 and Lemma 3.7 in \cite{DimFried} combined, for any $G$ generic for $\mathbb{P}$, $V[G]\vDash\exists j:L(V_{\lambda+1})\prec L(V_{\lambda+1}),\ \crt(j)=\kappa$. So, for better readability, from now on we call the generic extension of $\mathbb{P}$ just $V$.
	
	Let $j_{0,\omega}:L(V_{\lambda+1})\prec M_\omega$ the $\omega$-th iterate of $j$. Then $j_{0,\omega}(\mathbb{Q})$ is a $\lambda$-geometric forcing that adds a Prikry sequence to $\lambda$ in $M_\omega$.
	
  \begin{lem}
   In $V$ there are only $\lambda$ open dense sets of $j_{0,\omega}(\mathbb{Q})$.
  \end{lem}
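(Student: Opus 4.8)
The plan is to read the statement as counting the open dense subsets of $\mathbb{Q}'=j_{0,\omega}(\mathbb{Q})$ that lie in $M_\omega$ (these are exactly the dense sets relevant for the later construction of an $M_\omega$-generic filter; counting all open dense subsets in $V$ would instead give $2^\lambda$, since $|\mathbb{Q}'|\ge\lambda$), and to show there are at most $\lambda$ of them as computed in $V$. First I would use that $M_\omega$ is the direct limit of the system $\langle M_m,j_{m,m'}\rangle_{m\le m'<\omega}$, so that every element of $M_\omega$, and in particular every open dense $D\subseteq\mathbb{Q}'$ with $D\in M_\omega$, has the form $D=j_{m,\omega}(d)$ for some $m<\omega$ and some $d\in M_m$. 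Since by Theorem \ref{iterable} $M_m=L(V_{\lambda+1})$ and $j_{0,\omega}=j_{m,\omega}\circ j_{0,m}$, full elementarity of $j_{m,\omega}$ on $L(V_{\lambda+1})$ gives that $d$ is an open dense subset of $j_{0,m}(\mathbb{Q})$ in $L(V_{\lambda+1})$, and conversely such a $d$ is carried by $j_{m,\omega}$ to an open dense subset of $\mathbb{Q}'$. Thus it suffices to bound, for each fixed $m$, the number of open dense subsets of $j_{0,m}(\mathbb{Q})$ in $L(V_{\lambda+1})$, and then to sum over the countably many $m$.

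For the level-by-level bound I would argue by rank. Let $\rho=\rnk(\mathbb{Q})<\lambda$, which is well defined and absolute since $\mathbb{Q}\in V_\lambda\subseteq V_{\lambda+1}\in L(V_{\lambda+1})$. As $j_{0,m}$ fixes $\lambda$ and is order preserving, $j_{0,m}(\rho)<\lambda$, so $j_{0,m}(\mathbb{Q})\subseteq V_{j_{0,m}(\rho)}$ and hence every $d\subseteq j_{0,m}(\mathbb{Q})$ satisfies $d\in V_{j_{0,m}(\rho)+2}$. Moreover $V_{j_{0,m}(\rho)+2}^{L(V_{\lambda+1})}=V_{j_{0,m}(\rho)+2}$, because $j_{0,m}(\rho)+2<\lambda$ forces all the relevant subsets to sit inside $V_\lambda\subseteq V_{\lambda+1}\in L(V_{\lambda+1})$ and rank is absolute between the transitive class $L(V_{\lambda+1})$ and $V$. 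Now $\lambda$ is a strong limit cardinal in $V$: it is a strong limit in the ground model, being the supremum of the critical sequence, i.e.\ a limit of inaccessibles of cofinality $\omega$, and the reverse Easton iteration $\mathbb{P}$ is $\lambda$-bounded, so every factor (and every bounded initial segment) has size $<\lambda$ and cannot push any $2^\mu$ with $\mu<\lambda$ up to $\lambda$. Consequently $|V_{j_{0,m}(\rho)+2}|^V<\lambda$ for each $m$.

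Putting the two steps together, the number of open dense subsets of $\mathbb{Q}'$ in $M_\omega$ is at most the number of pairs $(m,d)$ with $m<\omega$ and $d\in V_{j_{0,m}(\rho)+2}$, that is, $\sum_{m<\omega}|V_{j_{0,m}(\rho)+2}|^V$; this is a countable sum of cardinals each below $\lambda$, and since $\cof(\lambda)=\omega$ and $\lambda=j_{0,\omega}(\kappa)=\sup_{m<\omega}j_{0,m}(\kappa)$, the sum is $\le\lambda$. This yields the desired bound. The main obstacle I anticipate is the bookkeeping across the three models: making sure that ``open dense'' is correctly transported by $j_{m,\omega}$ (legitimate only because $j_{m,\omega}$ is fully elementary on $L(V_{\lambda+1})$, by Theorem \ref{iterable}), and that rank and power-set computations genuinely agree between $L(V_{\lambda+1})$, $M_\omega$ and $V$ at the relevant levels; once $\lambda$ is confirmed to remain a strong limit in $V$, the cardinal arithmetic is routine.
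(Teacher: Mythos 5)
Your proof is correct and follows essentially the same route as the paper: you count only the open dense sets lying in $M_\omega$, bound their rank below $j_{0,\omega}(\lambda)$, and observe that the relevant slice of $M_\omega$ has size $\lambda$ in $V$. The only difference is that the paper simply cites $|M_\omega\cap V_{j_{0,\omega}(\lambda)}|=\lambda$, whereas you reprove this fact by unwinding the direct limit and using that $\lambda$ remains a strong limit of cofinality $\omega$ after the $\lambda$-bounded reverse Easton iteration.
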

  \begin{proof}
   As $\mathbb{Q}\in V_\lambda$, there exists $n\in\omega$ such that $\mathbb{Q}\in V_{\kappa_n}$. In particular $j_{0,\omega}(\mathbb{Q})\in M_\omega\cap V_{j_{0,\omega}(\kappa_n)}$ and its dense sets are in $j_{0,\omega}(\mathbb{Q})\in M_\omega\cap V_{j_{0,\omega}(\kappa_{n+1})}$. But $|M_\omega\cap V_{j_{0,\omega}(\lambda)}|=\lambda$. 
  \end{proof}

  \begin{prop}
   There exists a generic ultrafilter $H\in V$ of $j_{0,\omega}(\mathbb{Q})$.
  \end{prop}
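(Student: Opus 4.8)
The plan is to combine the two facts already at our disposal: by the previous Lemma there are, in $V$, only $\lambda$-many open dense subsets of $j_{0,\omega}(\mathbb{Q})$, and, since $\lambda=\sup_n\kappa_n$, we have $\cof(\lambda)=\omega$ in $V$. The $\lambda$-geometric property of $j_{0,\omega}(\mathbb{Q})$ (Definition \ref{geometric}, read in $M_\omega$) lets us absorb any $<\lambda$ many dense sets into a single direct extension, at the cost of requiring the eventual filter to meet all the length sets $E_n=\{p:l(p)>n\}$; and the $E_n$ are met automatically by any filter generated by conditions of unboundedly long length, that is, by an honest Prikry sequence. So I would first enumerate the dense sets as $\langle D_\xi:\xi<\lambda\rangle$, fix an increasing sequence $\langle\mu_n:n\in\omega\rangle$ cofinal in $\lambda$, and split the enumeration into $\omega$ blocks $\mathcal{D}_n=\{D_\xi:\mu_n\leq\xi<\mu_{n+1}\}$, each of size $<\lambda$.

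Next I would recursively build a $\leq^*$-decreasing sequence $\langle q_n:n\in\omega\rangle$: set $q_0=1_{j_{0,\omega}(\mathbb{Q})}$ and, given $q_n$, apply $\lambda$-geometricity to $\mathcal{D}_n$ and $q_n$ to obtain a direct extension $q_{n+1}\leq^* q_n$ such that every filter containing $q_{n+1}$ and meeting all the $E_m$ meets every member of $\mathcal{D}_n$. (For Prikry-like forcing the reduction delivered by $\lambda$-geometricity keeps the stem fixed, exactly as in the proof of Lemma \ref{Prikry}, so it is a genuine $\leq^*$-extension.) Since filters are upward closed, this absorption property passes to every further extension of $q_{n+1}$; hence a single $\leq^*$-lower bound $q_\omega$ of the whole sequence simultaneously absorbs all of the blocks, i.e.\ any filter containing $q_\omega$ that meets every $E_m$ meets every $D_\xi$.

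Granting $q_\omega$, the proposition follows at once: below $q_\omega$ I would build a $\leq$-decreasing sequence $\langle r_k:k\in\omega\rangle$ with $l(r_k)\geq k$, which is possible because each $E_m$ is dense, and let $H$ be the filter it generates. Then $H\in V$, $H$ contains $q_\omega$, and $H$ meets every $E_m$; by the absorption property $H$ therefore meets every $D_\xi$, so $H$ is a generic filter for $j_{0,\omega}(\mathbb{Q})$ lying in $V$, as required.

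The one genuinely nontrivial point, and the step I expect to be the main obstacle, is the existence of the lower bound $q_\omega$. Concretely $q_\omega$ is the fusion $(s,\bigcap_n T^{q_n})$ in the tree case (and $(s,\bigcap_n A_{q_n})$ in the classical case), which is a legitimate condition exactly when a countable intersection of $j_{0,\omega}(U)$-large sets, computed in $V$, is again large. Inside $M_\omega$ this is immediate from the $\lambda$-completeness of $j_{0,\omega}(U)$, but the sequence $\langle q_n\rangle$ is produced in $V$ and need not belong to $M_\omega$, so the naive appeal to $\lambda$-completeness fails; what is really needed is that $j_{0,\omega}(U)$ is countably complete \emph{from the point of view of $V$}. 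I would isolate this as a separate lemma and derive it from the iterability of $j$ of Theorem \ref{iterable}, that is, from the well-foundedness of the iterate $M_{\omega+1}=\Ult(M_\omega,j_{0,\omega}(U))$ in $V$; this $V$-side completeness is precisely the content that Woodin's generic absoluteness for $I0$ supplies, and it is what makes the whole construction go through.
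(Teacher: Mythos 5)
Your overall plan (split the $\lambda$ many dense sets into $\omega$ blocks of size $<\lambda$, absorb each block using $\lambda$-geometricity, then meet the $E_m$'s) is the right one, but the step you yourself flag as the main obstacle --- the existence of a $\leq^*$-lower bound $q_\omega$ of the whole $V$-constructed sequence $\langle q_n:n\in\omega\rangle$ --- is a genuine gap, and the fix you propose does not close it. The sequence $\langle q_n:n\in\omega\rangle$ is built in $V$ using an enumeration of the dense sets that does not exist in $M_\omega$, so while each $q_n$ is a condition (hence an element of $M_\omega$), the sequence itself need not belong to $M_\omega$, and therefore $\bigcap_n T^{q_n}$ need not be an element of $M_\omega$ at all; since $j_{0,\omega}(\mathbb{Q})$ is a forcing notion whose conditions live in $M_\omega$, the candidate fusion $(s,\bigcap_n T^{q_n})$ is then simply not a condition. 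Iterability and the well-foundedness of $M_{\omega+1}=\Ult(M_\omega,j_{0,\omega}(U))$ give you that countable intersections of measure-one sets are \emph{nonempty}; they do not give you that a $V$-indexed countable intersection is again in $M_\omega$, let alone of measure one, and indeed $j_{0,\omega}(U)$ cannot be $\sigma$-complete from the point of view of $V$ in the strong sense you need (its underlying cardinal $\lambda$ has cofinality $\omega$ in $V$).

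The point you are missing is that no such fusion is needed, and this is exactly why Definition \ref{geometric} asks only for a reduction $q\leq p$ rather than $q\leq^*p$. You already observe that the absorption property of $q_{n+1}$ passes to every further extension, because filters are upward closed; this observation makes the $\leq^*$-requirement superfluous. Build instead a single ordinary $\leq$-decreasing chain $r_0\geq r_1\geq\dots$ in which at even stages you apply $\lambda$-geometricity to the next block $\mathcal{D}_n$ below the current condition, and at odd stages you extend into $E_n$ (which is dense since the forcing adds a Prikry sequence). Each finite step is trivially possible, the chain generates a filter $H\in V$ that contains every block-reducing condition and meets every $E_m$, and hence meets every $D_\xi$. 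This is what the paper's proof does: it fixes, for each $n$, a condition $q_n$ absorbing $\langle D_\alpha:\alpha<\kappa_n\rangle$ and then generates the filter from extensions of these lying in the $E_m$'s, with no lower bound of the whole $\omega$-sequence ever required.
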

  \begin{proof}
   Let $\langle D_\alpha:\alpha<\lambda\rangle$ be an enumeration of the dense sets of $j_{0,\omega}(\mathbb{Q})$ in $V$. For every $n\in\omega$, fix $q_n$ that witnesses $\lambda$-geometricness for $\langle D_\alpha:\alpha<\kappa_n\rangle$. Then for every $m$ there exists a $q'_{n,m}<q_n$ such that $q'_{n,m}\in E_m$. Let $H$ be the filter $\bigcup_{n,m\in\omega}{\cal F}_{q'_{n,m}}$, with ${\cal F}_q$ the filter generated by $q$. Then $H$ is generic.
  \end{proof}

  The following appeared in \cite{Woodin} as Theorem 136.

  \begin{teo}[Generic Absoluteness]
  \label{GenericAbsoluteness}
   Let $j:L(V_{\lambda+1})\prec L(V_{\lambda+1})$ with $\crt(j)<\lambda$ be a proper elementary embedding. Let $(M_\omega,j_\omega)$ be the $\omega$-th iterate of $j$ and let $\langle\eta_i:i<\omega\rangle\in V$ be a sequence generic for the Prikry forcing on $\lambda$ in $M_\omega$. 

   Then for all $\alpha<\lambda$ there exists an elementary embedding 
   \begin{equation*}
    \pi:L_\alpha(M_\omega[\langle\eta_n:n\in\omega\rangle]\cap V_{\lambda+1})\prec L_\alpha(V_{\lambda+1})
   \end{equation*}
   such that $\pi\upharpoonright\lambda$ is the identity. 
  \end{teo}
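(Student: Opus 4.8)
The plan is to first pin down what $\pi$ can possibly be, and thereby reduce the statement to a reflection property. Since $M_\omega[\langle\eta_n:n\in\omega\rangle]\subseteq V$ and Prikry forcing adds no bounded subsets of $\lambda$, we have $V_\lambda^{M_\omega[\langle\eta_n:n\in\omega\rangle]}=V_\lambda$, whence $A:=M_\omega[\langle\eta_n:n\in\omega\rangle]\cap V_{\lambda+1}\subseteq V_{\lambda+1}=:B$ literally, as a family of subsets of $V_\lambda$. I would then observe that any elementary $\pi:L_\alpha(A)\prec L_\alpha(B)$ with $\pi\upharpoonright\lambda=\mathrm{id}$ is forced to fix $\lambda$ (it is order preserving and $\pi(\lambda)\in V_{\lambda+1}$ is an ordinal, so $\lambda=\sup\pi''\lambda\le\pi(\lambda)\le\lambda$), hence to fix $V_\lambda$ pointwise, and therefore to restrict to the inclusion on $A$. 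So the genuine content is the reflection
\begin{equation*}
 L_\alpha(A)\models\varphi(\bar a)\iff L_\alpha(B)\models\varphi(\bar a)
\end{equation*}
for every formula $\varphi$ and every tuple $\bar a\in A$, the map $\pi$ being the canonical term map that this equivalence renders well defined and elementary. A useful reframing: since $M_n=L(V_{\lambda+1})$ for every $n<\omega$, the right-hand structure equals $L_\alpha(V_{\lambda+1})^{M_n}$ for each finite $n$, which is exactly what lets the iteration be brought to bear.

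Next I would exploit the direct limit structure of $M_\omega$. Every element of $M_\omega$ comes from a finite stage, i.e.\ is $j_{n,\omega}(y)$ for some $n$ and some $y\in M_n=L(V_{\lambda+1})$; concretely, as with iterated ultrapowers, each $j_{0,\omega}(\mathbb{P})$-name $\tau$ for a subset of $V_\lambda$ can be written $j_{0,\omega}(F)(\kappa_{i_1},\dots,\kappa_{i_k})$ for a function $F\in L(V_{\lambda+1})$ on $[\kappa]^k$ and finitely many members of the critical sequence. Thus each $x=\tau^{\langle\eta_n:n\in\omega\rangle}\in A$ is coded by a ground-model function $F$, the critical sequence, and the generic. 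I would define $\pi(x)$ by the corresponding term computed back in $L(V_{\lambda+1})$, the construction being a $j^+$-style union along the critical sequence, exactly as $j^+$ was assembled from $j\upharpoonright V_\lambda$ in the preliminaries; the generic $\langle\eta_n:n\in\omega\rangle$ enters only to evaluate the names, and weak homogeneity of Prikry forcing should guarantee that the resulting truth values are insensitive to which generic in $V$ (produced by the preceding Proposition) is chosen.

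The elementarity of $\pi$ I would prove by induction on $\alpha<\lambda$, at each level transferring a statement about $L_\alpha(A)$ evaluated inside $M_\omega[\langle\eta_n:n\in\omega\rangle]$ to a forcing statement over $M_\omega$, reflecting it through the tail embedding $j_{n,\omega}:M_n\prec M_\omega$ for $n$ chosen large enough that the names representing $\bar a$ already live in $M_n$, and reading the answer off in $M_n=L(V_{\lambda+1})=L_\alpha(V_{\lambda+1})^{M_n}$. Here the iterability of $j$ from Theorem \ref{iterable} is essential: it is what makes every $j_{n,\omega}$ a genuine elementary embedding of $L(V_{\lambda+1})$, so that the reflection is available uniformly and the $\Sigma_n$-correctness needed to push $\varphi$ through for arbitrarily complex $\varphi$ is guaranteed, in the spirit of the $\varphi_n$ of Remark \ref{I1}. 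Because each $L_\alpha(V_{\lambda+1})$ with $\alpha<\lambda$ is a set, the term and Skolem-hull bookkeeping is unproblematic, and coherence of the $\pi$'s across different $\alpha$ is automatic, since they are all instances of the same term map.

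The step I expect to be the main obstacle is precisely the well-definedness and elementarity of this term map in the presence of an \emph{arbitrary} generic $\langle\eta_n:n\in\omega\rangle\in V$: one must show both that the truth of a forced statement over $M_\omega$ does not depend on the generic realizing it, and that pulling the statement back through $j_{n,\omega}$ lands it in the forcing-free structure $L_\alpha(V_{\lambda+1})^{M_n}$ rather than in some Prikry extension of it. Controlling this interaction between the genericity of $\langle\eta_n:n\in\omega\rangle$, the homogeneity of the forcing, and the tail of the iteration — in effect, showing that ``what the Prikry generic adds over $M_\omega$'' corresponds under $j_{n,\omega}$ to data already present in $L(V_{\lambda+1})$ — is the technical heart of the argument, and everything else is bookkeeping around it.
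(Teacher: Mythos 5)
The first thing to note is that the paper does not prove this statement at all: it is quoted verbatim as Theorem 136 of Woodin's \emph{Suitable extender models II} and used as a black box, so the only meaningful comparison is with Woodin's own (long and delicate) argument. Your preliminary reductions are sound and match the standard setup: since Prikry forcing on $\lambda$ over $M_\omega$ adds no bounded subsets of $\lambda$ and $V_\lambda\subseteq M_\omega$, any $\pi$ as in the statement must restrict to the inclusion on $A=M_\omega[\langle\eta_n:n\in\omega\rangle]\cap V_{\lambda+1}$, so the theorem is really the assertion that the inclusion $A\subseteq V_{\lambda+1}$ induces elementary maps $L_\alpha(A)\prec L_\alpha(V_{\lambda+1})$. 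The representation of elements of $A$ as terms $j_{0,\omega}(F)(\kappa_{i_1},\dots,\kappa_{i_k})$ evaluated at the generic, and the appeal to weak homogeneity to remove the dependence on the particular generic $\langle\eta_n:n\in\omega\rangle\in V$, are also the right ingredients.

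However, the proposal stops exactly where the theorem begins. By your own account the ``main obstacle'' --- well-definedness and elementarity of the term map --- is not addressed, and the one concrete mechanism you offer for it does not work as stated: you propose to reflect a statement about $L_\alpha(A)$ through the tail embedding $j_{n,\omega}:M_n\prec M_\omega$ and ``read the answer off in $M_n=L(V_{\lambda+1})$''. But $j_{n,\omega}$ does not fix $\lambda$: it sends $\kappa_n$ to $\lambda$ and $\lambda$ to $j_{n,\omega}(\lambda)>\lambda$, so the $j_{n,\omega}$-preimage of a statement about $V_{\lambda+1}\cap M_\omega$ is a statement about $V_{\kappa_n+1}$ inside $L(V_{\lambda+1})$, not a statement about $V_{\lambda+1}$ itself. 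Converting facts about $L_{\bar\alpha}(V_{\kappa_n+1})^{L(V_{\lambda+1})}$ into facts about $L_\alpha(V_{\lambda+1})$ is precisely the content of generic absoluteness, and it is at this point that Woodin's proof uses the hypothesis that $j$ is \emph{proper} (a hypothesis your argument never invokes) together with a genuinely nontrivial analysis of the ultrafilter $U_j$ and of $M_\omega[\langle\kappa_n:n\in\omega\rangle]\cap V_{\lambda+1}$; none of this is supplied, and it is not ``bookkeeping''. As it stands the proposal is a correct framing of the problem plus an acknowledgement that the hard part remains open, so it cannot be accepted as a proof; for the purposes of this paper the honest move is the one the authors make, namely to cite Woodin's theorem.
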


  In particular, as I1($\lambda$) holds in $L_1(V_{\lambda+1})$, $M_\omega[\langle\eta_n:n\in\omega\rangle]\vDash I1(\lambda)$.

  \begin{proof}[Proof of Theorem \ref{general}]
	 Let $H\in V$ be $j_{0,\omega}(\mathbb{P})$-generic. Fix $g$, one of the Prikry sequences added by $j_{0,\omega}(\mathbb{Q})$. Then $M_\omega\subseteq M_\omega[g]\subseteq M_\omega[H]\subseteq V$. But $M_\omega[g]$ satisfies Generic absoluteness' conditions, therefore $M_\omega[g]\vDash I1(\lambda)$. But also $V\vDash I1(\lambda)$, therefore it must be $M_\omega[H]\vDash I1(\lambda)$: As, by \ref{GenericAbsoluteness}, $M_\omega[g]\cap V_{\lambda+1}\prec V_{\lambda+1}$, it is immediate to see that for any formula $\varphi$ and for any $a\in M_\omega[g]$, $M_\omega[g]\cap V_{\lambda+1}\vDash\varphi(a)$ iff $M_\omega[H]\cap V_{\lambda+1}\vDash\varphi(a)$ iff $V_{\lambda+1}\vDash\varphi(a)$. But this is the situation of Remark \ref{I1}, therefore $M_\omega[H]\vDash I1(\lambda)$.
		
	 We just proved that 
	 \begin{equation*}
    M_\omega\vDash\exists p\in j_{0,\omega}(\mathbb{Q})\ p\Vdash_{j_{0,\omega}(\mathbb{Q})}\exists i:(V_{\check{\lambda}+1})\prec (V_{\check{\lambda}+1}) ,
   \end{equation*}
   Applying $j^{-1}$, we have that 
   \begin{equation*}
    L(V_{\lambda+1})\vDash\exists p\in\mathbb{Q}\ p\Vdash_{\mathbb{Q}}\exists i:(V_{\check{\kappa}+1})\prec (V_{\check{\kappa}+1}),
   \end{equation*}
   as we wanted to prove.
  \end{proof}

 \section{Extender-based Prikry forcing}
  The first application of $\kappa$-geometricness will be on the extender-based Prikry forcing. It was introduced by Gitik and Magidor, and the reader can find an exhaustive description in \cite{Gitik}. The aim of the forcing is to add many Prikry sequences to a strong enough cardinal, blowing up its power while not changing the power function below it. This is more difficult than just having $\lambda$ singular and $2^\lambda>\lambda^+$: the proof for this is to take $\lambda$ measurable, forcing $2^\lambda>\lambda^+$ and then adding a Prikry sequence to $\lambda$. But Dana Scott \cite{Scott} proved that if $\lambda$ is measurable and $2^\lambda>\lambda^+$, then for a measure one set below $\lambda$, $2^\kappa>\kappa^+$, therefore this method would not give the first failure of GCH on $\lambda$. The solution is to exploit the extender structure of the cardinal to add many Prikry sequences, at the same time blowing up the power and changing the cofinality.
  
  \begin{defin}
   Let $\kappa$ and $\gamma$ be cardinal. Then $\kappa$ is $\gamma$-strong iff there is a $j:V\prec M$ such that $\crt(j)=\kappa$, $\gamma<j(\kappa)$ and $V_{\kappa+\gamma}\subseteq M$.
  \end{defin}
  
  We write the definition as it is in \cite{Gitik}.

  Suppose \GCH, and let $\lambda$ be a $2$-strong cardinal.
  
  For any $\alpha<\lambda^{++}$, define a $\lambda$-complete normal ultrafilter on $\lambda$ as $X\in U_\alpha$ iff $\alpha\in j(X)$. For any $\alpha,\beta<\lambda^{++}$, define $\alpha\leq_E\beta$ iff $\alpha\leq\beta$ and for some $f\in^\lambda\lambda$, $j(f)(\beta)=\alpha$. Then by a result in \cite{Gitik}, $\langle\lambda^{++},\leq\rangle$ is a $\lambda^{++}$-directed order, and there exists $\langle\pi_{\alpha\beta}:\alpha,\beta\in\lambda^{++},\alpha\leq_E\beta\rangle$ such that $\langle\lambda^{++}, \langle U_\alpha:\alpha<\lambda^{++}\rangle, \leq_E\rangle$ is a nice system. There is no need to define a nice system here, the term is introduced only because the extender-based Prikry forcing is built on a nice system, the full definition can be found in \cite{Gitik}.

  Fix a nice system $\langle\lambda^{++}, \langle U_\alpha:\alpha<\lambda^{++}\rangle, \leq_E\rangle$. For any $\nu<\lambda$ and $\lambda<\alpha<\lambda^{++}$, let us denote $\pi_{\alpha,0}(\nu)$ by $\nu^{\alpha,0}$. We will write just $\nu^0$ when $\alpha$ is obvious. By a \textdegree-increasing sequence of ordinals we mean a sequence $\langle\nu_0,\dots,\nu_n\rangle$ of ordinals below $\lambda$ such that $\nu_0^0<\dots<\nu_n^0$. We say that $\mu$ is permitted for $\langle\nu_0,\dots,\nu_n\rangle$ iff  $\mu^0>\nu_i^0$ for all $i=0\dots n$. 

  \begin{defin}
  \label{P}
   The set of forcing conditions $\mathbb{P}$ consists of all the elements $p$ of the form 
   \begin{equation*}
    \{\langle\gamma,p^{\gamma}\rangle | \gamma\in g\setminus\{\max(g)\}\}\cup\{\langle\max(g),p^{\max(g)},T\rangle\},
   \end{equation*}
   where
   \begin{enumerate}
    \item $g\subseteq\lambda^{++}$ of cardinality $\leq\lambda$ which has a maximal element according to $\leq_E$ and $0\in g$. 
	  \item for $\gamma\in g$, $p^\gamma$ is a finite \textdegree-increasing sequence of ordinals $<\lambda$.
	  \item $T$ is a tree, with a trunk $p^{\max(g)}$, consisting of \textdegree-increasing sequences. All the splittings in $T$ are required to be on sets in $U_{\max(g)}$, i.e., for every $\eta\in T$, if $\eta\geq p^{\max(g)}$ then the set 
     \begin{equation*}
      \Suc_T(\eta)=\{\mu<\lambda:\eta^\smallfrown\langle\mu\rangle\in T\}\in U_{\max(g)}.
     \end{equation*}
	   Also require that for $\eta_1\geq_T\eta_2\geq_T p^{mc}$, $\Suc_T(\eta_1)\subseteq\Suc_T(\eta_2)$.
	  \item For every $\mu\in\Suc_T(p^{\max(g)})$, $|\{\gamma\in g:\mu$ is permitted for $p^{\gamma}\}|\leq\mu^0$.
	  \item For every $\gamma\in g$, $\pi_{\max(g),\gamma}(\max(p^{\max(g)}))$ is not permitted for $p^\gamma$.
	  \item $\pi_{\max(g),0}$ projects $p^{\max(g)}$ onto $p^0$ (so $p^{\max(g)}$ and $p^0$ are of the same length).
   \end{enumerate}
  \end{defin}

  Let us denote $g$ by $\supp(p)$, $\max(g)$ by $\mc(p)$, $T$ by $T^p$, $p^{\max(g)}$ by $p^{\mc}$ and $\bas(p)=p\upharpoonright(\supp(p)\setminus\mc(p))$.

  \begin{defin}
  \label{less}
   Let $p,q\in\mathbb{P}$. We say that $p$ extends $q$ and denote this by $p<q$ iff
	 \begin{enumerate}
	  \item $\supp(p)\supseteq\supp(q)$.
	  \item For every $\gamma\in\supp(q)$, $p^\gamma$ is an end-extension of $q^\gamma$.
	  \item $p^{\mc(q)}\in T^q$.
	  \item For every $\gamma\in\supp(q)$, 
     \begin{equation*}
      p^\gamma\setminus q^\gamma=\pi_{\mc(q),\gamma}[(p^{\mc(q)}\setminus q^{\mc(q)})\upharpoonright(\lh(p^{\mc})\setminus (i+1))], 
     \end{equation*}
     where $i\in\dom(p^{\mc(q)})$ is the largest such that $p^{\mc(q)}(i)$ is not permitted for $q^\gamma$.
	  \item $\pi_{\mc(p),\mc(q)}$ projects $T^p_{p^{\mc}}$ into $T^q_{q^{\mc}}$.
	  \item For every $\gamma\in\supp(q)$ and $\mu\in\Suc_{T^p}(p^{\mc})$, if $\mu$ is permitted for $p^\gamma$, then $\pi_{\mc(p),\gamma}(\mu)=\pi_{\mc(q),\gamma}(\pi_{\mc(p),\mc(q)}(\mu))$.
	 \end{enumerate}
  \end{defin}

  \begin{defin}
   Let $p,q\in\mathbb{P}$. We say that $p$ is a direct extension of $q$ and denot this by $p<^* q$ iff
	 \begin{enumerate}
	  \item $p<q$
	  \item for every $\gamma\in\supp(q)$, $p^\gamma=q^\gamma$.
	 \end{enumerate}
  \end{defin}
  
  \begin{lem}[\cite{Gitik}]
   $\leq^*$ is $\lambda$-complete.
  \end{lem}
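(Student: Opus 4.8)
The plan is to take a $\leq^*$-decreasing sequence $\langle p_\alpha:\alpha<\delta\rangle$ with $\delta<\lambda$ and construct a lower bound $p$ directly (the argument for a $\leq^*$-directed family of size $<\lambda$ is identical). The construction has three ingredients --- the support, the stems, and the tree --- and the only place where the $\lambda$-completeness of the measures $U_\alpha$ is used is in building the tree.

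First, the support and the stems. Since $p_{\alpha+1}<^* p_\alpha$ forces $\supp(p_{\alpha+1})\supseteq\supp(p_\alpha)$ and $p_{\alpha+1}^\gamma=p_\alpha^\gamma$ for every $\gamma\in\supp(p_\alpha)$, the supports increase and the stems stabilize: I set $g_0=\bigcup_{\alpha<\delta}\supp(p_\alpha)$, which has cardinality $\leq\lambda$ since $\delta<\lambda$ and each support has size $\leq\lambda$, and for each $\gamma\in g_0$ I let $p^\gamma$ be the common value of $p_\alpha^\gamma$ over all $\alpha$ with $\gamma\in\supp(p_\alpha)$. The maximal coordinates $\mc(p_\alpha)$ need not stabilize, so I invoke the $\lambda^{++}$-directedness of $\leq_E$ to pick a single $\gamma^*\in\lambda^{++}$ with $\gamma^*\geq_E\gamma$ for every $\gamma\in g_0$, and I put $g=g_0\cup\{\gamma^*\}$; this is the support of $p$, with $\mc(p)=\gamma^*$. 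Note that the stem-length $n$ is constant along the sequence, because $0\in\supp(p_\alpha)$ always and $p_\alpha^0$ is preserved by direct extensions, so every $p_\alpha^{\mc(p_\alpha)}$ has length $n$ by clause (6) of Definition \ref{P}. I define $p^{\gamma^*}$ to be the sequence of length $n$ that $\pi_{\gamma^*,\mc(p_\alpha)}$-projects onto each $p^{\mc(p_\alpha)}$ (these project-coherently by clauses (4) and (6) of Definition \ref{less}) and hence, composing projections, onto $p^0$, so that clause (6) of Definition \ref{P} holds for $p$.

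Second, the tree. I build $T=T^p$ at the coordinate $\gamma^*$ level by level, choosing at each node $\eta\geq_T p^{\gamma^*}$ a set $\Suc_T(\eta)\in U_{\gamma^*}$ subject to two kinds of constraint: (i) for each $\alpha<\delta$ the projection $\pi_{\gamma^*,\mc(p_\alpha)}$ must send $T$ into $T^{p_\alpha}$ and respect the coherence clauses (5)--(7) of Definition \ref{less}, and (ii) the counting clause (4) of Definition \ref{P} must hold, i.e. $|\{\gamma\in g:\mu\text{ permitted for }p^\gamma\}|\leq\mu^0$ for each immediate successor $\mu$. Each requirement in (i) is a measure-one condition on $\Suc_T(\eta)$, since the preimage under a projection of a $U_{\mc(p_\alpha)}$-set lies in $U_{\gamma^*}$, and (ii) is handled by discarding the measure-zero set of $\mu$ whose $\mu^0$ is too small to accommodate the enlarged support $g$. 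As $\delta<\lambda$ and $U_{\gamma^*}$ is $\lambda$-complete, the intersection of all these $\leq\lambda$-many measure-one sets is again in $U_{\gamma^*}$, so a legitimate choice of $\Suc_T(\eta)$ exists; carrying this out while keeping $\Suc_T(\eta_1)\subseteq\Suc_T(\eta_2)$ for $\eta_1\geq_T\eta_2$, as clause (3) of Definition \ref{P} demands, produces $T$.

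Finally, I verify that $p=\{\langle\gamma,p^\gamma\rangle:\gamma\in g\setminus\{\gamma^*\}\}\cup\{\langle\gamma^*,p^{\gamma^*},T\rangle\}$ is a condition --- clauses (1)--(6) of Definition \ref{P} hold by the choices above --- and that $p<^* p_\alpha$ for every $\alpha$: clauses (1)--(4) of Definition \ref{less} are immediate from $g\supseteq\supp(p_\alpha)$ together with the preservation of stems (so $p^{\mc(p_\alpha)}=p_\alpha^{\mc(p_\alpha)}\in T^{p_\alpha}$), clauses (5)--(7) are exactly the measure-one requirements imposed in the tree construction, and the defining clause (2) of $<^*$ (equality of stems on $\supp(p_\alpha)$) holds by construction. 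I expect the main obstacle to be not the completeness argument, which is routine, but the bookkeeping with the projection maps $\pi_{\alpha\beta}$ of the nice system: one must ensure both that a single trunk $p^{\gamma^*}$ projects coherently onto all the varying $p^{\mc(p_\alpha)}$ and that the one tree built at $\gamma^*$ projects simultaneously onto every $T^{p_\alpha}$, while the permitting/counting clause (4) still survives the passage to the larger support $g$.
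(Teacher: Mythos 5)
The paper does not actually prove this lemma --- it is quoted from \cite{Gitik} as a black box, just as the nice-system machinery is --- so your attempt can only be measured against the standard Gitik--Magidor argument. Your architecture matches it: union the supports, use the $\lambda^{++}$-directedness of $\leq_E$ to find a new maximal coordinate $\gamma^*$, lift the stems, and build the tree at $\gamma^*$ by intersecting pullbacks. But there is a genuine gap at the step you call routine. You justify ``the intersection of all these $\leq\lambda$-many measure-one sets is again in $U_{\gamma^*}$'' by $\lambda$-completeness, and a nonprincipal $\lambda$-complete ultrafilter on $\lambda$ is closed only under intersections of \emph{fewer than} $\lambda$ sets. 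Pulling back the $\delta<\lambda$ trees $T^{p_\alpha}$ is fine, but clause (6) of Definition \ref{less} demands the commutativity $\pi_{\gamma^*,\gamma}(\mu)=\pi_{\mc(p_\alpha),\gamma}(\pi_{\gamma^*,\mc(p_\alpha)}(\mu))$ for every $\gamma$ in a support of size up to $\lambda$, so the requirements you must meet are genuinely $\lambda$-many and cannot be secured by brute-force intersection. In the actual proof this is exactly where the counting clause (4) of Definition \ref{P} and the coherence axioms of the nice system earn their keep: commutativity is only demanded for $\mu$ permitted for $p^\gamma$, and clause (4) caps the number of active coordinates for each $\mu$ at $\mu^0<\lambda$, so the obstruction is absorbed coordinate-by-coordinate rather than by one global intersection. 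As written, your proof silently replaces that mechanism with an intersection the completeness hypothesis does not license.

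A second, smaller gap of the same flavor: the existence of a single trunk $p^{\gamma^*}$ of length $n$ that $\pi_{\gamma^*,\mc(p_\alpha)}$-projects onto every $p_\alpha^{\mc(p_\alpha)}$ simultaneously does not follow from clauses (4) and (6) of Definition \ref{less}, which only relate coordinates already present inside each $p_\alpha$; it is a separate property of the nice system (coherent liftability along $\leq_E$). Since the paper deliberately declines to define nice systems, you could not have known exactly which axioms are available, but the proof should isolate these two uses of the nice system as the non-routine content instead of folding them into bookkeeping; everything else in your write-up (stabilization of stems, size of the union of supports, discarding the null set of $\mu$ with $\mu^0$ too small for the counting clause) is correct and is how the cited proof goes.
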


  We need to define what an $n$-extension is for this forcing:
  
  \begin{defin}
   Let $p\in\mathbb{P}$ and $t\in T^p_{p^{\mc}}$. Then $p\oplus t$ is defined as follows:
	 \begin{enumerate}
	  \item $\supp(p\oplus t)=\supp(p)$;
	  \item $(p\oplus t)^{\mc}={p^{\mc}}^\smallfrown t$;
	  \item $T^{p\oplus t}=\{s\in T^p: s\subseteq (p\oplus t)^{\mc}\vee (p\oplus t)^{\mc}\subseteq s\}$;
	  \item if $\gamma\in\supp(p)$, 
     \begin{equation*}
      (p\oplus t)^\gamma={p^{\gamma}}^\smallfrown\pi_{\mc(p),\gamma}[t\upharpoonright(\lh(t)\setminus (i_\gamma+1))], 
     \end{equation*}
    where $i_\gamma$ is the largest such that $t(i)$ is not permitted by $p^\gamma$.
   \end{enumerate}
  \end{defin}

  A condition in $\mathbb{P}$ is therefore a set of finite sequences and $T$ indicating the possible extensions not only of the last one, but, via projection, of all of them. Morally speaking, $p\oplus t$ is the largest extension of $p$ that we can have choosing $t$ (and its projections) as extension. 

  \begin{teo}[Gitik, Magidor]
   Let $\mathbb{P}$ as above. Then 
   \begin{equation*}
    V^{\mathbb{P}}\vDash 2^\lambda=\lambda^{++}\ \wedge\ \forall\kappa<\lambda\ 2^\kappa=\kappa^+
   \end{equation*}
  \end{teo}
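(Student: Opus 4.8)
The plan is to verify the two combinatorial properties of $\mathbb{P}$ that do all the work, namely the \emph{Prikry property} (every sentence of the forcing language is decided by a direct extension) and the \emph{$\lambda^{++}$-chain condition}, and then to read off the two cardinal-arithmetic conclusions from them together with the stated $\lambda$-completeness of $\leq^*$. Concretely, I would first prove that for every $p\in\mathbb{P}$ and every sentence $\sigma$ there is $q\leq^* p$ deciding $\sigma$; this is the analogue of the $*$-Prikry condition isolated in Lemma \ref{*tree}, but now carried out for the nice-system structure. Granting this, the argument splits into the behaviour below $\lambda$ and the behaviour at and above $\lambda$.

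For the conclusion below $\lambda$, the Prikry property together with the $\lambda$-completeness of $\leq^*$ shows that $\mathbb{P}$ \emph{adds no bounded subsets of $\lambda$}. Given a name $\dot x$ for a subset of some $\kappa<\lambda$ and a condition $p$, I would use the Prikry property to decide ``$\check\xi\in\dot x$'' by a direct extension for each $\xi<\kappa$, and then fuse these $\kappa<\lambda$ many direct extensions into a single $q\leq^* p$ by $\lambda$-completeness; this $q$ forces $\dot x$ to equal a ground-model set. Hence $P(\kappa)^{V^{\mathbb{P}}}=P(\kappa)^V$ for every $\kappa<\lambda$, so no cardinal $\leq\lambda$ is collapsed and $\GCH$ below $\lambda$ is untouched, giving $\forall\kappa<\lambda\ 2^\kappa=\kappa^+$. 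The generic also produces an $\omega$-sequence cofinal in $\lambda$ (the trunk at the $\mc$-coordinate grows unboundedly), so $\cof(\lambda)=\omega$ in the extension while $\lambda$ itself survives as a cardinal.

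For the conclusion at $\lambda$, I would establish that $\mathbb{P}$ is $\lambda^{++}$-c.c.\ by a $\Delta$-system argument exploiting $\GCH$. The point is that two conditions sharing the same finite trunk data are compatible, since their supports and trees can be amalgamated over a common $\leq_E$-upper bound (using the $\lambda^{++}$-directedness of $\leq_E$ and the $\lambda$-completeness of the measures to keep the intersected tree legal); thinning a putative antichain of size $\lambda^{++}$ to a $\Delta$-system on supports (possible since $(\lambda^+)^{\lambda}=\lambda^+<\lambda^{++}$ under $\GCH$) then forces a compatible pair. The chain condition preserves all cardinals $\geq\lambda^{++}$ and, via the usual nice-name count, there are at most $(\lambda^{++})^{\lambda}=\lambda^{++}$ nice names for subsets of $\lambda$, yielding $2^\lambda\le\lambda^{++}$. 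Preservation of the intermediate cardinal $\lambda^+$ is obtained separately from the Prikry property and $\lambda$-completeness of $\leq^*$, which prevent any new short sequence from witnessing a collapse. For the matching lower bound I would observe that for each $\alpha<\lambda^{++}$ the generic determines a Prikry sequence $f_\alpha\colon\omega\to\lambda$ via the projections $\pi_{\mc,\alpha}$, and that for $\alpha\neq\beta$ the nice-system structure makes it dense to separate $f_\alpha$ from $f_\beta$; thus the extension contains $\lambda^{++}$ distinct subsets of $\lambda$, so $2^\lambda\ge\lambda^{++}$. Combining the two bounds gives $2^\lambda=\lambda^{++}$.

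The main obstacle is the Prikry property itself. Unlike the single-measure case sketched in Lemma \ref{*tree}, here the tree-shrinking must be performed simultaneously over a support of size up to $\lambda$ while respecting the commuting projections $\pi_{\alpha\beta}$ and the coherence requirement $\Suc_{T}(\eta_1)\subseteq\Suc_{T}(\eta_2)$; one must diagonalise the measure-one thinning across all coordinates at once and then use the $\lambda$-completeness of $\leq^*$ to fuse the stages, which is exactly where the delicate combinatorics of the nice system enters. Once this is in hand, the cardinal-arithmetic bookkeeping above is routine $\GCH$ counting.
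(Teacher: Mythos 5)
The paper does not prove this theorem at all: it is quoted as a black-box result of Gitik and Magidor, with the reader referred to \cite{Gitik} for the construction and its properties, so there is no in-paper argument to compare yours against. Measured against the literature proof, your outline is the standard architecture and it is essentially right: the Prikry property plus $\lambda$-completeness of $\leq^*$ gives no new bounded subsets of $\lambda$ (hence \GCH{} below $\lambda$ is untouched and all cardinals $\leq\lambda$ survive), a $\Delta$-system argument under \GCH{} gives the $\lambda^{++}$-c.c.\ (hence preservation from $\lambda^{++}$ upward and the nice-name bound $2^\lambda\leq\lambda^{++}$), and the $\lambda^{++}$ pairwise distinct Prikry sequences read off through the projections $\pi_{\alpha,\beta}$ give the matching lower bound. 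Two places deserve more care than your sketch gives them. First, $\lambda^+$ sits in the gap between your two preservation mechanisms, and ``preventing new short sequences'' is not by itself an argument, since a collapsing surjection would have domain $\lambda$, not a bounded ordinal; the standard way to close this is the cofinality trick: if $\lambda^+$ were collapsed then, $\lambda$ having acquired cofinality $\omega$ while remaining a cardinal, some $\mu<\lambda$ would carry a cofinal map $\mu\to\lambda^+$, and deciding its $\mu$ many values by direct extensions fused via $\lambda$-completeness of $\leq^*$ traps its range in a ground-model set of size $\leq\lambda$, contradicting the regularity of $\lambda^+$ in $V$. Second, as you yourself flag, the entire weight of the theorem rests on the Prikry property for the nice-system forcing, which you defer; this is exactly the ingredient the paper also imports from the literature (in the sharper $*$-Prikry form due to Merimovich) when it proves $\lambda$-geometricity in the proposition immediately following this theorem, so your division of labour matches the paper's, but your proposal should be read as a correct skeleton rather than a proof.
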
 

  In spirit, Gitik-Magidor extender Prikry forcing is a tree Prikry forcing, therefore the proof for the *-Prikry condition is, in spirit, the same of Lemma \ref{*tree}. In practice, it is much more complex, and the proof of that goes back to Merimovich (\cite{Merimovich}). In this case, it has this form: for every $p\in\mathbb{P}$ and $D\subseteq\mathbb{P}$ dense there exist $q\leq^+p$ and $n\in\omega$ such that for any $t\in T^q$, $\lh(t)=n$, $q\oplus t\in D$.
  
  It is tempting then to do as Lemma \ref{Prikry}, i.e., building a descending sequence of conditions that satisfy the *-Prikry condition. But there is a technical difference: in \ref{Prikry} (using the same notation), a $n_\beta$-extension of $q_\alpha$ was also a $n_\beta$ extension of $q_\beta$, and therefore in $D_\beta$; in this case, a $n_\beta$-extension of $q_\alpha$ can possibly not be a $n_\beta$ extension of $q_\beta$, for example when $q_\beta^{mc}\neq q_\alpha^{mc}$. Therefore we have to check that everything projects smoothly.
  
  \begin{prop}
   Let $\mathbb{P}$ as above. Then $\mathbb{P}$ is $\lambda$-geometric.
  \end{prop}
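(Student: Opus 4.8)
The plan is to adapt the proof of Lemma \ref{Prikry}, taking as length measure $l(p)=\lh(p^{\mc})$, the length of the maximal-coordinate stem of $p$. First I would verify that $l$ is a length measure in the sense of Definition \ref{geometric}. Clearly $l(1_{\mathbb{P}})=0$. For monotonicity one cannot directly compare $p^{\mc(p)}$ with $q^{\mc(q)}$ when $p<q$, since enlarging the support (clause (1) of Definition \ref{less}) may push $\mc(p)$ strictly above $\mc(q)$ in $\leq_E$; instead I use that, by clause (6) of Definition \ref{P}, $l(r)=\lh(r^0)$ for every $r$, while clause (2) of Definition \ref{less} gives $q^0\subseteq p^0$, whence $l(q)=\lh(q^0)\leq\lh(p^0)=l(p)$. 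The same computation shows that a direct extension never alters the length, so the whole construction will stay inside a single level. Finally the sets $E_n=\{p:l(p)>n\}$ are open (downward closed, by the monotonicity just proved) and dense (every stem can be lengthened, as the trees split on $U_{\mc}$-large sets).

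Next I would copy the inductive construction of Lemma \ref{Prikry}. Given $p$ and open dense sets $\langle D_\beta:\beta<\alpha\rangle$ with $\alpha<\lambda$, I build a $\leq^*$-decreasing sequence $\langle q_\beta:\beta\leq\alpha\rangle$ with $q_0=p$. At a successor step I apply the $*$-Prikry condition, in the Merimovich form recalled before the statement, to $q_\beta$ and $D_\beta$, obtaining a direct extension $q_{\beta+1}\leq^* q_\beta$ and an integer $n_{\beta+1}$ such that $q_{\beta+1}\oplus t\in D_\beta$ for every $t\in T^{q_{\beta+1}}_{q_{\beta+1}^{\mc}}$ with $\lh(t)=n_{\beta+1}$. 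At a limit step I invoke the $\lambda$-completeness of $\leq^*$ to pick a direct lower bound; this is legitimate because $\alpha<\lambda$ and every support has size $\leq\lambda$. As every step is a direct extension, all the $q_\beta$ share the common length $l_0:=l(p)$. I then set $q:=q_\alpha$.

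It remains to check that $q_\alpha$ works. Let $H$ be a filter with $q_\alpha\in H$ meeting every $E_n$, and fix $\beta<\alpha$; I must produce an element of $H\cap D_\beta$. Since $H$ is directed and meets a sufficiently high $E_n$, it contains some $q'\leq q_\alpha$ with $l(q')$ as large as needed; as $q'\leq q_\alpha\leq^* q_{\beta+1}$ we have $\mc(q_\alpha)\geq_E\mc(q_{\beta+1})$. By clause (3) of Definition \ref{less} the branch of $T^{q_\alpha}$ through $q_\alpha^{\mc}$ recorded in $q'$ has length $\geq l_0+n_{\beta+1}$; let $t$ be its first $n_{\beta+1}$ entries above the stem, so $q'\leq q_\alpha\oplus t$, and set $\bar t=\pi_{\mc(q_\alpha),\mc(q_{\beta+1})}[t]$, which — the projection between maximal coordinates being level-preserving on the trees by clause (5) of Definition \ref{less} — is a length-$n_{\beta+1}$ extension of $q_{\beta+1}^{\mc}$. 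The heart of the proof, and exactly the point flagged in the paragraph preceding the statement, is the inequality
\begin{equation*}
 q_\alpha\oplus t\leq q_{\beta+1}\oplus\bar t .
\end{equation*}
Once this is established, $q_{\beta+1}\oplus\bar t\in D_\beta$ by the choice of $n_{\beta+1}$, so $q_\alpha\oplus t\in D_\beta$ and hence $q'\in D_\beta$ by openness; as $q'\in H$ we conclude $H\cap D_\beta\neq\emptyset$.

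I expect the displayed inequality to be the main obstacle. Unlike the situation of Lemma \ref{Prikry}, the extension $q_\alpha\oplus t$ is taken at the coordinate $\mc(q_\alpha)$, which the construction may have pushed strictly above $\mc(q_{\beta+1})$, so $q_\alpha\oplus t$ need not be a direct $n_{\beta+1}$-extension of $q_{\beta+1}$. Establishing the inequality therefore amounts to checking that the non-maximal coordinates and the trees of the two conditions project coherently: clause (5) of Definition \ref{less} matches the trees, clause (6) guarantees that $\pi_{\mc(q_\alpha),\gamma}$ evaluated on the new successors coincides with first projecting to $\mc(q_{\beta+1})$ and then to $\gamma$, and clause (4) then forces the $\gamma$-coordinates of $q_\alpha\oplus t$ to be end-extensions of those of $q_{\beta+1}\oplus\bar t$. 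Verifying these commutations, together with controlling the projected length through the permitting clauses (4) and (5) of Definition \ref{P} — the content of ``everything projects smoothly'' — uses precisely the commutativity of the projection system $\langle\pi_{\alpha\beta}\rangle$ built into the nice system, and is where the extra bookkeeping absent from the pure Prikry case is concentrated.
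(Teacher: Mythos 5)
Your proposal follows essentially the same route as the paper: build a $\leq^*$-decreasing sequence via the Merimovich form of the $*$-Prikry condition, and reduce everything to the key claim that $q_\alpha\oplus t$ extends $q_{\beta}\oplus\pi_{\mc(q_\alpha),\mc(q_\beta)}''t$, verified clause by clause from Definition \ref{less} exactly as you outline. The paper carries out that point-by-point verification in full (simplifying the bookkeeping of the indices $i_\gamma$, $j_\gamma$ by assuming without loss of generality that $p$ has empty stem, so in fact it gets $\leq^*$ rather than just $\leq$), but the decomposition and the clauses invoked are the same as yours.
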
 
  \begin{proof}
   Let $\langle D_\beta:\beta<\alpha\rangle$ be a sequence of open dense sets, with $\alpha<\lambda$, $p\in\mathbb{P}$, and let $\langle q_\beta:\beta\leq\alpha\rangle$, with $q_0=p$, be the sequence built with an iteration of the *-Prikry condition, i.e., for any $\beta\leq\alpha$, $q_{\beta+1}\leq^* q_\beta$ is such that there exists an $n_{\beta+1}$ such that for any $t\in T^{q_\beta}$ of length $n_{\beta+1}$, $q_{\beta+1}\oplus t\in D_\beta$, and if $\beta\leq\alpha$ is limit, then by $\lambda$-closeness of $\leq^*$ let $q_\beta$ be such that $q_\beta\leq^* q_\gamma$ for all $\gamma<\beta$. We can assume $p=\{\langle 0, \langle\rangle, TC\}$, where TC is the complete tree of the increasing finite sequences in $\lambda$. 
     \begin{claim}
      For any $\beta<\alpha$, for any $t\in T^{q^\alpha}$, $q_\alpha\oplus t\leq^* q_\beta\oplus \pi_{\mc(q_\alpha),\mc(q_\beta)}''t$.
     \end{claim}
     \begin{proof}
      Note that $q_\alpha^{\mc}=q_\beta^{\mc}=\langle\rangle$\footnote{The claim is in fact true in general, but in this case calculation is easier}. We prove it point by point.
      \begin{itemize}
       \item $q_\beta\oplus\pi_{\mc(q_\alpha),\mc(q_\beta)}''t$ is well defined, i.e., $\pi_{\mc(q_\alpha),\mc(q_\beta)}''t\in T^{q_\beta}$. This is true for Definition \ref{less}(5).
       \item $\supp(q_\alpha\oplus t)\supseteq\supp(q_\beta\oplus\pi_{\mc(q_\alpha),\mc(q_\beta)}''t)$. This is true because $\supp(q_\alpha\oplus t)=\supp(q_\alpha)$, 
        \begin{equation*}
         \supp(q_\beta\oplus\pi_{\mc(q_\alpha),\mc(q_\beta)}''t)=\supp(q_\beta) 
        \end{equation*}
        and by Definition \ref{less}(1).
       \item for any $\gamma\in\supp(q_\alpha)$, $(q_\alpha\oplus t)^\gamma=(q_\beta\oplus\pi_{\mc(q_\alpha),\mc(q_\beta)}''t)^\gamma$. By definition, and since $(q_\alpha)^\gamma=(q_\beta)^\gamma$, this is true if and only if 
        \begin{equation*}
         \pi_{\mc(q_\alpha),\gamma}[t\upharpoonright(\lh(t)\setminus(i_\gamma+1))]=\pi_{\mc(q_\beta),\gamma}[\pi_{\mc(q_\alpha),\mc(q_\beta)}''t\upharpoonright (\lh(t)\setminus (j_\gamma+1))], 
        \end{equation*}
        where $i_\gamma$ is the largest such that $t(i_\gamma)$ is not permitted by $(q_\alpha)^\gamma$ and $j_\gamma$ is the largest such that $\pi_{\mc(q_\alpha),\mc(q_\beta)}''t(j_\gamma)$ is not permitted by $(q_\beta)^\gamma=(q_\alpha)^\gamma$. By Definition \ref{P}(6) $(q_\alpha)^0=(q_\beta)^0=\langle\rangle$, therefore by Definition \ref{less}(6) 
        \begin{equation*}
         \pi_{\mc(q_\alpha),0}\upharpoonright t=\pi_{\mc(q_\beta),0}\circ\pi_{\mc(q_\alpha),\mc(q_\beta)}\upharpoonright T, 
        \end{equation*}
        so $i_\gamma=j_\gamma$. Therefore this point is true by Definition \ref{less}(6).
       \item $(q_\alpha\oplus t)^{\mc(q_\beta\oplus\pi_{\mc(q_\alpha),\mc(q_\beta)}''t)}\in T^{q_\beta\oplus\pi_{\mc(q_\alpha),\mc(q_\beta)}''t}$. First of all, $\mc(q_\beta\oplus\pi_{\mc(q_\alpha),\mc(q_\beta)}''t)=\mc(q_\beta)$. By definition, 
        \begin{equation*}
         (q_\alpha\oplus t)^{\mc(q_\beta)}=(q_\alpha)^{\mc(q_\beta)} {^\smallfrown}\pi_{\mc(q_\alpha),\mc(q_\beta)}[t\upharpoonright(\lh(t)\setminus (i_{\mc(q_\beta)}+1))], 
        \end{equation*}
        with $i_{\mc(q_\beta)}$ as above. By Definition \ref{less}(2) $(q_\alpha)^{\mc(q_\beta)}=q_\beta^{\mc}=\langle\rangle$, and therefore $i_{\mc(q_\beta)}=0$, so the point follows simply by definition of $T^{q_\beta\oplus\pi_{\mc(q_\alpha),\mc(q_\beta)}''t}$.
       \item $\pi_{\mc(q_\alpha),\mc(q_\beta)}$ projects $T_t^{q_\alpha\oplus t}$ into $T_{\pi_{\mc(q_\alpha),\mc(q_\beta)}''t}^{q_\beta\oplus \pi_{\mc(q_\alpha),\mc(q_\beta)}''t}$. By definition, $T_t^{q_\alpha\oplus t}= T_t^{q_\alpha}$ and $T_{\pi_{\mc(q_\alpha),\mc(q_\beta)}''t}^{q_\beta\oplus \pi_{\mc(q_\alpha),\mc(q_\beta)}''t}=T_{\pi_{\mc(q_\alpha),\mc(q_\beta)}''t}^{q_\beta}$, so this is true by Definition \ref{less}(5).
       \item for any $\gamma\in\supp(q_\alpha)$ and $\mu\in\Suc_{T^{q\oplus t}}(t)$, if $\mu$ is permitted for $(q_\alpha\oplus t)^\gamma$ then 
        \begin{equation*}
         \pi_{\mc(q_\alpha),\gamma}(\mu)=\pi_{\mc(q_\beta),\gamma}(\pi_{\mc(q_\alpha),\mc(q_\beta)}(\mu)). 
        \end{equation*}
        As $T^{q_\alpha\oplus t}$ is a subtree of $T^{q_\alpha}$, and $\mu$ is permitted for $(q_\alpha\oplus t)^\gamma$ means that it is also permitted for $(q_\alpha)^\gamma$, this is a direct consequence of Definition \ref{less}(6). 
      \end{itemize}
     \end{proof}

     Therefore $q_\alpha$ is as wanted: let $q'\leq q_\alpha$ such that $q'\in E_{n_\beta}$. Then $q'<q_\alpha\oplus t$ for some $t\in T^{q_\alpha}$ with length $n_\beta$. But by the Claim $q_\alpha\oplus t \leq^* q_\beta\oplus \pi_{\mc(q_\alpha),\mc(q_\beta)}''t$, and by definition $q_\beta\oplus \pi_{\mc(q_\alpha),\mc(q_\beta)}''t\in D_\beta$, so also $q'\in D_\beta$.
   \end{proof}

  \begin{cor}
  \label{nonsch}
   Suppose $I0(\kappa,\lambda)$. Then there exists a generic extension in which $I1(\kappa)+2^\kappa=\kappa^{++}+\forall\eta<\kappa\ 2^\eta=\eta^+$.
  \end{cor}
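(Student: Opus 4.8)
The plan is to obtain the desired model by a single application of Theorem \ref{general}, taking as $\mathbb{Q}$ the Gitik--Magidor extender-based Prikry forcing, but built at the critical point $\kappa$ instead of at $\lambda$ --- that is, reading the whole of the present section with $\kappa$ in place of $\lambda$. Two facts make this legitimate. First, $I0(\kappa,\lambda)$ implies that $\kappa$ is $2$-strong in $V$ (the critical point of an $I0$ embedding is in fact far stronger: the $(\kappa,\kappa+2)$-extender derived from $j\upharpoonright V_{\kappa+2}$ already lies in $V$, since $V_{\lambda+1}^{L(V_{\lambda+1})}=V_{\lambda+1}$, and witnesses $2$-strength). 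Second, $\lambda$ is a strong limit cardinal of cofinality $\omega$, so $2^\delta<\lambda$ for every $\delta<\lambda$. The only preliminary work is to arrange $\GCH$ without destroying $I0(\kappa,\lambda)$, and this is the role of the preparation $\mathbb{P}$.

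For $\mathbb{P}$ I would take the reverse Easton iteration that, at each infinite cardinal stage $\delta$, forces $2^\delta=\delta^+$ in the canonical $<\delta^+$-closed way (collapsing $2^\delta$ onto $\delta^+$ and coding with $\mathrm{Add}(\delta^+,1)$). One then checks the three demands of Definition \ref{Definition}: $\mathbb{P}$ is \emph{directed closed} because each factor is $<\delta$-directed closed; it is \emph{$\lambda$-bounded} because each factor has size less than $\lambda$ (at most $2^\delta$, using that $\lambda$ is a strong limit); and it is \emph{$j$-coherent} because the factor at $\delta$ is defined uniformly from $\delta$ and the cardinal arithmetic below $\delta$, which $L(V_{\lambda+1})$ computes exactly as $V$ does, whence $j(\mathbb{P}_\delta)=\mathbb{P}_{j(\delta)}$. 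Thus $\mathbb{P}$ satisfies the hypotheses of Theorem \ref{general}, and by Lemmas 3.6 and 3.7 of \cite{DimFried} the embedding survives, so $V^{\mathbb{P}}\vDash I0(\kappa,\lambda)$. In particular $\kappa$ is still $2$-strong in $V^{\mathbb{P}}$ --- this comes for free from the preserved $I0$, so no separate lifting of a $2$-strong embedding is needed --- and by construction $V^{\mathbb{P}}\vDash\GCH$.

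Now, working inside a $\mathbb{P}$-generic extension, I would let $\mathbb{Q}$ be the forcing of Definition \ref{P} built from a nice system $\langle\kappa^{++},\langle U_\alpha:\alpha<\kappa^{++}\rangle,\leq_E\rangle$ on the $2$-strong cardinal $\kappa$. The proposition of this section, whose proof is verbatim with $\kappa$ for $\lambda$, shows that $\mathbb{Q}$ is $\kappa$-geometric; moreover $\mathbb{Q}$ adds a cofinal $\omega$-sequence to $\kappa$, i.e.\ a Prikry sequence, and $|\mathbb{Q}|=\kappa^{++}<\lambda$, so $\mathbb{Q}\in(V_\lambda)^{V^{\mathbb{P}}}$. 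Hence $\mathbb{Q}$ meets every requirement imposed on the second forcing in Theorem \ref{general}, and the theorem produces generics $G$ for $\mathbb{P}$ and $H$ for $\mathbb{Q}$ with $V[G][H]\vDash\exists k\colon V_{\kappa+1}\prec V_{\kappa+1}$, that is, $V[G][H]\vDash I1(\kappa)$.

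It remains only to read off the cardinal arithmetic. Since $\GCH$ holds in $V[G]$ and $\kappa$ is $2$-strong there, the Gitik--Magidor theorem above applies to $\mathbb{Q}$, so for \emph{any} $\mathbb{Q}$-generic over $V[G]$, in particular for the $H$ produced by Theorem \ref{general}, we get $V[G][H]\vDash 2^\kappa=\kappa^{++}\wedge\forall\eta<\kappa\ 2^\eta=\eta^+$. Combined with the previous paragraph this yields $V[G][H]\vDash I1(\kappa)+2^\kappa=\kappa^{++}+\forall\eta<\kappa\ 2^\eta=\eta^+$, as required. The genuinely delicate point of the whole argument is the preparation $\mathbb{P}$: for an arbitrary $I0$ ground model, forcing $\GCH$ really does require collapsing below $\lambda$, so the factors must be chosen carefully to remain directed closed, $\lambda$-bounded and $j$-coherent, which is exactly what licenses the $I0$-preservation of \cite{DimFried}. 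Once $V[G]\vDash\GCH$ with $\kappa$ still $2$-strong, everything after that --- the $\kappa$-geometricity of the extender forcing and the invocations of Theorem \ref{general} and of Gitik--Magidor --- is bookkeeping.
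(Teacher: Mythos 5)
Your proposal is correct and follows essentially the same route as the paper: prepare with a directed closed, $\lambda$-bounded, $j$-coherent reverse Easton iteration forcing \GCH, observe that $\kappa$ remains $2$-strong in the extension, apply the extender-based Prikry forcing at $\kappa$ (which is $\kappa$-geometric and lies in $V_\lambda$), and invoke Theorem \ref{general} together with the Gitik--Magidor theorem to read off the cardinal arithmetic. The only cosmetic difference is that you derive the $2$-strength of $\kappa$ directly from an extender read off $j\upharpoonright V_\lambda$, whereas the paper passes through $I0\Rightarrow I2$ and its equivalence with the existence of $k:V\prec M$ with $V_\lambda\subseteq M$; both are standard.
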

  \begin{proof}
   We want to apply Theorem \ref{general}. so first note that the forcing that forces \GCH{} below $\lambda$ is a directed closed, $\lambda$-bounded, $j$-coherent reverse Easton iteration, while the forcing that forces \GCH{} above $\lambda$ is $\lambda$-closed, and therefore does not touch $I0$. $I0$ clearly implies I2, and it is a well-known fact (see e.g. Proposition 24.2 in \cite{Kanamori}) that this is equivalent to the existence of $k:V\prec M$ with $V_\lambda\subseteq M$. We can construct $k$ so that $k\upharpoonright V_\lambda=j\upharpoonright V_\lambda$, so $\kappa$ is $2$-strong. Therefore we can apply the extender-based Pikry forcing to $\kappa$. The elements of $\mathbb{P}$ on $\kappa$ are $\kappa$-sequences of triples of elements of $\kappa^{++}$, finite sequences in $\kappa$ and functions from $\kappa^\omega$ to ${\cal P}(\kappa)$, so we can say that the forcing is in $V_{\kappa_1}$. The forcing adds a Prikry sequence to $\kappa$ and it 
is $\kappa$-geometric, therefore the conditions of Theorem \ref{general} are met.
  \end{proof}
 
\section{Diagonal Supercompact Prikry forcings}

There are many versions of the diagonal supercompact Prikry forcing, we are going to use the one in \cite{GitikSharon} (and later the one in \cite{Neeman}). First, there is a preparation forcing that forces $2^\alpha=\alpha^{+\omega+2}$ for all $\alpha$ inaccessible. Then, the diagonal supercompact forcing exploits the fine normal ultrafilters that come from enough supercompactness of a cardinal to add Prikry sequences to it, while inducing an interesting pcf structure.

\begin{defin}
 Let $\kappa,\gamma$ be cardinals. We say that $\kappa$ is $\gamma$-supercompact iff there exists a fine normal measure on ${\cal P}_\kappa(\gamma)$, i.e., a measure $U$ such that for any $f:{\cal P}_\kappa(\eta)\to \gamma$ such that $f(x)\in x$ for almost every $x$, $f$ is constant on a set in $U$.
\end{defin}

One interesting combinatorial principle is $\Box_\kappa$, that states the existence of a coherent collection of clubs. While many combinatorial principles are consistent with the existence of large cardinals, $\Box_\kappa$ fails above large enough cardinals (Solovay). It is of interest, therefore, investigating weakings of such principle.

\begin{defin}
 We say that a cardinal $\kappa$ has the approachability property, $AP_\lambda$, iff there exists a sequence $\langle C_\alpha:\alpha<\kappa^+\rangle$ such that
 \begin{itemize}
  \item for $\alpha$ limit, $C_\alpha$ is a club in $\alpha$ and $\ot(C_\alpha)=\cof(\alpha)$;
  \item there is a club $D\subseteq\kappa^+$ such that for any $\alpha\in D$, for any $\beta<\alpha$ there exists $\gamma<\alpha$ such that $C_\alpha\cap\beta=C_\gamma$.
 \end{itemize}
\end{defin}

It is not difficult to see that it is a weakening of $\Box_\kappa$.

Another field of research in infinite combinatorics is pcf theory: given a cardinal $\kappa$ and $\langle\mu_n:n\in\omega\rangle$ cofinal, it investigates the structure of the functions in $\Pi_{n\in\omega}\mu_n$, and it is a standard tool for the analysis of the combinatorics of a cardinal of cofinality $\omega$.  

\begin{defin}
 Let $\langle\mu_n:n\in\omega\rangle$ be a sequence cofinal in $\kappa$. A sequence $\langle f_\alpha:\alpha<\kappa^+\rangle\subseteq\Pi_{n\in\omega}\mu_n$ is a very good scale iff
 \begin{itemize}
  \item $\langle f_\alpha:\alpha<\kappa^+\rangle$ is a scale, i.e. such that for every $\alpha<\beta<\kappa^+$, $f_\alpha(m)<f_\beta(m)$ for almost every $m$ and for every $f\in\Pi_{n\in\omega}\mu_n$ there exists $\beta<\kappa^+$ and $n\in\omega$ with $f(m)<f_\beta(m)$ for every $m>n$. 
  \item for every $\beta<\kappa^+$ such that $\omega<\cof(\beta)$ there exists a club $C$ of $\beta$ and $n<\omega$ such that $f_{\gamma_1}(m)<f_{\gamma_2}(m)$ for every $\gamma_1<\gamma_2\in C$ and $m>n$.
 \end{itemize}
 If $\kappa$ is as above, we say that there exists a very good scale in $\kappa$, $VGS_\kappa$.
\end{defin}

Both these properties don't hold above a supercompact cardinal, and in \cite{GitikSharon} it is proven that having a very good scale does not imply the approachability property. We will prove that this holds also under rank-into-rank hypotheses. 

Let $\mathbb{E}$ be the reverse Easton forcing of length $\lambda$ that force $2^\alpha=\alpha^{+\omega+2}$ for all $\alpha$ inaccessible. This forcing is:
 \begin{itemize}
  \item directed closed: $\mathbb{Q}_\alpha$, the forcing that adds $\alpha^{+\omega+2}$ subsets of $\alpha$, is $<\alpha$-directed closed;
  \item $\lambda$-bounded: as $\lambda$ is strong limit $|\mathbb{Q}_\alpha|<\lambda$. 
  \item $j$-coherent: as $j(\mathbb{Q}_\alpha)$ is the poset consisting of the functions whose domain is a subset of $j(\alpha)^{+\omega+2}$ of size less than $j(\alpha)$, and whose range is a subset of the partial functions between $j(\alpha)$ and $j(\alpha)$, that is, $j(\mathbb{Q}_\alpha)=\mathbb{Q}_{j(\alpha)}$.
 \end{itemize}

Let $\kappa$ be $\kappa^{+\omega+2}$-supercompact, with $U_\omega$ witnessing it and let $U_n$ be the projection of $U_\omega$ on ${\cal P}_\kappa(\kappa^{+n})$, i.e., 
\begin{equation*}
 X\in U_n\text{ iff }\{P\in{\cal P}_\kappa(\kappa^{+\omega+2}):P\cap\kappa^{+n}\in X\}\in U_\omega. 
\end{equation*}
 Clearly $U_n$ is a normal ultrafilter on ${\cal P}_\kappa(\kappa^{+n})$. 

Let $a,b\in{\cal P}_\kappa(\kappa^{+n})$ and $b\cap\kappa\in\kappa$. Set
 \begin{equation*}
  a\underset{\sim}{\subset}b \leftrightarrow a\subseteq b \wedge \ot(a)<b\cap\kappa.
 \end{equation*}
 
 \begin{defin}
 \label{diagonalprikry}
  $p\in\mathbb{Q}$ iff $p=\langle a_0^p,a_1^p,\dots,a_{n-1}^p,X_n^p,X_{n+1}^p,\dots\rangle$ where
  \begin{enumerate}
   \item $\forall l<n\ a_l^p\in{\cal P}_\kappa(\kappa^{+n})$ and $a_l^p\cap\kappa$ is an inaccessible cardinal;
   \item $\forall m\geq n$, $X_m^p\in U_m$;
   \item $\forall m\geq n\ \forall b\in X_m^p\ \forall l<n\ a_l^p\underset{\sim}{\subset} b$;
   \item $\forall i<j<l$ $a_i^p\underset{\sim}{\subset} a_j^p$.
  \end{enumerate}
 \end{defin}

 For $p=\langle a_0^p,a_1^p,\dots,a_{n-1}^p,X_n^p,X_{n+1}^p,\dots\rangle$, let us denote $n$ as $l(p)$. Moreover, for any collection of $A_i\subseteq{\cal P}_\kappa(\kappa^{+i})$, let
 \begin{equation*}
  \tilde{\Pi}_{n\in m}A_n=\{\langle a_0,\dots, a_{m-1}\rangle:\forall i<j<m\ a_i\in A_i\wedge a_i \underset{\sim}{\subset}a_j\}.
 \end{equation*}
 
 For any collection of $A_a$, $a\in{\cal P}_\kappa(\kappa^{+n})$, let 
 \begin{equation*}
  \Delta A_a=\{b\in{\cal P}_\kappa(\kappa^{+n}):\forall a\in{\cal P}_\kappa(\kappa^{+n})\ a\underset{\sim}{\subset} b\rightarrow b\in X_a\}. 
 \end{equation*}
 It is a standard result that if each $A_a\in U_n$, then $\Delta A_a\in U_n$.
 
 \begin{defin}
  Let $p,q\in\mathbb{Q}$. Then $p\leq^*q$ iff
  \begin{enumerate}
   \item $l(p)=l(q)$;
   \item $\forall l<l(p)\ a_l^p=a_l^q$;
   \item $\forall m\geq l(p)\ X_m^p\subseteq X_m^q$.
  \end{enumerate}
 \end{defin}

 \begin{defin}
  Let $p\in\mathbb{Q}$ and $\vec{a}\in\tilde{\Pi}_{l(p)\leq n\leq m}X_n^p$. Then we denote by $p\oplus\vec{a}$ the sequence $\langle a_0^p,\dots a_{l(p)}^p,a(l(p)),\dots,a(m), Y_{m+1}^p,\dots\rangle$, where
  \begin{equation*}
   Y_n=\{b\in X_n^p:\forall l(p)\leq i\leq m\ a(i)\underset{\sim}{\subset} b\}.
  \end{equation*}

  Then $p\leq q$ iff there exists $\vec{a}$ such that $p\leq^* q\oplus\vec{a}$. 
 \end{defin}

\begin{teo}[Gitik, Sharon]
\label{diagonal}
 Let $G$ generic for $\mathbb{P}$ and $H$ generic for $\mathbb{Q}$ as above. Then $V[H][G]\vDash 2^\kappa>\kappa^+\ \wedge\ \neg AP_\kappa\ \wedge\ VGS_\kappa$.
\end{teo}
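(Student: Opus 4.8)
The plan is to recover, in order, the four ingredients underlying the Gitik--Sharon analysis: the forcing-theoretic skeleton of $\mathbb{Q}$, the resulting cardinal arithmetic, the very good scale, and the failure of approachability, the last being the genuinely hard point. First I would verify that the pair $(\leq,\leq^*)$ has the Prikry property and that $\leq^*$ is $\kappa$-closed, proceeding as in Lemma \ref{*tree} but replacing the single tree of successors by the product of the measures $U_m$ and using the diagonal intersection $\Delta$ to remain inside each $U_n$: one shrinks the measure one sets $X_m^p$ level by level until every $\oplus$-extension of a fixed length decides the given statement. Together with the fact that $\leq^*$ is $\kappa$-closed this shows that $\mathbb{Q}$ adds no new bounded subset of $\kappa$, and, after noting that (in $V[G]$, where the preparation $\mathbb{E}$ has forced $2^\kappa=\kappa^{+\omega+2}$) a $\Delta$-system argument on the finite stems gives a chain condition preserving all cardinals $\geq\kappa^{+\omega+2}$, it follows that the cardinal structure is disturbed only between $\kappa$ and $\kappa^{+\omega+1}$.

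The arithmetic then falls out by genericity. Writing $\langle a_n:n\in\omega\rangle$ for the generic diagonal sequence, with $a_n\in{\cal P}_\kappa(\kappa^{+n})$, a density argument gives $\bigcup_n a_n=\kappa^{+\omega}$, so a set of size $\kappa$ surjects onto $\kappa^{+\omega}$ and every $(\kappa^{+n})^V$ with $1\leq n\leq\omega$ is collapsed to $\kappa$, while $\langle a_n\cap\kappa:n\in\omega\rangle$ witnesses $\cof(\kappa)=\omega$. Hence $(\kappa^{+\omega+1})^V=(\kappa^+)^{V[G][H]}$ and $(\kappa^{+\omega+2})^V=(\kappa^{++})^{V[G][H]}$, and combined with $2^\kappa=\kappa^{+\omega+2}$ in $V[G]$ this delivers $2^\kappa=(\kappa^{++})^{V[G][H]}>\kappa^+$.

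For $VGS_\kappa$ I would read a scale off the generic. With $\kappa_n=a_n\cap\kappa$ an increasing $\omega$-sequence of inaccessibles cofinal in $\kappa$, I would work in a suitable product $\prod_n\mu_n$ with $\langle\mu_n\rangle$ cofinal in $\kappa$ and transport the canonical scale of $V$ through the collapse; its length is exactly $(\kappa^{+\omega+1})^V=\kappa^+$. The key is that very-goodness --- for every $\alpha$ of uncountable cofinality \emph{a} club on which the scale is everywhere eventually increasing --- is built into the conditions: because extensions are $\underset{\sim}{\subset}$-respecting end-extensions (Definition \ref{diagonalprikry}(3),(4)) and the measure one sets shrink coherently under $\Delta$, the generic yields simultaneous, not merely unbounded, domination along clubs.

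The last and hardest stage is $\neg AP_\kappa$, where the whole subtlety of the theorem resides: $VGS_\kappa$ does \emph{not} by itself yield $AP_\kappa$, because a very good scale only supplies, separately for each point, \emph{some} club witnessing domination, whereas $AP_\kappa$ demands a \emph{single} sequence $\langle C_\alpha:\alpha<\kappa^+\rangle$ cohering along a club $D$. The plan is to argue by contradiction: a putative witness $\langle C_\alpha\rangle$ lives in $V[G][H]$ and, by the chain condition together with the absence of new bounded subsets of $\kappa$, is captured by a name of bounded size, hence cannot anticipate the diagonal Prikry generic; one then shows that at stationarily many $\alpha$ the coherence clause $C_\alpha\cap\beta=C_\gamma$ would force the initial segments of $C_\alpha$ to encode a ladder structure on the collapsed cardinals $(\kappa^{+n})^V$ incompatible with genericity, since the tail of $\mathbb{Q}$ can always drive the generic sets off any fixed guess from an initial segment of the iteration. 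Thus the approachable points are non-stationary and $AP_\kappa$ fails. I expect this to be the principal obstacle: the failure of coherence must be exhibited exactly where the very good scale already lives, so one has to pinpoint the non-approachable points using the specific diagonal geometry of $\mathbb{Q}$ --- the interplay, forced by $\underset{\sim}{\subset}$, between $\ot(a_n)$ and $a_n\cap\kappa$ --- without disturbing the scale built in the previous step.
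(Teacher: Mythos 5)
The paper does not prove this theorem at all: it is imported as a black box from Gitik--Sharon \cite{GitikSharon}, so your proposal can only be measured against that original argument. Your first three stages follow the standard analysis and are sound in outline: the *-Prikry condition via diagonal intersections, $\kappa$-closure of $\leq^*$ (hence no new bounded subsets of $\kappa$), the collapse of $\kappa^{+\omega}$ to $\kappa$ by density, and a scale read off the generic sequence. One correction there: to conclude $(\kappa^{+\omega+1})^V=(\kappa^+)^{V[G][H]}$ you need $\kappa^{+\omega+1}$ itself to survive, which your stated chain condition (preservation only from $\kappa^{+\omega+2}$ up) does not give; the right observation is that conditions sharing a stem are compatible and there are only $\kappa^{+\omega}$ stems, so $\mathbb{Q}$ is $\kappa^{+\omega+1}$-c.c.

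The genuine gap is in the $\neg AP_\kappa$ stage. The argument that a putative witness ``is captured by a name of bounded size, hence cannot anticipate the diagonal Prikry generic'' does not work: a witnessing sequence $\langle C_\alpha:\alpha<\kappa^+\rangle$ is an arbitrary object of $V[G][H]$ and is perfectly entitled to be defined from the generic; nothing forces its name to be bounded (it codes a subset of $\kappa^+=(\kappa^{+\omega+1})^V$), and ``genericity defeats any guess'' only refutes witnesses coded in the ground model, which is not what $AP_\kappa$ asserts. You also aim at too strong a conclusion: one does not show that the approachable points are non-stationary, only that the non-approachable ones are stationary. The actual Gitik--Sharon argument pins down a concrete stationary set of non-approachable $\alpha$ --- ordinals of small $V$-cofinality arising as suprema of sets measured by the supercompactness ultrafilter --- and uses the conjunction of the *-Prikry condition and the $\kappa$-closure of $\leq^*$ to pull a putative slow club witnessing approachability at such an $\alpha$ back into $V$ (or an intermediate model), where it contradicts the choice of the measure-one sets; the supercompact embedding, not raw genericity, is what makes the reflection work. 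Without that mechanism your final paragraph is a statement of intent rather than a proof.
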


\begin{prop}
 $\mathbb{Q}$ as above is $\kappa$-geometric.
\end{prop}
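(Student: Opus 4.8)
\emph{Plan.} The plan is to run exactly the argument of Lemma \ref{Prikry}, with the length measure reading off the length of the stem and with the role of the measure-one tail played by the sequence $\langle X_m^p:m\geq l(p)\rangle$. First I would set $l(p)=n$ for $p=\langle a_0^p,\dots,a_{n-1}^p,X_n^p,X_{n+1}^p,\dots\rangle$, so that $l(1_{\mathbb{Q}})=0$, and I would check it is a length measure in the sense of Definition \ref{geometric}: if $p\leq q$ then $p\leq^* q\oplus\vec{a}$ for some $\vec{a}$, whence $l(p)=l(q)+\lh(\vec{a})\geq l(q)$. Note also that $p\leq^* q$ holds exactly when $p\leq q$ and $l(p)=l(q)$, and that $\leq^*$ is $\kappa$-complete: the lower bound of a $\leq^*$-decreasing sequence of length $<\kappa$ is obtained by keeping the common stem and intersecting the $X_m$-components, which remain in $U_m$ since each $U_m$ is $\kappa$-complete.

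The main input is the analogue of Lemma \ref{*tree} for this forcing, the \emph{*-Prikry condition}: for every $p\in\mathbb{Q}$ and every dense $D$ there are $n\in\omega$ and $q\leq^* p$ such that $q\oplus\vec{a}\in D$ for every $\vec{a}\in\tilde{\Pi}_{l(q)\leq i<l(q)+n}X_i^q$. The hard part is precisely this step, and I would prove it in the spirit of Lemma \ref{*tree}: shrink the $X_m^p$ by induction so that whether an extension lands in $D$ no longer depends on the tail measure-one sets, then homogenize level by level using the normality of the $U_m$ and the diagonal intersection operator $\Delta$, and finally use density of $D$ to extract a single length $n$. This is the genuine technical content; it is, however, entirely standard and is carried out in \cite{GitikSharon}, so I would cite it rather than reproduce it.

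Granting the *-Prikry condition, the rest follows Lemma \ref{Prikry} closely. Given $\alpha<\kappa$, a sequence $\langle D_\beta:\beta<\alpha\rangle$ of open dense sets, and $p$, I would build a $\leq^*$-decreasing chain $\langle q_\beta:\beta\leq\alpha\rangle$ with $q_0=p$: at successor stages apply the *-Prikry condition to $D_\beta$ to obtain $q_{\beta+1}\leq^* q_\beta$ and $n_{\beta+1}$ such that every $\oplus$-extension of $q_{\beta+1}$ adding $n_{\beta+1}$ coordinates lies in $D_\beta$; at limits take a $\leq^*$-lower bound by $\kappa$-completeness. The key simplification over the extender-based forcing is that all the $q_\beta$ share the stem of $p$, so for every $\vec{a}$ and every $\beta$ one has $q_\alpha\oplus\vec{a}\leq^* q_{\beta+1}\oplus\vec{a}$ directly, with the \emph{same} $\vec{a}$: this is the analogue of the ``same $t$'' point in Lemma \ref{Prikry}, and unlike the extender-based case it requires no re-projection of $\vec{a}$, precisely because direct extension fixes the stem.

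Finally I would check that $q_\alpha$ witnesses $\kappa$-geometricity. If a filter $F$ contains $q_\alpha$ and meets every $E_m$, fix $\beta<\alpha$; since $F$ meets $E_{l(q_\alpha)+n_{\beta+1}}$ and is a filter, there is $r\in F$ with $r\leq q_\alpha$ and $l(r)\geq l(q_\alpha)+n_{\beta+1}$. Writing $r\leq^* q_\alpha\oplus\vec{a}$ and truncating to $\vec{a}_0=\vec{a}\upharpoonright n_{\beta+1}$, we get $q_\alpha\oplus\vec{a}_0\leq^* q_{\beta+1}\oplus\vec{a}_0\in D_\beta$, hence $q_\alpha\oplus\vec{a}_0\in D_\beta$ by openness, and then $r\in D_\beta\cap F$ since $r\leq q_\alpha\oplus\vec{a}_0$. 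Thus $F$ meets every $D_\beta$, as required. I expect the only real obstacle to be the *-Prikry condition of the second paragraph; the iteration and the final filter argument are routine.
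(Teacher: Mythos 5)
Your proposal is correct and follows essentially the same route as the paper: the paper's own proof simply observes that the *-Prikry condition for this forcing is well known and then refers back to the argument of Lemma \ref{Prikry}, which is precisely the chain-plus-filter argument you spell out. Your additional observation that the fixed stem makes the re-projection step of the extender-based case unnecessary is accurate and is implicit in the paper's appeal to Lemma \ref{Prikry}.
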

\begin{proof}
 The *-Prikry condition is well known, so the proof is as Lemma \ref{Prikry}.
\end{proof}

\begin{cor}
\label{AP}
 Suppose $I0(\kappa,\lambda)$. Then there exists a generic extension in which there exists $j:V_{\kappa+1}\prec V_{\kappa+1}$, $2^{\kappa}>\kappa^+$, there is a very good scale in $\kappa$ but the approachability property does not hold in $\kappa$.
\end{cor}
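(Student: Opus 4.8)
The plan is to apply Theorem \ref{general} in the same spirit as Corollary \ref{nonsch}, taking the preparation $\mathbb{E}$ as the ``Easton-like'' iteration and the diagonal supercompact Prikry forcing $\mathbb{Q}$ of Definition \ref{diagonalprikry} as the ``Prikry-like'' forcing at $\kappa$, and then to read off the three combinatorial conclusions from Theorem \ref{diagonal}. The preparation $\mathbb{E}$ has already been checked to be a directed closed, $\lambda$-bounded, $j$-coherent reverse Easton iteration, so it is an admissible choice of $\mathbb{P}$ for Theorem \ref{general}, and by the Proposition immediately preceding this corollary $\mathbb{Q}$ is $\kappa$-geometric. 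The whole argument therefore reduces to verifying the two remaining hypotheses of Theorem \ref{general}, namely that $\mathbb{Q}$ is definable inside $(V_\lambda)^{V[G]}$ and adds a Prikry sequence to $\kappa$.

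The step I expect to be the main obstacle is guaranteeing that $\mathbb{Q}$ is definable in $V[G]$ at all: the measures $U_\omega$ and $U_n$ used in Definition \ref{diagonalprikry} require $\kappa$ to be $\kappa^{+\omega+2}$-supercompact in $V[G]$. In the ground model this is automatic, since $\kappa=\crt(j)$ is the critical point of an embedding witnessing $I0$ and is therefore $n$-huge for every $n$; already $1$-hugeness, witnessed by some $k:V\prec M$ with $M^{k(\kappa)}\subseteq M$, yields $\gamma$-supercompactness for every $\gamma<k(\kappa)$, and as $k(\kappa)$ is inaccessible and hence well above $\kappa^{+\omega+2}$ this gives $\kappa^{+\omega+2}$-supercompactness of $\kappa$ in $V$. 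To push this through the preparation I would lift such a supercompactness embedding $k$ to $V[G]$ exactly as in the Gitik--Sharon construction: because $\mathbb{E}$ is defined uniformly at inaccessibles, $k(\mathbb{E})$ factors as $\mathbb{E}_\kappa*\mathbb{Q}_\kappa*\mathbb{E}_{\mathrm{tail}}$, the first two factors are generated by $G$ itself (using the closure of $M$ to see that $\mathbb{Q}_\kappa$ is computed the same way in $V$ and $M$), and a generic for the highly closed $\mathbb{E}_{\mathrm{tail}}$ over $M$ is built by the standard master-condition-plus-directed-closure argument, producing $k:V[G]\prec M[k(G)]$. This lifting is the only nonformal point; the feature genuinely specific to our setting, and the thing I would check most carefully, is that the Laver-style guidance needed to make this preservation go through can be incorporated into $\mathbb{E}$ without destroying its $j$-coherence, since $j$-coherence is what Theorem \ref{general} requires of the preparation.

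Granting the supercompactness of $\kappa$ in $V[G]$, the rest is routine. There $\mathbb{Q}$ is defined, its conditions are built from $\kappa^{+\omega+2}$ and the measures $U_n$ and hence lie in $V_{\kappa_1}\subseteq(V_\lambda)^{V[G]}$, it adds a Prikry sequence to $\kappa$ (the diagonal sequence singularizes $\kappa$ to cofinality $\omega$), and it is $\kappa$-geometric. Thus every hypothesis of Theorem \ref{general} is met, and the theorem delivers $G$ generic for $\mathbb{E}$ and $H$ generic for $\mathbb{Q}$ with $V[G][H]\vDash I1(\kappa)$. Finally, since $2^\kappa>\kappa^+$, $\neg AP_\kappa$ and $VGS_\kappa$ hold in \emph{every} $\mathbb{Q}$-generic extension of $V[G]$ by Theorem \ref{diagonal}, they hold in particular in the extension by the specific generic $H$ produced by Theorem \ref{general}. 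Hence $V[G][H]$ simultaneously satisfies $I1(\kappa)$, $2^\kappa>\kappa^+$, $VGS_\kappa$ and $\neg AP_\kappa$, which is exactly the combination asserted.
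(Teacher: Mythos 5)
Your overall architecture matches the paper's: take $\mathbb{P}=\mathbb{E}$, take $\mathbb{Q}$ to be the Gitik--Sharon diagonal forcing, check the hypotheses of Theorem \ref{general}, and read the combinatorics off Theorem \ref{diagonal}. Where you diverge is on the one step you correctly identify as the crux, namely getting $\kappa$ to be $\kappa^{+\omega+2}$-supercompact in $V[G]$ so that $\mathbb{Q}$ is defined there. You do this the classical way: extract a supercompactness embedding from the hugeness of $\kappa$ in $V$ and lift it through $\mathbb{E}$ with a master-condition/directed-closure argument, worrying along the way about whether a Laver-style preparation can be added without breaking $j$-coherence. The paper does something much cheaper: the machinery behind Theorem \ref{general} (Lemmas 3.6 and 3.7 of \cite{DimFried}) already gives that $I0(\kappa,\lambda)$ itself survives to $V[G]$, witnessed by some $k$, and then $U_\omega=\{X\supseteq {\cal P}_\kappa(\kappa^{+\omega+2}):k``\kappa^{+\omega+2}\in k(X)\}$, defined in $V[G]$ directly from the lifted $k$, is the required fine normal measure. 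This makes your ``main obstacle'' a one-line observation: no separate lifting of a supercompactness embedding, no counting of dense sets, and in particular no indestructibility or Laver guidance is needed (the paper only invokes Shi's indestructibility result later, for the Neeman variant, where the preparation genuinely interacts with the supercompactness used). Your route is workable -- the reverse Easton Cohen-style iteration does lift $\kappa^{+\omega+2}$-supercompactness by the standard argument, and the tail above the next inaccessible is too closed to matter -- but it imports the technical overhead of the original Gitik--Sharon preservation lemma and leaves your flagged worry about $j$-coherence unresolved, whereas the point of the paper's framework is precisely that the lifted $I0$ embedding hands you all the large-cardinal structure of $\kappa$ in $V[G]$ for free. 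The remainder of your argument (that $\mathbb{Q}\subseteq V_{\kappa_1}$, that it adds a Prikry sequence, and that Theorem \ref{diagonal} applies to the particular generic produced by Theorem \ref{general}) agrees with the paper.
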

\begin{proof}
  Let $H$ be $\mathbb{E}$-generic for $V$. By \ref{general} (without the Prikry part) $I0(\kappa,\lambda)$ still holds in $V[H]$, say witnessed by $k$. Then 
  \begin{equation*}
   U_\omega=\{X\supseteq {\cal P}_\kappa(\kappa^{+\omega+2}):k``\kappa^{+\omega+2}\in k(X)\}, 
  \end{equation*}
  defined in $V[H]$, witnesses that $\kappa=\crt(k)$ is $\kappa^{+\omega+2}$- supercompact. Therefore we can force on $\kappa$ with $\mathbb{Q}$. $\mathbb{Q}\subseteq\Pi_{n\in\omega}{\cal P}_\kappa(\kappa^{+n})$, so $\mathbb{Q}\subseteq V_{\kappa_1}$. The hypotheses of Theorem \ref{general} are then satisfied, and Theorem \ref{diagonal} proves the Corollary.
\end{proof}

In \cite{Neeman} Neeman introduced a variation on Gitik-Sharon forcing, that has a more structured preparation forcing and needs more large cardinal power. The result will involve the Tree Property:

 \begin{defin}
  Let $\kappa$ be a cardinal. Then the tree property holds at $\kappa$, $TP(\kappa)$, if every tree of height $\kappa$ and such that all levels have size $<\kappa$ has a cofinal branch.
 \end{defin}

Suppose $j$ witnesses $I0(\kappa,\lambda)$ and let $\langle\kappa_i:i\in\omega\rangle$ be the critical sequence of $j$.

\begin{lem}
\label{limit}
  $V_\lambda\vDash\kappa_1$ is limit of supercompact cardinals.
\end{lem}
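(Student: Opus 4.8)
The plan is to first show that $\crt(j)=\kappa$ is supercompact inside $V_\lambda$, then to use the elementarity of $j\upharpoonright V_\lambda$ together with the inequality $\kappa<\kappa_1$ to reflect supercompact cardinals cofinally below $\kappa$, and finally to push this conclusion up to $\kappa_1$ by one last application of $j$.

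First I would record the standard preliminaries. Since $\lambda$ is the supremum of the critical sequence we have $j(\lambda)=\lambda$, and because truth in $V_\lambda$ is definable in $L(V_{\lambda+1})$ the restriction $j\upharpoonright V_\lambda\colon V_\lambda\prec V_\lambda$ is elementary; likewise each iterate gives $j^n\upharpoonright V_\lambda\colon V_\lambda\prec V_\lambda$ with $\crt(j^n)=\kappa$ and $j^n(\kappa)=\kappa_n$. Given $\gamma<\lambda$, I choose $n$ with $\gamma<\kappa_n$ and set
\[ U_\gamma=\{X\subseteq{\cal P}_\kappa(\gamma): (j^n)''\gamma\in j^n(X)\}. \]
Since $|(j^n)''\gamma|=\gamma<\kappa_n=j^n(\kappa)$ and $(j^n)''\gamma\subseteq j^n(\gamma)$, the usual seed computation shows $U_\gamma$ is a fine normal $\kappa$-complete ultrafilter on ${\cal P}_\kappa(\gamma)$. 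The crucial bookkeeping point is that $U_\gamma$ belongs to $V_\lambda$: it is a subfamily of ${\cal P}({\cal P}_\kappa(\gamma))$, hence $U_\gamma\in V_{\gamma+\omega}\subseteq V_\lambda$ (as $\gamma<\lambda$ and $\lambda$ is a limit of inaccessibles), and every clause defining a fine normal measure is absolute between $V$ and $V_\lambda$ because all relevant witnesses already lie in $V_\lambda$. Thus $V_\lambda\vDash``\kappa$ is $\gamma$-supercompact'' for every $\gamma<\lambda$, i.e.\ $V_\lambda\vDash``\kappa$ is supercompact''.

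The heart of the proof is the reflection step. Fix $\beta<\kappa$. Because $\kappa$ is supercompact in $V_\lambda$ and $\beta<\kappa<\kappa_1$, the cardinal $\kappa$ itself witnesses
\[ V_\lambda\vDash\exists\delta\,(\beta<\delta<\kappa_1\ \wedge\ \delta\text{ is supercompact}). \]
Writing this as $V_\lambda\vDash\varphi(\kappa_1)$ where $\varphi(y)\equiv\exists\delta(\beta<\delta<y\wedge\delta\text{ supercompact})$ has $\beta$ as its only parameter, and using $j(\beta)=\beta$ (since $\beta<\kappa=\crt(j)$) together with $\kappa_1=j(\kappa)$, elementarity of $j\upharpoonright V_\lambda$ yields $V_\lambda\vDash\varphi(\kappa)$; that is, there is a supercompact cardinal in $(\beta,\kappa)$. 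As $\beta<\kappa$ was arbitrary, $V_\lambda\vDash``\kappa$ is a limit of supercompact cardinals''. Finally, ``being a limit of supercompacts'' is a property of $\kappa$ with no further parameters, so one more application of the elementarity of $j\upharpoonright V_\lambda$ transfers it from $\kappa$ to $j(\kappa)=\kappa_1$, which is precisely the statement of the lemma.

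I expect the two delicate points to be exactly the ones flagged above. The first is the absoluteness bookkeeping: one must check that the derived measures genuinely belong to $V_\lambda$, so that supercompactness is witnessed internally in $V_\lambda$ rather than merely in $V$; this is routine but must be done. The second, and the one genuinely clever ingredient, is the reflection via $\crt(j)=\kappa<j(\kappa)=\kappa_1$: it is essential that the reflecting formula $\varphi$ mention only the fixed parameter $\beta<\kappa$ and the moved points $\kappa,\kappa_1$, so that applying $j$ leaves the formula unchanged and turns the existence of a supercompact \emph{below} $\kappa_1$ (namely $\kappa$) into the existence of supercompacts cofinally below $\kappa$, whence below $\kappa_1$ by a last transfer.
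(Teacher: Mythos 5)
Your proof is correct and follows essentially the same strategy as the paper: reflect downward through $j\upharpoonright V_\lambda$, using $\kappa=\crt(j)<j(\kappa)=\kappa_1$, to obtain cardinals supercompact in $V_\lambda$ unboundedly below $\kappa$, and then apply $j$ once more to transfer this to $\kappa_1$. The only real difference is what gets reflected: the paper reflects the statement \emph{there is a $k:V_\lambda\prec V_\lambda$ with critical point in the given interval} (witnessed by $j\upharpoonright V_\lambda$ itself) and then uses that such critical points are supercompact in $V_\lambda$, whereas you first establish $V_\lambda\vDash\,$\emph{$\kappa$ is supercompact} via the measures derived from the iterates $j^n$ and reflect that first-order statement of $V_\lambda$ directly --- a slightly more self-contained variant whose reflection step needs only $V_\lambda$-elementarity rather than a statement about elements of $V_{\lambda+1}$.
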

\begin{proof}
 It is by reflection of rank-into-rank embeddings: for any $\gamma<\kappa$, the sentence $''\exists k:V_\lambda\prec V_\lambda,j(\gamma)<\crt(k)<j(\kappa)``$ is true, witnessed by $j\upharpoonright V_\lambda$ (note that $j(\gamma)=\gamma$). Then, by elementarity, there exists $k:V_\lambda\prec V_\lambda$ with critical point between $\gamma$ and $\kappa$. Such critical point is supercompact in $V_\lambda$, and choosing different $\gamma$'s we have that the cardinals supercompact in $V_\lambda$ form an unbounded subset of $\kappa$. By elementarity, this is true also for $\kappa_1$.
\end{proof}

Let $\mu_0=\kappa$ and $\mu_{i+1}$ the smallest cardinal supercompact in $V_\lambda$ larger than $\mu_i$, and let $\nu=\sup_{i\in\omega}\mu_i$. By the lemma above, $\nu<\kappa_1$.

Suppose GCH. 
\begin{prop}[Shi \cite{Shi}]
 There is a generic extension of $V$ such that $I0(\kappa,\lambda)$ holds and if $\mu$ is a cardinal supercompact in $V_\lambda$, $\mu$ is indestructible by $\mu$-directed closed forcing. 
\end{prop}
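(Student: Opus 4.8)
The plan is to build $\mathbb{P}_\lambda$ as a reverse Easton iteration that performs a Laver-style preparation at every cardinal that $V_\lambda$ believes to be supercompact, arranged so that all three structural hypotheses of Definition \ref{Definition} — directed closed, $\lambda$-bounded, and $j$-coherent — hold. Once this is in place, the $I0$-preservation already isolated in the proof of Theorem \ref{general} (the combination of Lemma 3.6 and Lemma 3.7 of \cite{DimFried}) guarantees that $I0(\kappa,\lambda)$ survives the forcing, and a standard lifting argument carried out inside $V_\lambda$ delivers the indestructibility.

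First I would fix, using GCH and the resulting definable well-ordering of $V_\lambda$, a canonical guessing function $\ell:\lambda\to V_\lambda$ that is definable over $V_\lambda$ without parameters. At each $\mu$ that is supercompact in $V_\lambda$ (there are unboundedly many below $\kappa$ by Lemma \ref{limit}, hence class-many in $V_\lambda$), the stage forcing $\mathbb{Q}_\mu$ is the lottery-preparation poset associated to $\ell\upharpoonright\mu$, and at all other stages the forcing is trivial. Each $\mathbb{Q}_\mu$ is $<\mu$-directed closed and lies in $V_\lambda$, so $\mathbb{P}_\lambda$ is directed closed and, since $\lambda$ is a strong limit, $\lambda$-bounded. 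For $j$-coherence I would use that $j\upharpoonright V_\lambda$ is an elementary embedding $V_\lambda\prec V_\lambda$ (indeed $j\upharpoonright V_{\lambda+1}$ is the $I1$-embedding of Remark \ref{I1}): since the map $\delta\mapsto\mathbb{P}_\delta$ is definable over $V_\lambda$ from $\ell$ by a formula fixed by $j$, elementarity yields $j(\ell\upharpoonright\delta)=\ell\upharpoonright j(\delta)$ and therefore $j(\mathbb{P}_\delta)=\mathbb{P}_{j(\delta)}$ for every $\delta<\lambda$.

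With the hypotheses of Definition \ref{Definition} verified, I would invoke the $I0$-preservation part of the machinery behind Theorem \ref{general}: for any $G$ generic for $\mathbb{P}_\lambda$, $V[G]\vDash I0(\kappa,\lambda)$, witnessed by a lift of $j$. It then remains to check indestructibility in $(V_\lambda)^{V[G]}$. Given $\mu$ supercompact there and $\mathbb{R}$ a $\mu$-directed closed poset, I would take a supercompactness embedding with critical point $\mu$, use the lottery structure of $\mathbb{Q}_\mu$ to capture $\mathbb{R}$, and lift the embedding successively through $\mathbb{P}_\mu$, through $\mathbb{R}$ (building a master condition from the $\mu$-directed closure), and through the tail, exactly as in the classical Laver argument but localized to $V_\lambda$.

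The hard part will be reconciling the two competing demands on $\ell$: it must be canonical enough (definable over $V_\lambda$) to force $j$-coherence, yet generic enough to anticipate an arbitrary $\mu$-directed closed $\mathbb{R}$ for \emph{every} $V_\lambda$-supercompact $\mu$ simultaneously. This is precisely what the lottery sum is designed to resolve — it is definable from the well-order and automatically ranges over all admissible posets — so the delicate step is to confirm that the lottery preparation, rather than a hand-picked Laver function, still lifts the supercompactness embeddings while leaving the iteration $j$-coherent, so that both $I0(\kappa,\lambda)$ and the indestructibility hold in the same extension.
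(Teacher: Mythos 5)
The paper does not prove this proposition; it is imported wholesale from Shi \cite{Shi}, so your sketch has to stand on its own. Its overall architecture is the right one and is in the spirit of how such results are obtained: a canonically definable preparation verifying the three clauses of Definition \ref{Definition}, preservation of $I0(\kappa,\lambda)$ via the Lemma 3.6/3.7 machinery of \cite{DimFried} quoted after Theorem \ref{general}, and a Laver--Hamkins lifting localized to $V_\lambda$ for the indestructibility.

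Two steps, however, do not work as written. First, \GCH{} does not yield a definable well-ordering of $V_\lambda$, so the guessing function $\ell$ you build from it need not exist, and with it goes your only argument for $j$-coherence. The repair is the one your last paragraph already gestures at: drop $\ell$ entirely and use the parameter-free lottery preparation relative to a genuinely definable Menas function, e.g.\ $f(\alpha)=$ the least $\beta<\lambda$ such that $\alpha$ is not $\beta$-supercompact in $V_\lambda$; a lottery sum requires no selection and hence no well-order, and $j$-coherence then follows from definability over $V_\lambda$ alone. Second, and more seriously, putting the nontrivial stages exactly at the $V_\lambda$-supercompact cardinals breaks the lifting: when you pass to a $\gamma$-supercompactness embedding $\pi$ with critical point $\mu$ (for some $\gamma<\lambda$), the target model $M$ cannot be relied upon to see $\mu$ as $V_{\pi(\lambda)}$-supercompact, so stage $\mu$ of $\pi(\mathbb{P}_\lambda)$ may well be trivial in $M$ and there is then no lottery in which to opt for $\mathbb{R}$. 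This is precisely why Hamkins' preparation takes its nontrivial stages to be the closure points of $f$ --- a condition $M$ does verify at $\mu$, with $\pi(f)(\mu)$ large enough (by the Menas property) to place $\mathbb{R}$ inside the stage-$\mu$ lottery. With these two corrections the stage posets are still $<\delta$-directed closed, of size $<\lambda$, and coherently definable, and the remainder of your outline (master condition through $\mathbb{R}$, closure through the tail, one embedding for each $\gamma<\lambda$, and the DimFried lemmas for $I0$) goes through; you should also note that the indestructibility is proved for the cardinals that were prepared, i.e.\ those $V_\lambda$-supercompact in the ground model, which is all the paper needs for the $\mu_i$.
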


So, we can suppose that all the $\mu_i$ are closed under $\mu_i$-directed closed forcing.

Let $\mathbb{A}_\lambda$ the reverse Easton iteration that is not trivial only on $V_\lambda$-supercompact cardinals limits of $V_\lambda$-supercompact cardinals, and if $\eta$ is such a cardinal, $\mathbb{Q}_\eta$ is the forcing that adds $\nu(\eta)^{++}$ subsets to $\eta$, with conditions of size $<\eta$, where $\nu(\eta)$ is the sup of the $\omega$ $V_\lambda$-supercompact cardinals above $\eta$. As noted before, if $\eta<\lambda$ then $\nu(\eta)<\lambda$.
\begin{itemize}
 \item $\mathbb{A}_\lambda$ is directed closed, because each $\mathbb{Q}_\eta$ is $<\eta$-directed closed;
 \item $\mathbb{A}_\lambda$ is $\lambda$-bounded, because for each $\eta$, $|\mathbb{Q}_\eta|=\nu(\eta)^\eta<\lambda$, and $\lambda$ is strong limit;
 \item $\mathbb{A}_\lambda$ is $j$-coherent, because its definition depends only on $\lambda$.
\end{itemize}

Let $E$ be generic for $\mathbb{A}_\lambda$. Then in $V[E]$:
\begin{itemize}
 \item by Theorem \ref{general}, $I0(\kappa,\lambda)$ holds;
 \item by indestructibility, $\kappa$ is $V_\lambda$-supercompact, and since the forcing is trivial from $\kappa+1$ to $\nu$, and closed enough, $2^\kappa=\nu^{++}$. 
 \item by Gitik-Sharon \cite{GitikSharon}, there exists a $\nu^+$ supercompactness measure on $\kappa$.
\end{itemize}

We say that $\pi$ is a $\nu^+$ supercompactness measure on $\kappa$ if $\pi:V[E]\prec M$, $\crt(\pi)=\kappa$ and $M\upharpoonright\pi(\kappa)=\{\pi(f)(\kappa):f:{\cal P}_{\kappa}(\nu^+)\to\kappa\}$.

Let $U$ be the $\nu^+$ supercompactness measure on $\kappa$, and $U_n$ the $\mu_n$ supercompactness measure on $\kappa$ induced by $U$, i.e., $X\in U_n$ iff $\pi"'\mu_n\in\pi(X)$.

Now the definition of the forcing is the same as \ref{diagonalprikry}, with $\mu_n$ instead of $\kappa^{+n}$:

Let $a,b\in{\cal P}_\kappa(\mu_n)$ and $b\cap\kappa\in\kappa$. Set
 \begin{equation*}
  a\underset{\sim}{\subset}b \leftrightarrow a\subseteq b \wedge \ot(a)<b\cap\kappa.
 \end{equation*}
 
 \begin{defin}
  $p\in\mathbb{Q}$ iff $p=\langle a_0^p,a_1^p,\dots,a_{n-1}^p,X_n^p,X_{n+1}^p,\dots\rangle$ where
  \begin{enumerate}
   \item $\forall l<n\ a_l^p\in{\cal P}_\kappa(\mu_n)$ and $a_l^p\cap\kappa$ is an inaccessible cardinal;
   \item $\forall m\geq n$, $X_m^p\in U_m$;
   \item $\forall m\geq n\ \forall b\in X_m^p\ \forall l<n\ a_l^p\underset{\sim}{\subset} b$;
   \item $\forall i<j<l$ $a_i^p\underset{\sim}{\subset} a_j^p$.
  \end{enumerate}
 \end{defin}

 For $p=\langle a_0^p,a_1^p,\dots,a_{n-1}^p,X_n^p,X_{n+1}^p,\dots\rangle$, let us denote $n$ as $l(p)$. Moreover, for any collection of $A_i\subseteq{\cal P}_\kappa(\kappa^{+i})$, let
 \begin{equation*}
  \tilde{\Pi}_{n\in m}A_n=\{\langle a_0,\dots, a_{m-1}\rangle:\forall i<j<m\ a_i\in A_i\wedge a_i \underset{\sim}{\subset}a_j\}.
 \end{equation*}
 
 For any collection of $A_a$, $a\in{\cal P}_\kappa(\mu_n)$, let 
 \begin{equation*}
  \Delta A_a=\{b\in{\cal P}_\kappa(\mu_n):\forall a\in{\cal P}_\kappa(\mu_n)\ a\underset{\sim}{\subset} b\rightarrow b\in X_a\}. 
 \end{equation*}
 It is a standard result that if each $A_a\in U_n$, then $\Delta A_a\in U_n$.
 
 \begin{defin}
  Let $p,q\in\mathbb{Q}$. Then $p\leq^*q$ iff
  \begin{enumerate}
   \item $l(p)=l(q)$;
   \item $\forall l<l(p)\ a_l^p=a_l^q$;
   \item $\forall m\geq l(p)\ X_m^p\subseteq X_m^q$.
  \end{enumerate}
 \end{defin}

Note that if $G$ is generic for $\mathbb{Q}$ as above as defined in $V[E]$, $(2^{\kappa})^{V[E][G]}=(\nu^{++})$. As $\nu$ is collapsed to $\kappa$, and no other cardinal is collapsed, $V[E][G]\vDash 2^\kappa=\kappa^{++}$

\begin{teo}[Neeman]
 If $G$ is generic for $\mathbb{Q}$ as above as defined in $V[E]$, then $TP(\kappa^+)$.
\end{teo}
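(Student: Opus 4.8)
The statement is exactly Neeman's tree property theorem, so the task splits into checking that $V[E]$ meets the hypotheses of his argument (which is what the preceding bullet list and the preparation $\mathbb{A}_\lambda$ arrange) and then running that argument; I sketch the latter. Fix, toward a contradiction, a tree $T\in V[E][G]$ of height $\kappa^+$ all of whose levels have size $\leq\kappa$, and suppose $T$ has no cofinal branch. Recall that $\mathbb{Q}$ collapses $\nu$ to $\kappa$ and preserves all other cardinals, so $\kappa^+=(\nu^+)^{V[E]}$, the tree genuinely has height $\kappa^+$, and $|T|=\kappa^+=\nu^+$. The plan is the standard ``branch through the image tree'' argument, which is the reason the construction secures a $\nu^+$-supercompactness measure $U$ on $\kappa$ in $V[E]$: let $\pi:V[E]\prec M$ be its embedding, so $\crt(\pi)=\kappa$ and ${}^{\nu^+}M\subseteq M$. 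I will lift $\pi$ through the forcing, read off a cofinal branch of $T$ in the lifted target, and then argue that this branch already lies in $V[E][G]$.

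First I would lift $\pi$ to $\pi:V[E][G]\prec M[\pi(G)]$. This is where the \emph{diagonal} design of $\mathbb{Q}$ is essential: the direct extension order $\leq^*$ is $\kappa$-closed, the forcing has the Prikry property (the Proposition above), and the generic sequence $\langle a_n:n\in\omega\rangle$ added by $G$ amalgamates into a single master condition of $\pi(\mathbb{Q})$ whose stem captures the images $\langle\pi(a_n):n\in\omega\rangle$; one then builds $\pi(G)\supseteq\pi''G$ generic over $M$ below this condition, using the closure of $\leq^*$ together with the correctness of $M$ about $\pi(\mathbb{Q})$ guaranteed by the indestructibility of the $\mu_i$ and the shape of $\mathbb{A}_\lambda$.

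With the lift in hand, $\pi(T)$ is a tree of height $\pi(\kappa^+)$ in $M[\pi(G)]$. Since $\kappa^+$ is regular and $\pi(\kappa^+)$ is regular in $M$ and above $\kappa^+$, we have $\delta:=\sup\pi''\kappa^+<\pi(\kappa^+)$, so there is a node $d\in\Lev_\delta(\pi(T))$. Put $b=\{t\in T:\pi(t)<_{\pi(T)}d\}$. For each $\alpha<\kappa^+$ the level $\Lev_\alpha(T)$ has size $\leq\kappa<\crt(\pi)$, so $\Lev_{\pi(\alpha)}(\pi(T))=\pi''\Lev_\alpha(T)$ and the predecessor of $d$ at level $\pi(\alpha)$ is $\pi(t)$ for a unique $t\in\Lev_\alpha(T)$; hence $b$ meets every level and is a cofinal branch. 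Moreover $\pi\upharpoonright T=\langle(t,\pi(t)):t\in T\rangle$ has size $\kappa^+=\nu^+$, so by ${}^{\nu^+}M\subseteq M$ it lies in $M$, and therefore $b\in M[\pi(G)]$.

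The crux, and the step I expect to be the main obstacle, is to push $b$ back down to $V[E][G]$: one must show that the quotient forcing $\mathbb{R}$ taking $V[E][G]$ to $M[\pi(G)]$ adds no new cofinal branch to a tree of height $\kappa^+$ with levels of size $\leq\kappa$. Two classical branch lemmas are available: a forcing whose square has the $\kappa^+$-chain condition adds no such branch (two incompatible conditions forcing distinct branches would yield a $\kappa^+$-sized antichain in the square, read off the $\kappa^+$ splitting levels), and a sufficiently closed forcing adds no such branch. The real work is to produce a factorization $\mathbb{R}\cong\mathbb{R}_0\times\mathbb{R}_1$ with $\mathbb{R}_0$ falling under the closure lemma and $\mathbb{R}_1$ under the square-chain-condition lemma, the estimate on $\mathbb{R}_1\times\mathbb{R}_1$ being the delicate point. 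It is precisely to make such a factorization available --- to secure the $\kappa^+$-cc of the relevant square together with enough closure past the Prikry stem --- that Neeman's preparation $\mathbb{A}_\lambda$ adds exactly $\nu(\eta)^{++}$ subsets at each $V_\lambda$-supercompact $\eta$ and makes the $\mu_i$ indestructibly supercompact. Once $\mathbb{R}$ is shown to add no branch, $b\in V[E][G]$, contradicting the assumed Aronszajn-ness of $T$; hence $TP(\kappa^+)$ holds.
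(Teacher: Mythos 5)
The paper does not actually prove this statement: it is imported verbatim from \cite{Neeman} and used as a black box, so the only thing to compare your sketch against is Neeman's own argument, and your sketch does not reproduce it. The most concrete error is in the reflection step: you write that each level of $T$ has size $\leq\kappa<\crt(\pi)$, but $\crt(\pi)=\kappa$, so levels have size \emph{equal to} the critical point. Consequently $\pi(\Lev_\alpha(T))$ is in general strictly larger than $\pi''\Lev_\alpha(T)$, the predecessor of $d$ at level $\pi(\alpha)$ need not lie in the range of $\pi$, and the set $b=\{t\in T:\pi(t)<_{\pi(T)}d\}$ need not meet every level. This is not a fixable slip: it is \emph{the} central obstruction to proving the tree property at the successor of a singular (equivalently, at the successor of the critical point), and it is precisely what Neeman's machinery is built to circumvent. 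His proof does not lift $\pi$ and pull back a single branch; it stays in $V[E]$, applies the ground-model embedding to a \emph{name} $\dot T$, and extracts from level $\sup\pi''\nu^+$ of $\pi(\dot T)$ a whole \emph{system} of partial branches indexed by pairs (condition/stem, node). The combinatorial core is a key lemma, using the full $\omega$-sequence of supercompactness measures $U_n$, showing that some branch of this narrow system is cofinal and is realized in $V[E][G]$.

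Two further steps of your outline would also fail as stated. First, the lift $\pi:V[E][G]\prec M[\pi(G)]$: $M$ is closed only under $\nu^+$-sequences, while $\pi(\mathbb{Q})$ has far more than $\nu^+$ many dense subsets in $M$, so neither a master-condition argument nor the $\pi(\kappa)$-closure of the direct extension order (which holds only inside $M$) lets you build an $M$-generic filter in $V[E][G]$ or in any extension you control; and $M[\pi(G)]$ would in any case be an inner model of some further extension, not a forcing extension of $V[E][G]$, so there is no ``quotient $\mathbb{R}$ taking $V[E][G]$ to $M[\pi(G)]$''. Second, the proposed factorization of that quotient into a sufficiently closed part times a part whose square is $\kappa^+$-cc is the Mitchell-style mechanism for the tree property at double successors (it is close to what happens in Section 6 of this paper, for $TP(\kappa^{++})$ via Sacks iterations); no such factorization is available for the diagonal supercompact Prikry forcing, and Neeman's preparation $\mathbb{A}_\lambda$ is there to arrange the cardinal arithmetic $2^\kappa=\nu^{++}$ and the supply of supercompactness measures, not to enable a branch-preservation factorization. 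So the proposal identifies the right large-cardinal input but substitutes the wrong proof architecture for the two steps that carry all the difficulty.
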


\begin{lem}
 $\mathbb{Q}$ as above is $\kappa$-geometric.
\end{lem}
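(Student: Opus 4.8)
The plan is to establish the two clauses of Definition~\ref{geometric} for the Neeman variant of the diagonal supercompact Prikry forcing, exactly as was done for the Gitik--Sharon forcing in the previous Proposition. The crucial observation is that the only structural difference between this $\mathbb{Q}$ and the one of Definition~\ref{diagonalprikry} is cosmetic: the index sets $\mathcal{P}_\kappa(\kappa^{+n})$ have been replaced by $\mathcal{P}_\kappa(\mu_n)$, where the $\mu_n$ are the first $\omega$ cardinals supercompact in $V_\lambda$ above $\kappa$, and the measures $U_n$ are now the projections of a $\nu^+$-supercompactness measure rather than of a $\kappa^{+\omega+2}$-supercompactness measure. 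Since all the combinatorial bookkeeping (the relation $\underset{\sim}{\subset}$, the diagonal intersection $\Delta A_a$, the definition of $p\oplus\vec{a}$ and of $\leq,\leq^*$) is formally identical, the length measure $l(p)$ is already in place and visibly satisfies $l(1_\mathbb{Q})=0$ and monotonicity under $\leq$, so the first bullet of Definition~\ref{geometric} is immediate.

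For the second bullet I would reduce everything to the $*$-Prikry condition, mirroring the proof of Lemma~\ref{Prikry}. First I would verify that $\leq^*$ is $<\kappa$-directed closed (indeed $\kappa$-complete): given fewer than $\kappa$ direct extensions of a fixed condition, one forms the coordinatewise intersection of the tree-parts $X_m$, which lands back in $U_m$ by the $\kappa$-completeness of each $U_m$ (these are fine normal measures on $\mathcal{P}_\kappa(\mu_n)$, hence $\kappa$-complete). Next I would invoke the $*$-Prikry property for this forcing, which asserts that for every $p$ and every dense $D$ there is a direct extension $q\leq^* p$ and an $n$ such that every $n$-step extension $q\oplus\vec a$ lies in $D$; this is the standard diagonal-intersection argument for supercompact Prikry forcings and may be quoted as ``well known'' in the same spirit as the previous Proposition.

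Granting these two ingredients, the argument is then the iteration of Lemma~\ref{Prikry}: given $\langle D_\beta:\beta<\alpha\rangle$ with $\alpha<\kappa$ and a starting condition $p$, I build a $\leq^*$-decreasing sequence $\langle q_\beta:\beta\leq\alpha\rangle$ by applying the $*$-Prikry condition at successor stages (obtaining $q_{\beta+1}\leq^* q_\beta$ and $n_{\beta+1}$ so that every $n_{\beta+1}$-extension of $q_{\beta+1}$ is in $D_\beta$) and taking $\leq^*$-greatest lower bounds at limit stages via the directed closure just established. Then $q_\alpha$ is the desired condition: any $q'\leq q_\alpha$ meeting $E_{n_\beta}$ has length at least $l(q_\alpha)+n_\beta$, hence refines some $n_\beta$-extension $q_\alpha\oplus\vec a$, which by $\leq^*$-monotonicity lies below the corresponding extension of $q_\beta$ and is therefore in $D_\beta$, so $q'\in D_\beta$ by openness.

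The one point requiring genuine care---the analogue of the obstacle that forced the explicit Claim in the extender-based case---is whether an $n_\beta$-step extension of $q_\alpha$ really projects down to an $n_\beta$-step extension of the earlier $q_\beta$ that was arranged to meet $D_\beta$. Here, however, the limit-stage conditions were formed purely by shrinking the measure-one sets $X_m$ coordinatewise while \emph{keeping the stems $\langle a_0,\dots,a_{l(p)-1}\rangle$ fixed}, so there is no change of ``$\mc$''-coordinate and no nontrivial projection to track, unlike the extender-based forcing. Consequently an $n_\beta$-extension of $q_\alpha$ is literally an $n_\beta$-extension of $q_\beta$ with smaller tree-parts, hence still in the open set $D_\beta$, and the verification collapses to the clean situation of Lemma~\ref{Prikry}. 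I would therefore expect the proof to be short, with the only substantive content being the quotation of the $*$-Prikry condition and the completeness of $\leq^*$.
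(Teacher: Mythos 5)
Your proposal is correct and follows exactly the route the paper intends: the paper's own proof is the single line ``the proof is exactly the same as in the Gitik--Sharon case,'' which in turn reduces to the well-known $*$-Prikry condition plus $\kappa$-completeness of $\leq^*$ and the iteration scheme of Lemma~\ref{Prikry}. Your additional observation that no projection bookkeeping is needed (since direct extensions fix the stems and only shrink the measure-one sets, unlike the extender-based case) is precisely why the paper can afford to be this terse.
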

\begin{proof}
 The proof is exactly the same as in \ref{diagonal}
\end{proof}

 \begin{cor}
 \label{TP+}
  Suppose $I0(\kappa,\lambda)$. Then there exists a generic extension in which $I1(\kappa)+2^\kappa>\kappa^+ +TP(\kappa^+)$ holds.
 \end{cor}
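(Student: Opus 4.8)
The plan is to apply Theorem~\ref{general} with the specific forcings set up immediately above this corollary, exactly as was done for Corollary~\ref{AP}. The general procedure reduces the entire statement to verifying three things: that the preparation iteration $\mathbb{A}_\lambda$ satisfies the hypotheses of Theorem~\ref{general} (directed closed, $\lambda$-bounded, $j$-coherent reverse Easton), that the Neeman-style diagonal Prikry forcing $\mathbb{Q}$ is $\kappa$-geometric and lives in $(V_\lambda)^{V^{\mathbb{A}_\lambda}}$, and that forcing with $\mathbb{Q}$ delivers the desired combinatorics ($2^\kappa>\kappa^+$ and $TP(\kappa^+)$). The first point has already been checked in the bulleted discussion of $\mathbb{A}_\lambda$, and the $\kappa$-geometricness of $\mathbb{Q}$ is the preceding lemma; the combinatorial payoff is Neeman's theorem together with the cardinal-arithmetic computation that $\nu$ is collapsed to $\kappa$ so that $2^\kappa=\kappa^{++}$ in $V[E][G]$.

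Concretely, first I would start from $I0(\kappa,\lambda)$ and pass to the generic extension $V[E]$ by $\mathbb{A}_\lambda$, invoking Theorem~\ref{general} (applied without the Prikry part, as in Corollary~\ref{AP}) to conclude that $I0(\kappa,\lambda)$ persists, witnessed by some $k$. I would then note, using indestructibility and the bulleted facts established above, that in $V[E]$ the cardinal $\kappa$ carries a $\nu^+$ supercompactness measure $U$, so that the Neeman forcing $\mathbb{Q}$ is definable there and $\mathbb{Q}\subseteq\Pi_{n\in\omega}{\cal P}_\kappa(\mu_n)$, whence $\mathbb{Q}\subseteq V_{\kappa_1}$, i.e. $\mathbb{Q}\in(V_\lambda)^{V[E]}$. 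Since $\mathbb{Q}$ adds a Prikry sequence to $\kappa$ and is $\kappa$-geometric by the lemma above, all hypotheses of Theorem~\ref{general} are met, and so there are generics yielding a model where $\exists j:V_{\kappa+1}\prec V_{\kappa+1}$, that is, $I1(\kappa)$.

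It remains to check that the combinatorial conclusions survive in the final model. By Neeman's theorem, forcing with $\mathbb{Q}$ over $V[E]$ gives $TP(\kappa^+)$, and by the cardinal-arithmetic remark $(2^\kappa)^{V[E][G]}=\nu^{++}$, which becomes $\kappa^{++}$ once $\nu$ is collapsed to $\kappa$ and no further cardinals are collapsed; thus $2^\kappa=\kappa^{++}>\kappa^+$. Since $I1(\kappa)$ is a statement about $V_{\kappa+1}$ and both $2^\kappa>\kappa^+$ and $TP(\kappa^+)$ concern $\kappa$ and $\kappa^+$, all three hold simultaneously in the model produced by Theorem~\ref{general}.

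I expect the main obstacle to be purely bookkeeping rather than conceptual: one must make sure that the forcing $\mathbb{Q}$ actually used in Theorem~\ref{general} is the one defined over $V[E]$ from the $\nu^+$ supercompactness measure, and that its cardinal-collapsing behaviour is exactly as Neeman computes, so that the powerset of $\kappa$ and the tree property both read off correctly in $V[E][G]$. In other words, the real work is confirming that the hypotheses of Theorem~\ref{general} and of Neeman's theorem refer to the same forcing and the same ground model $V[E]$, after which the transfer of combinatorics from $\lambda$ back to $\kappa$ is automatic via the general procedure.
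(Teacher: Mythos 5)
Your proposal is correct and follows essentially the same route as the paper: verify that the preparation iterations are reverse Easton with the required properties, note that $\mathbb{Q}$ lies in $V_{\kappa_1}$ (hence in $V_\lambda$ of the prepared model) and is $\kappa$-geometric, apply Theorem~\ref{general}, and read off $TP(\kappa^+)$ from Neeman's theorem together with the computation $2^\kappa=\nu^{++}=\kappa^{++}$. The paper's own proof is just a condensed version of the same argument, additionally emphasizing that all three preparation forcings (GCH, indestructibility, and blowing up the power of $\kappa$) are of the required reverse Easton form.
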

 \begin{proof}
  The remarks above show that the hypotheses for Theorem \ref{general} are satisfied: there are three preparation forcing (one for GCH, one for the indesctructibility of supercompactness, and one for blowing up the power of $\kappa$) and they are all reverse Easton iterations with the properties needed. The forcing $\mathbb{Q}$ is a subset of $\Pi_{n\in\omega}{\cal P}_\kappa(\mu_n)$, therefore in $V[E]_{\kappa_1}$ by Lemma \ref{limit}, and it is $\kappa$-geometric. So in $V[E][H]$ $I1(\kappa)$ holds, but also (see above) $2^\kappa=\kappa^{++}$ and $TP(\kappa^+)$
 \end{proof}

\section{Tree property at the double successor}
 Some results can be achieved using the general procedure without much further effort. This is the case for the forcing introduced by Dobrinen and Friedman in \cite{DobFried}, to prove the tree property at a double successor of a singular cardinal.

  Note that for $TP(\kappa^{++})$ to hold, it must be that $2^\kappa>\kappa^{++}$, so it is natural to ask whether $I1(\kappa)$ holds at the same time. This is another property that is implied by $\Box_\kappa$.
 
  \begin{defin}
  For any $\kappa$ inaccessible, the forcing $\Sacks(\kappa)$ is the set of subsets of $2^{<\kappa}$ such that:
  \begin{itemize}
   \item $s\in p$, $t\subseteq s\rightarrow t\in p$;
   \item each $s\in p$ has a proper extension in $p$;
   \item for any $\alpha<\kappa$, if $\langle s_\beta:\beta<\alpha\rangle$ is a $\subseteq$-increasing sequence of elements of $p$, then $\bigcup_{\beta<\alpha}s_\beta\in p$;
   \item there exists a club $C(p)$ such that $\{s\in p: s^\frown 0\in p\ \wedge\ s^\frown 1\in p\}=\{s\in p:\lh(s)\in C(p)\}$.
  \end{itemize}
  Extension is simply the inclusion.
 \end{defin}

 \begin{defin}
  For any $\kappa$ inaccessible and $\gamma(\kappa)$ the first weakly compact above it, $\Sacks^+(\kappa)$ is the $\gamma(\kappa)$ iteration of $\Sacks(\kappa)$ with supports of size $\leq\kappa$.
 \end{defin}
 
 Let $\mathbb{P}$ be the reverse Easton forcing of length $\lambda$ such that $\mathbb{P_\alpha}=\Sacks^+(\alpha)$. This forcing is clearly $\lambda$-bounded and $j$-coherent. Fact 2.7 in \cite{DobFried} states that it is also closed. Moreover, Theorem 3.2 in the same paper shows that in the extension the Tree Property holds in the extension for $\alpha^{++}$, for any $\alpha$ inaccessible. 
 
 \begin{cor}
 \label{TP}
  Suppose $I0(\kappa,\lambda)$. There exists a generic extension of $V$ such that $I1(\kappa)$+$TP(\kappa^{++})$ holds.
 \end{cor}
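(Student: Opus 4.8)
The plan is to apply Theorem \ref{general} directly, in the same spirit as the previous corollaries (\ref{nonsch}, \ref{AP}, \ref{TP+}), with the Dobrinen--Friedman Sacks iteration playing the role of the preparation forcing. First I would observe that the hypotheses of Theorem \ref{general} require a directed closed, $\lambda$-bounded, $j$-coherent reverse Easton iteration $\mathbb{P}$ together with a $\kappa$-geometric forcing $\mathbb{Q}$ adding a Prikry sequence to $\kappa$. The text immediately preceding this corollary already verifies that $\mathbb{P}$, the reverse Easton iteration with $\mathbb{P}_\alpha=\Sacks^+(\alpha)$, is $\lambda$-bounded and $j$-coherent, and it cites Fact 2.7 of \cite{DobFried} for closure; so the preparation half of the hypothesis is essentially in hand. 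The one point to be careful about is that Theorem \ref{general} asks for \emph{directed} closure, so I would make sure the cited closure of $\Sacks^+(\alpha)$ (and hence of the iteration) is genuinely directed closure below each stage, which is what the Sacks-at-inaccessibles construction delivers since fusion arguments give $<\alpha$-closure and the product/iteration structure with supports of size $\leq\kappa$ respects directedness.

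Second, I would supply the Prikry forcing $\mathbb{Q}$. Since $I0(\kappa,\lambda)$ implies $\kappa$ is measurable (indeed far more), and since after forcing with $\mathbb{P}$ the embedding witnessing $I0(\kappa,\lambda)$ survives by Theorem \ref{general} (applied without the Prikry part, exactly as in the proof of Corollary \ref{AP}), $\kappa$ remains measurable in $V[G]$. I would then let $\mathbb{Q}$ be ordinary Prikry forcing on $\kappa$ using a normal measure in $V[G]$; by Lemma \ref{Prikry} this is $\kappa$-geometric, it adds a Prikry sequence to $\kappa$, and as a set of pairs $(s,A)$ with $s\in[\kappa]^{<\omega}$ and $A\subseteq\kappa$ it lies in $V_{\kappa+2}\subseteq(V_\lambda)^{V[G]}$ as required. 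With both halves in place, Theorem \ref{general} yields $G$ and $H$ such that $V[G][H]\vDash I1(\kappa)$.

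Third, I would verify that $TP(\kappa^{++})$ holds in the same model. Here I would invoke Theorem 3.2 of \cite{DobFried}, which guarantees that after the $\Sacks^+$ reverse Easton iteration the tree property holds at $\alpha^{++}$ for every inaccessible $\alpha$, in particular at $\kappa^{++}$ in $V[G]$. The delicate step is that I then add a Prikry sequence with $\mathbb{Q}$, and I must argue that this preserves $TP(\kappa^{++})$: Prikry forcing on $\kappa$ is $\kappa^+$-cc and adds no bounded subsets of $\kappa$, so it adds no new subsets of $\kappa^{++}$ that would create a branchless Aronszajn tree, and in particular it cannot collapse $\kappa^{++}$ nor introduce a new $\kappa^{++}$-Aronszajn tree while destroying branches. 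I expect this preservation argument to be the main obstacle, since one must check both that no new tree appears and that cofinal branches of existing trees are not lost under $\kappa^+$-cc forcing of size continuum-at-$\kappa$; the cleanest route is to note that $\mathbb{Q}$ has a dense $\kappa^+$-cc subset and that $\kappa^{++}$-trees with small levels are coded by subsets of $\kappa^{++}$ that are already decided below, so $TP(\kappa^{++})$ transfers to $V[G][H]$.

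Finally, I would assemble the pieces: $V[G][H]\vDash I1(\kappa)$ by Theorem \ref{general}, and $V[G][H]\vDash TP(\kappa^{++})$ by the Dobrinen--Friedman result together with the preservation under Prikry forcing, giving the conjunction claimed in the corollary.
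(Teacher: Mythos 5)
Your overall route is the same as the paper's: apply Theorem \ref{general} with the Dobrinen--Friedman reverse Easton iteration of $\Sacks^+$ as the preparation forcing and ordinary Prikry forcing on $\kappa$ as the $\kappa$-geometric forcing $\mathbb{Q}$, then quote Theorem 3.2 of \cite{DobFried} for the tree property at $\kappa^{++}$ in the intermediate model. Up to and including the $I1(\kappa)$ part, your argument matches the paper.

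The genuine gap is in your third step, the preservation of $TP(\kappa^{++})$ when you pass from $V[G]$ to $V[G][H]$. You correctly identify this as the main obstacle, but the argument you sketch does not work. First, the claim that Prikry forcing ``adds no new subsets of $\kappa^{++}$'' is false: the Prikry sequence itself is a new unbounded subset of $\kappa$, hence codes a new subset of $\kappa^{++}$, so new $\kappa^{++}$-trees with small levels certainly can appear in $V[G][H]$. Second, ``branches are not lost'' is a red herring --- a cofinal branch existing in $V[G]$ trivially survives to $V[G][H]$; the only issue is whether a \emph{new} $\kappa^{++}$-Aronszajn tree is added. Third, $\kappa^+$-cc alone cannot settle this: chain conditions in general do not preserve tree properties (ccc forcing can add a Suslin tree), and here $|\mathbb{Q}|=2^\kappa\geq\kappa^{++}$, so no ``small forcing'' argument applies either. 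The preservation of the tree property at $\kappa^{++}$ under Prikry forcing at $\kappa$ is a nontrivial theorem whose proof uses the Prikry property and a name analysis specific to this forcing; the paper handles this step by citing Theorem 2 of \cite{HalFried}, and some such external input (or a full reproduction of that argument) is needed to close your proof.
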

 \begin{proof}
  Let $\mathbb{P}$ be $\Sacks(\kappa)$ and $\mathbb{Q}$ the Prikry forcing on $\kappa$. By Theorem \ref{general} and the remarks above, it suffices to show that $\mathbb{Q}$ is $\kappa$-geometric and it does not kill the tree property in $\kappa^{++}$. The first is Lemma \ref{Prikry}, the second is Theorem 2 in \cite{HalFried}, and we're done.
 \end{proof}
 
 \section{Open questions}
  The general procedure introduced in Theorem \ref{general} has its own shortcomings. Among the many ''Prikry-like`` forcings, there are some that exploit the full supercompactness of one or many cardinal. A priori, this is not immediately useful: under $I0(\kappa,\lambda)$, $\kappa$ is just $\lambda$-supercompact. Also, in \cite{DimFried} there is a proof that it is consistent to have $\kappa$ less than the least supercompact, so it cannot be a consequence of I0. Thus, we can ask this:
  
  \begin{Q}
   Is I0 + $L(V_{\lambda+1})\vDash(\kappa$ is $\lambda^+$-supercompact) consistent? If so, what is its consistency strength?
  \end{Q}
  Note that we want $\kappa$ to be supercompact in $L(V_{\lambda+1})$ because the Prikry forcing in the general procedure must be in $L(V_{\lambda+1})$, and as $L(V_{\lambda+1})$ does not satisy \AC, but just $\DC_\lambda$, asking for more than $\lambda^+$-supercompactness can be improper.
  
  It is also possible that there are ways to make the general procedure, always or just in some cases, obsolete. For now, there is no proof that I0 is needed for the consistency of I1 and the combinatorial properties above. The usual large cardinals analysis, in fact, many times has results that have the same large cardinal consistency: this case is different because, while usually one starts with a model with a large cardinal, forces the combinatorial property and proves that the large cardinal is intact, in this case the forcing ''reflects`` the large cardinal to a cardinal that had already the property desired. So we can ask:
  
  \begin{Q}
   Is it possible to have the results in Corollaries \ref{nonsch}, \ref{AP}, \ref{TP+} and \ref{TP} with hypotheses weaker than I0? Or is it possible to have the consequences with hypotheses stronger than I1?
  \end{Q}
  
  With generic absoluteness, it is already possible to raise I1 to $j:L_\alpha(V_{\lambda+1})\prec L_\alpha(V_{\lambda+1})$, with $\alpha<\lambda$. An improvement of generic absolutness could improve also this, up to the so-called ''internal I0``, i.e., the existence for any $\alpha<\Theta$ of $j:L_\alpha(V_{\lambda+1})\prec L_\alpha(V_{\lambda+1})$, but for I0 a different approach could be needed.

\end{document}